\numberwithin{equation}{section}
\newtheorem{theoremcounter}{theoremcounter}[section]
\newtheorem{thmstarcounter}{thmstarcounter}
\newtheorem*{conjecturestar}{Conjecture}
\newtheorem{corollary}[theoremcounter]{Corollary}
\newtheorem{lemma}[theoremcounter]{Lemma}
\newtheorem{proposition}[theoremcounter]{Proposition}
\newtheorem{theorem}[theoremcounter]{Theorem}
\newtheorem{thmstar}[thmstarcounter]{Theorem}
\theoremstyle{definition}
\newtheorem{definition}[theoremcounter]{Definition}
\newtheorem{example}[theoremcounter]{Example}
\newtheorem{notation}[theoremcounter]{Notation}
\newtheorem*{questionstar}{Question}
\newtheorem{remark}[theoremcounter]{Remark}
\newcommand{\cA}{\ensuremath{\mathcal{A}}}
\newcommand{\cN}{\ensuremath{\mathcal{N}}}
\newcommand{\cO}{\ensuremath{\mathcal{O}}}
\newcommand{\cU}{\ensuremath{\mathcal{U}}}
\newcommand{\rD}{\ensuremath{\mathrm{D}}}
\newcommand{\rE}{\ensuremath{\mathrm{E}}}
\newcommand{\rG}{\ensuremath{\mathrm{G}}}
\newcommand{\rK}{\ensuremath{\mathrm{K}}}
\newcommand{\rL}{\ensuremath{\mathrm{L}}}
\newcommand{\rM}{\ensuremath{\mathrm{M}}}
\newcommand{\rR}{\ensuremath{\mathrm{R}}}
\newcommand{\rS}{\ensuremath{\mathrm{S}}}
\newcommand{\rV}{\ensuremath{\mathrm{V}}}
\newcommand{\rmd}{\ensuremath{\mathrm{d}}}
\newcommand{\rmm}{\ensuremath{\mathrm{m}}}
\newcommand{\veps}{\ensuremath{\varepsilon}}
\newcommand{\vphi}{\ensuremath{\varphi}}
\newcommand{\ol}{\overline}
\newcommand{\amid}{\ensuremath{\, | \,}}
\newcommand{\eqstop}{\ensuremath{\, \text{.}}}
\newcommand{\eqcomma}{\ensuremath{\, \text{,}}}
\newcommand{\NN}{\ensuremath{\mathbb{N}}}
\newcommand{\ZZ}{\ensuremath{\mathbb{Z}}}
\newcommand{\QQ}{\ensuremath{\mathbb{Q}}}
\newcommand{\RR}{\ensuremath{\mathbb{R}}}
\newcommand{\CC}{\ensuremath{\mathbb{C}}}
\newcommand{\ra}{\ensuremath{\rightarrow}}
\newcommand{\Aut}{\ensuremath{\mathrm{Aut}}}
\newcommand{\sslash}{\mathbin{/\mkern-6mu/}}
\newcommand{\Cstar}{\ensuremath{\text{C}^*}}
\newcommand{\Wstar}{\ensuremath{\text{W}^*}}
\newcommand{\bo}{\ensuremath{\mathcal{B}}}
\newcommand{\supp}{\ensuremath{\mathop{\mathrm{supp}}}}
\newcommand{\Cstarred}{\ensuremath{\Cstar_\mathrm{red}}}
\newcommand{\Cstarmax}{\ensuremath{\Cstar_\mathrm{max}}}
\newcommand{\contc}{\ensuremath{\mathrm{C}_\mathrm{c}}}
\newcommand{\Ltwo}{\ensuremath{{\offinterlineskip \mathrm{L} \hskip -0.3ex ^2}}}
\newcommand{\ltwo}{\ensuremath{\ell^2}}
\newcommand{\Ad}{\ensuremath{\mathop{\mathrm{Ad}}}}
\newcommand{\Out}{\ensuremath{\mathrm{Out}}}
\newcommand{\grpaction}[1]{\ensuremath{\stackrel{#1}{\curvearrowright}}}
\newcommand{\freegrp}[1]{\ensuremath{\mathbb{F}}_{#1}}
\begin{document}

\begin{center}
  \textbf{\LARGE \Cstar-simplicity of locally compact Powers groups}  \\[1em]
  Sven Raum
  \footnote{The research leading to these results has received funding from the People Programme (Marie Curie Actions) of the European Union's Seventh Framework Programme (FP7/2007-2013) under REA grant agreement n°[622322].}

\end{center}


\begin{abstract}
  In this article we initiate research on locally compact \Cstar-simple groups.  We first show that every \Cstar-simple group must be totally disconnected.  Then we study \Cstar-algebras and von Neumann algebras associated with certain groups acting on trees.  After formulating a locally compact analogue of Powers' property, we prove that the reduced group \Cstar-algebra of such groups is simple.  This is the first simplicity result for \Cstar-algebras of non-discrete groups and answers a question of de la Harpe.  We also consider group von Neumann algebras of certain non-discrete groups acting on trees.  We prove factoriality, determine their type and show non-amenability.  We end the article by giving natural examples of groups satisfying the hypotheses of our work.
\end{abstract}

\renewcommand{\thefootnote}{}
\footnotetext{}
\footnotetext{\textit{MSC classification:} Primary 22D25 ; Secondary 46L05, 46L10, 20C07, 20C08 }
\footnotetext{\textit{Keywords:} \Cstar-simplicity, group factor, non-amenability, totally disconnected group, Schlichting completion \\}

\section{Introduction}
\label{sec:introduction}

Group \Cstar-algebras and group von Neumann algebras enjoy a long history.  Group von Neumann algebras of discrete groups have been used by McDuff \cite{mcduff69-uncountably} to provide examples of a continuum of pairwise non-isomorphic ${\rm II}_1$-factors.  Connes' conjecture about \Wstar-superrigidity of group von Neumann algebras of discrete groups with property (T) \cite{connes82} drew analogues with lattices in their ambient Lie groups.  Only very recently in a breakthrough result Ioana-Popa-Vaes \cite{ioanapopavaes10} could provide the first examples of so called \Wstar-superrigid groups at all, leaving Connes' conjecture untouched.  Group von Neumann algebras equally well found applications in topology, where \mbox{$\Ltwo$-Betti} numbers can be defined thanks to a continuous notion of dimension provided by the operator algebraic viewpoint \cite{lueck01-book}.  However, until now group von Neumann algebras associated with non-discrete groups drew only minor attention.

Group \Cstar-algebras of discrete groups have been investigated with a focus on the Baum-Connes conjecture and on simplicity results, after Kadison asked whether the reduced group \Cstar-algebra $\Cstarred(\freegrp{2})$ is simple without non-trivial projections \cite{powers75}.  Powers in \cite{powers75} showed with combinatorial methods that $\Cstarred(\freegrp{2})$ is simple.  He used an averaging argument that formed the basis of most follow up results on \Cstar-simplicity.  His argument was put in an abstract context by de la Harpe and lead for example to simplicity results for free products, hyperbolic groups and Baumslag-Solitar groups \cite{delaharpe07-survey,delaharpepreaux11}.  Very recently, the astonishing work of Kalantar-Kennedy and Breuillard-Kalantar-Kennedy-Ozawa \cite{breuillardkalantarkennedyozawa14} basically pushed the question of \Cstar-simplicity to the point, where the only serious open problem had to be solved by group theorists.  Shortly afterwards, the conjecture that a discrete group is \Cstar-simple if and only if it has a trivial amenable radical could be proven wrong by Le Boudec in \cite{leboudec15-cstarsimplicity}.  Further, in the same year Kennedy and Haagerup gave a characterisation of \Cstar-simple groups.  While Kennedy provides a group theoretic characterisation in terms of recurrent amenable subgroups \cite{kennedy15-cstarsimplicity}, Haagerup proves the equivalence of \Cstar-simplicity and the Powers averaging property in \cite{haagerup15}.

Also in representation theory group \Cstar-algebras had a serious impact (see e.g. \cite{rosenberg94}).  The probably easiest approach to classical Peter-Weyl theory studies the group \mbox{\Cstar-algebra} $\Cstar(K)$ of a compact group $K$ .  Further the notion of a type ${\rm I}$ group stems from \Cstar-algebras \cite{kaplansky51,glimm61}.  Their representation theory is completely understood by its irreducible representations \cite{dixmier77}.  Examples of type ${\rm I}$ groups include connected semisimple Lie groups \cite{harish-chandra66}, reductive algebraic groups over non-Archimedean fields \cite{bernstein74-type-I} and the full group of automorphisms of a tree \cite{figa-talamancanebbia91}.  Also \Cstar-simplicity has a representation theoretic interpretation, invoking the Fell topology on unitary representations \cite{fell60}.  In fact, a group is \Cstar-simple if and only if the support of its left regular representation is a closed point in the group's unitary dual.

In this article, we initiate the study of non-discrete \Cstar-simple groups.  Our first result shows that \mbox{every} \Cstar-simple group is totally disconnected, extending a result of Bekka-Cowling-de~la~Harpe. \cite[Proposition~4]{bekkacowlingdelaharpe94}
\begin{thmstar}[Theorem \ref{thm:cstar-simple-implies-td}]
  Let $G$ be a locally compact \Cstar-simple group.  Then $G$ is totally disconnected.
\end{thmstar}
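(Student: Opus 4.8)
Let $G_{0}$ be the connected component of the identity; it is a closed characteristic normal subgroup of $G$, and the assertion is precisely that $G_{0}=\{e\}$. The plan is to assume $G_{0}\neq\{e\}$ and produce a proper non-zero closed two-sided ideal of $\Cstarred(G)$ in three steps (all groups being assumed second countable). The first step is the lemma that a locally compact group $H$ with a non-trivial amenable closed normal subgroup $N$ never has simple reduced \Cstar-algebra. Indeed, amenability of $N$ gives $1_{N}\prec\lambda_{N}$, so by continuity of induction together with induction in stages ($\lambda_{H}=\Ind_{N}^{H}\lambda_{N}$) the representation $\sigma_{0}:=\Ind_{N}^{H}1_{N}$, inflated from the left regular representation of $H/N$, satisfies $\sigma_{0}\prec\lambda_{H}$; hence there is a surjection $\Cstarred(H)\thra\Cstarred(H/N)$ with proper closed kernel $I$. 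If $I=0$ then $\lambda_{H}\sim\sigma_{0}$, so $\lambda_{H}|_{N}\sim\sigma_{0}|_{N}$; but $\sigma_{0}|_{N}$ is a multiple of the trivial representation while $\lambda_{H}|_{N}$ is quasi-equivalent to $\lambda_{N}$ by Mackey's restriction formula for a normal subgroup, so $\lambda_{N}\prec 1_{N}$, i.e. $\|\lambda_{N}(f)\|\le|\int_{N}f|$ for all $f\in\Lone(N)$. This forces integration over $N$ to be injective on $\Lone(N)$, whence $N=\{e\}$, a contradiction.

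\textbf{Reduction of $G_{0}$.}
Since a subgroup characteristic in $G_{0}$ is normal in $G$, the lemma forces every amenable such subgroup to be trivial. In particular the unique maximal compact normal subgroup of $G_{0}$ furnished by Iwasawa's theorem is trivial, so $G_{0}$ is a connected locally compact group with no non-trivial compact normal subgroup and is therefore, by the Gleason--Yamabe solution of Hilbert's fifth problem, a Lie group. Its amenable radical is then trivial, so $G_{0}$ is semisimple; its centre, being amenable and characteristic, is trivial; and it has no compact factors. Thus $G_{0}\cong S_{1}\times\dots\times S_{k}$ with $k\ge 1$ and each $S_{i}$ a non-compact centre-free simple Lie group. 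Consequently $\Out(G_{0})$ is finite, $G_{0}$ is a CCR group \cite{harish-chandra66}, and $\Cstarred(G_{0})$ has infinite primitive spectrum -- it contains the continuum of minimal principal series representations.

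\textbf{Transfer to $G$.}
Choose an irreducible unitary representation $\pi_{0}$ of $G_{0}$ weakly contained in $\lambda_{G_{0}}$, e.g. one coming from an irreducible representation of $\Cstarred(G_{0})$, and set $\sigma:=\Ind_{G_{0}}^{G}\pi_{0}$. As before $\sigma\prec\Ind_{G_{0}}^{G}\lambda_{G_{0}}=\lambda_{G}$, so $\Cstarred(G)\thra\Cstar_{\sigma}(G)$ has a proper closed kernel $I$, and I claim $I\neq0$. Otherwise $\lambda_{G}\sim\sigma$, hence $\lambda_{G}|_{G_{0}}\sim\sigma|_{G_{0}}$. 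By Mackey's restriction formula $\sigma|_{G_{0}}$ is a direct integral of the conjugates $\pi_{0}\circ\Ad_{g}$, $g\in G$; inner automorphisms of $G_{0}$ fix the class of $\pi_{0}$ and $\Out(G_{0})$ is finite, so only finitely many pairwise inequivalent tempered irreducibles $\pi_{0}^{(1)},\dots,\pi_{0}^{(m)}$ occur, and $\sigma|_{G_{0}}$ is weakly equivalent to $\bigoplus_{j}\pi_{0}^{(j)}$. Since $\lambda_{G}|_{G_{0}}$ is weakly equivalent to $\lambda_{G_{0}}$, this yields $\lambda_{G_{0}}\prec\bigoplus_{j}\pi_{0}^{(j)}$, so $\Cstarred(G_{0})$ is a quotient of $\Cstar_{\oplus_{j}\pi_{0}^{(j)}}(G_{0})$. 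But $G_{0}$ is CCR, so each $\pi_{0}^{(j)}$ has image exactly the compact operators and distinct primitive ideals of $\Cstarred(G_{0})$ are coprime; hence $\Cstar_{\oplus_{j}\pi_{0}^{(j)}}(G_{0})\cong\bigoplus_{j=1}^{m}\ko(H_{j})$, every quotient of which has finite primitive spectrum. This contradicts the previous paragraph, so $I\neq0$, $\Cstarred(G)$ is not simple, and therefore $G_{0}=\{e\}$.

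\textbf{Where the difficulty lies.}
The delicate point is the interplay between $G$ and its closed normal subgroup $G_{0}$: \Cstar-simplicity is not visibly inherited by or from closed normal subgroups, so one cannot simply quote the Lie-group case of Bekka--Cowling--de~la~Harpe \cite{bekkacowlingdelaharpe94} for $G_{0}$. The induction--restriction bookkeeping in the transfer step is exactly what bridges this gap, and its only non-formal inputs -- that a semisimple Lie group is CCR and that its tempered dual is infinite -- are classical; the other genuinely external ingredient, used to reduce $G_{0}$ to a Lie group, is the full strength of the Gleason--Yamabe theorem.
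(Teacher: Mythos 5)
Your argument is correct and follows essentially the same route as the paper: reduce to the connected component, eliminate compact normal subgroups and the amenable radical via weak containment of quasi-regular representations, identify $G^{0}$ as a centre-free semisimple Lie group, and then induce a tempered irreducible up to $G$ and use Mackey restriction, finiteness of $\Out(G^{0})$, the CCR property and the infinitude of the tempered dual to contradict simplicity. The only differences are presentational — you package the amenable-normal-subgroup step as a separate lemma and phrase the final contradiction in terms of primitive spectra rather than counting weakly contained irreducibles.
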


We then provide first examples of non-discrete \Cstar-simplicity groups, answering a question of de la Harpe \cite[Question 5]{delaharpe07-survey}.

\begin{questionstar}[de la Harpe]
  Does there exist a non-discrete second countable locally compact group which is \Cstar-simple?
\end{questionstar}

We adapt the combinatorial method of Powers averaging from a discrete setting to the setting of general totally disconnected groups.  Then, inspired by work of de la Harpe-Pr{\'e}aux, we study the action of a closed subgroup of $\Aut(T)$ on the boundary $\partial T$ of the tree, in order to obtain by geometric means the necessary input to apply Powers averaging to group \Cstar-algebras of some natural class of non-discrete locally compact groups.  The main achievement of this article is hence twofold.  On the one hand, we are able to answer de la Harpe's question, giving examples of \Cstar-simple non-discrete second countable locally compact groups.  On the other hand, we show how averaging techniques, well known for discrete groups, can be exploited in operator algebras associated with locally compact totally disconnected groups.

The class of groups treated in this article, has a geometric and an algebraic formulation.  We introduce the following notation, where $\cN_G(K)$ denotes the normaliser of a subgroup $K \leq G$ and $G_0$ denotes the kernel of the modular function of a locally compact group.  Recall that a tree is called thick if each of its vertices has valency at least 3.

\textbf{Notation} (See Theorem \ref{thm:characterisation-condition-star})\textbf{.}
  We say that a locally compact group $G$ satisfies condition ($*$) if one of the following equivalent conditions holds.
  \begin{itemize}
  \item There is a locally finite tree $T$ such that $G \leq \Aut(T)$ as a closed subgroup.  Further, $G$ is non-amenable without any non-trivial compact normal subgroup and there is a compact open subgroup $K \leq G$ such that $\cN_G(K)/K$ contains an element of infinite order.
  \item There is a locally finite thick tree $T$ such that $G \leq \Aut(T)$ as a closed subgroup.  Further, $G$ acts minimally on $\partial T$ and there is some point $x \in \partial T$ such that $G_x \leq G$ is open and $G_x \cap G_0$ contains a hyperbolic element.
  \end{itemize}

We can now formulate the main theorem of this article.
\begin{thmstar}[Theorem \ref{thm:cstar-simplicity}]
  \label{thm:introduction:cstar-simplicity}
  Let $G$ be a group satisfying condition ($*$).  Then $\Cstarred(G)$ is simple.  
\end{thmstar}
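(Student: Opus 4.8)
The plan is to adapt Powers' averaging technique, in the abstract form due to de la Harpe, to totally disconnected locally compact groups, and to harvest the combinatorial input it needs from the action of $G$ on $\partial T$. By Theorem~\ref{thm:characterisation-condition-star} I may replace condition~($*$) by its geometric formulation: $G$ is a closed subgroup of $\Aut(T)$ for a locally finite thick tree $T$, the action $G \curvearrowright \partial T$ is minimal, and there is $\xi \in \partial T$ with $G_\xi$ open and $G_\xi \cap G_0$ containing a hyperbolic element $h$; after replacing $h$ by $h^{-1}$ if necessary, $\xi$ is the attracting fixed point of $h$ on $\partial T$, and I write $\eta$ for the repelling one. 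Fix a vertex $v$ on the axis of $h$ and put $K = G_v$, a compact open subgroup. Its core $\bigcap_{g \in G} gKg^{-1}$ is a compact normal subgroup of $G$, hence trivial, since a minimal boundary action on a thick tree forbids nontrivial compact normal subgroups; so the projection $p_K = \mu(K)^{-1}\mathbbm 1_K \in \Cstarred(G)$ is full, and $\Cstarred(G)$ is simple as soon as the unital corner $A = p_K \Cstarred(G) p_K$ is. It is $A$ that I analyse; it identifies with an algebra of $K$-bi-invariant functions acting on $\ell^2(G/K)$, where $G/K$ embeds into $VT$ with closure inside $VT \cup \partial T$, so that $h^n v \to \xi$ and $h^{-n} v \to \eta$.

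Next I would record the averaging criterion: $A$ is simple provided the canonical vector state $\tau$ on $A$ --- the one attached to the cyclic vector $\mu(K)^{-1/2}\mathbbm 1_K \in p_K \Ltwo(G)$, which fulfils $\tau(p_K) = 1$, is faithful, and restricts to a trace on the \Cstar-subalgebra generated by the double cosets $KgK$ with $g \in G_0$ (this is precisely where the hypothesis $h \in G_0 = \ker \Delta$ is used, the modular function otherwise breaking the trace property) --- has the following feature: for every self-adjoint $a$ in a dense $*$-subalgebra with $\tau(a) = 0$ and every $\varepsilon > 0$ there are finitely many $t_1, \dots, t_n \in G$ with $\bigl\| \tfrac1n \sum_{j=1}^n p_K \lambda(t_j) a \lambda(t_j)^* p_K \bigr\| < \varepsilon$. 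The criterion follows by the classical argument: a nonzero closed ideal of $\Cstarred(G)$ meets $A$ in a nonzero ideal, which then contains a self-adjoint $y$ from the dense $*$-subalgebra with $\tau(y)$ bounded away from $0$; averaging $y - \tau(y) p_K$ down to small norm forces $\tau(y) p_K$, hence $p_K$, hence everything, into the ideal. It therefore suffices to handle $a = p_K \lambda(g_i) p_K$ with $g_i \notin K$, i.e. functions supported on finitely many nontrivial double cosets, and in fact --- sandwiching by $p_K$ --- to make $\bigl\| \tfrac1n \sum_j \lambda(t_j g_i t_j^{-1}) \bigr\| < \varepsilon$ in $\Cstarred(G)$ for all $i$ at once.

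The core of the proof is the geometric construction of these conjugators, modelled on de la Harpe--Pr\'eaux. A finite family of nontrivial double cosets $K g_1 K, \dots, K g_r K$ moves the end $\eta$ by bounded amounts, so one can choose a clopen partition of the boundary part of $G/K$ into a small piece $E$ around $\eta$ and its complement $D$, arranged so that no $g_i$ maps $D$ into itself. Minimality of $G \curvearrowright \partial T$ --- which lets one translate any nonempty clopen set onto a finite cover of $\partial T$ --- together with the contraction property of $h$ (for every clopen $U \ni \xi$ there is $N$ with $h^N(\partial T \setminus V) \subseteq U$ for a prescribed clopen $V \ni \eta$) then allows one to build elements $t_1, \dots, t_n \in G$, each of the form ``contract strongly towards $\xi$ by a high power of $h$, then redistribute by suitable group elements'', such that the clopen sets $t_j D$ are pairwise disjoint; that $n$ may be taken arbitrarily large uses non-amenability (equivalently, here, thickness of the tree plus the hyperbolic element), which supplies the paradoxical room. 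With such a partition and such conjugators the standard Powers estimate bounds $\bigl\| \tfrac1n \sum_j \lambda(t_j g_i t_j^{-1}) \bigr\|$ by a quantity uniform in $i$ that tends to $0$ as $n \to \infty$ --- or, with $n$ fixed, can be driven below $\varepsilon$ by iterating the averaging on an already-averaged element --- and feeding this into the criterion of the previous paragraph completes the proof.

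I expect the genuine difficulty to lie in this geometric step. First, de la Harpe--Pr\'eaux's ``partition of the group'' must be transported to the non-discrete world, where the right object is a clopen partition of the compact space $\partial T$ (equivalently, a partition of the discrete coset space $G/K$ with controlled behaviour at infinity), and one must then check that the resulting averaging inequality survives the passage through Haar measure and the corner $p_K \Cstarred(G) p_K$. Second, the modular function has to be tracked throughout: the contracting elements entering the construction must be chosen inside $G_0$, so that conjugating by them is compatible with the trace-like state $\tau$ on which the whole averaging argument rests --- which is the structural reason condition~($*$) requires both that the hyperbolic element lie in $G_\xi \cap G_0$ and that $G$ have no nontrivial compact normal subgroup.
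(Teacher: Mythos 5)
Your overall strategy (Powers averaging adapted to the totally disconnected setting, with the conjugators produced by boundary dynamics of hyperbolic elements and constrained to lie in $G_0$) is the same as the paper's, but two steps you treat as routine are in fact where the real work lies, and one of them as you state it is false.

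First, the reduction to the corner. You claim that since the core $\bigcap_{g} gKg^{-1}$ is trivial, $p_K$ is full in $\Cstarred(G)$. That implication does not hold: $p_K$ is full if and only if every irreducible representation weakly contained in $\lambda_G$ has a nonzero $K$-invariant vector, and triviality of the core says nothing about this. For example, for $G = \mathrm{PGL}(2,\QQ_p)$ and $K$ a maximal compact open subgroup the core is trivial, yet the supercuspidal representations are subrepresentations of $\lambda_G$ with no nonzero $K$-fixed vectors, so $p_K$ lies in a proper closed ideal. The paper explicitly remarks that it knows no simple criterion for fullness and devotes an entire section (Proposition \ref{prop:pK-full-projection}) to proving it for groups satisfying ($*$): the argument is itself a Powers-averaging argument, showing that $p_K \in I$ forces $p_{K \cap G_x} \in I$ for every boundary point $x$ with open stabiliser, and then using that such subgroups form a neighbourhood basis of $e$ together with the fact that $(p_L)_L$ is a strict approximate unit. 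Without some such argument your reduction ``$\Cstarred(G)$ is simple as soon as $p_K\Cstarred(G)p_K$ is'' is unjustified.

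Second, the quantitative control on the conjugators. In the discrete Powers argument the conjugating unitaries fix the unit of the algebra; here $u_{t_j} p_K u_{t_j}^* = p_{t_j K t_j^{-1}} \neq p_K$, so after averaging $y$ and compressing by $p_K$ the ``constant term'' you are left with is $\frac{1}{n}\sum_j p_K u_{t_j} p_K u_{t_j}^* p_K$, not $\tau(y)p_K$. To conclude that $p_K$ lies in the ideal you must know that this element is invertible in the corner with inverse bounded \emph{uniformly in $n$}; the paper gets this from the estimate $p_K u_t^* p_K u_t p_K \geq [K : K \cap tKt^{-1}]^{-2} p_K$ (Lemma \ref{lem:invertible-average}) combined with a uniform bound $r$ on the indices $[K : K \cap t_j K t_j^{-1}]$, which is precisely the ``control $r$'' written into Definition \ref{def:lc-powers-group} and achieved geometrically in Lemma \ref{lem:many-hyperbolic-elements} by taking the $t_j$ to be conjugates, by a single element, of hyperbolic elements that commensurate a fixed compact open subgroup boundedly. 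Requiring $t_j \in G_0$ only gives $\Delta(t_j)=1$; it does not bound $[K : K \cap t_j K t_j^{-1}]$, and without that bound the perturbation step ``small norm forces $p_K$ into the ideal'' does not close. This uniform control is the main new technical ingredient of the paper's proof and is absent from your sketch.
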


We give two applications of our main result.  Since group \Cstar-algebras of totally disconnected groups and Hecke \Cstar-algebras are related, our result also implies simplicity of certain reduced Hecke \Cstar-algebras (Section \ref{sec:hecke-cstar-algebras}), which is in sharp contrast to results previously obtained.
\begin{thmstar}[Corollary \ref{cor:simple-Hecke-algebra}]
  Let $T$ be thick tree and $\Gamma \leq \Aut(T)$ some not necessarily closed group acting without proper invariant subtree. Let $\Lambda$ be some vertex stabiliser in $\Gamma$ and assume that there is a finite index subgroup $\Lambda_0 \leq \Lambda$ such that $\cN_\Gamma(\Lambda_0)/\Lambda_0$ contains an element of infinite order.  Then the reduced Hecke \Cstar-algebra $\Cstarred(\Gamma, \Lambda)$ is simple.
\end{thmstar}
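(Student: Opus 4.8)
The plan is to deduce the corollary from Theorem~\ref{thm:cstar-simplicity} by realising $\Cstarred(\Gamma,\Lambda)$ as a corner of $\Cstarred(G)$, where $G$ is the Schlichting completion of the pair $(\Gamma,\Lambda)$, and then checking that $G$ satisfies condition~($*$).

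First I would record that $(\Gamma,\Lambda)$ is a Hecke pair: the vertex stabiliser $\Lambda=\Gamma_v$ is commensurated since the $\Gamma_v$-orbits on the (locally finite) thick tree $T$ are finite, and $\Lambda$ is core-free since an element of $\Gamma$ fixing the orbit $\Gamma v$ pointwise fixes its convex hull, which is all of $T$ because $\Gamma$ has no proper invariant subtree. Let $G$ be the Schlichting completion of $(\Gamma,\Lambda)$; concretely it is the closure of $\Gamma$ in $\Aut(T)$, a totally disconnected locally compact group containing $\Gamma$ densely, with $K:=\overline{\Lambda}$ a compact open subgroup and $\Gamma\cap K=\Lambda$. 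Fixing a Haar measure $\mu$, the element $p:=\mu(K)^{-1}\mathbbm{1}_K$ is a projection in $\Cstarred(G)$, and by a well-known result of Tzanev (see also Kaliszewski--Landstad--Quigg) there is a natural isomorphism $\Cstarred(\Gamma,\Lambda)\cong p\,\Cstarred(G)\,p$. A non-zero corner $pAp$ of a simple \Cstar-algebra $A$ is again simple: a non-zero closed two-sided ideal $J$ of $pAp$ gives the ideal $\overline{AJA}=A$, whence $p=ppp\in\overline{pAJAp}\subseteq J$ and so $J=pAp$. Hence it suffices to prove $\Cstarred(G)$ simple, and for this, by Theorem~\ref{thm:cstar-simplicity}, that $G$ satisfies~($*$).

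I would verify the first of the two equivalent forms of~($*$), with $K_0:=\overline{\Lambda_0}$ as the distinguished compact open subgroup. By construction $G$ is a closed subgroup of $\Aut(T)$ and $T$ is a locally finite thick tree. Every $G$-invariant subtree is $\Gamma$-invariant, so $\Gamma$ acting without proper invariant subtree forces $G$ to do the same; in particular $G$ has no non-trivial compact normal subgroup $N$, as the fixed-point set of such an $N$ in the geometric realisation of $T$ would be a non-empty $G$-invariant subtree, hence all of $T$, contradicting faithfulness. For the normaliser clause, $K$ is covered by the finitely many left cosets $\lambda_i\overline{\Lambda_0}$ coming from coset representatives of $\Lambda_0$ in $\Lambda$, so $[K:K_0]\le[\Lambda:\Lambda_0]<\infty$ and $K_0$ is compact open. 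An element $g\in\cN_\Gamma(\Lambda_0)$ of infinite order modulo $\Lambda_0$ satisfies $gK_0g^{-1}=\overline{g\Lambda_0g^{-1}}=K_0$, so $g\in\cN_G(K_0)$; and $g$ has infinite order modulo $K_0$, because with $\Delta:=\langle\Lambda_0,g\rangle$ the subgroup $\Lambda_0$ is normal in $\Delta$ with $\Delta/\Lambda_0\cong\ZZ$, while $\Delta\cap K_0\subseteq\Gamma\cap K=\Lambda$ forces $[\Delta\cap K_0:\Lambda_0]\le[\Lambda:\Lambda_0]<\infty$, so the image of $\Delta\cap K_0$ in $\Delta/\Lambda_0$ is a finite subgroup of $\ZZ$, hence trivial, i.e.\ $\Delta\cap K_0=\Lambda_0$, and no positive power of $g$ lies in $K_0$. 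Finally, $G$ is non-amenable: an amenable closed subgroup of $\Aut(T)$ has a bounded orbit, fixes a single end, or stabilises a pair of ends; the first and third are excluded because they would produce a proper $G$-invariant subtree (a vertex or an edge, respectively a bi-infinite geodesic, proper as $T$ is thick), and the remaining case — that $G$ fixes an end — is ruled out by the normaliser clause, since inside an end-stabiliser of $\Aut(T)$ no compact open subgroup contained in a vertex stabiliser can have an infinite-order element in its normaliser quotient; this is precisely the end-stabiliser analysis underlying Theorem~\ref{thm:characterisation-condition-star}. Thus $G$ satisfies~($*$), $\Cstarred(G)$ is simple, and therefore so is $\Cstarred(\Gamma,\Lambda)$.

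The genuinely new content is all in Theorem~\ref{thm:cstar-simplicity}; the work here is translation, and I expect the two points needing care to be (a) the corner identification $\Cstarred(\Gamma,\Lambda)\cong p\,\Cstarred(G)\,p$, which has to be invoked for the \emph{reduced} — not the full — algebras and for the Schlichting completion specifically, and (b) checking that every clause of~($*$) survives passage from $\Gamma$ to $G=\overline{\Gamma}$. The torsion-freeness of $\Delta/\Lambda_0$ is exactly what transplants the normaliser clause cleanly, whereas ruling out the amenable (end-fixing) case is the one step that is not pure bookkeeping and is where I would lean on the boundary and end-stabiliser arguments already developed in the paper.
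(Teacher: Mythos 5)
Your overall route is exactly the paper's: pass to the Schlichting completion $G=\ol{\Gamma}\leq\Aut(T)$, identify $\Cstarred(\Gamma,\Lambda)$ with the corner $p_K\Cstarred(G)p_K$ via Tzanev, note that corners of simple \Cstar-algebras are simple, and verify condition ($*$) for $G$. The Hecke-pair and core-freeness checks, the corner argument, the absence of compact normal subgroups, and your transfer of the normaliser clause to $K_0=\ol{\Lambda_0}$ (via the torsion-freeness of $\langle\Lambda_0,g\rangle/\Lambda_0$) are all sound --- the last point is in fact argued more carefully than in the paper, which simply asserts $\cN_G(L)/L=\cN_\Gamma(\Lambda_0)/\Lambda_0$.

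There is, however, a genuine gap in the non-amenability verification, in the case where $G$ fixes a single end. Your stated reason --- that inside an end-stabiliser of $\Aut(T)$ no compact open subgroup contained in a vertex stabiliser can have an infinite-order element in its normaliser quotient --- is false, and the paper in fact establishes the opposite: by Lemma \ref{lem:strucutre-of-point-stabilisers} and Proposition \ref{prop:characterisation-both-fixed-points-with-open-stabiliser}, whenever a hyperbolic element $g\in G_0$ normalises a compact open subgroup $K_0$, the amenable open subgroup $\langle K_0,g\rangle$ fixes an end of the axis of $g$ while the image of $g$ in the normaliser quotient has infinite order, and the proof of Theorem \ref{thm:characterisation-condition-star} uses the normaliser clause precisely to \emph{produce} such an end-fixing amenable subgroup, not to exclude one. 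So the normaliser clause cannot rule out a fixed end. Nor does ``no proper invariant subtree'' alone do so: the full stabiliser in $\Aut(T)$ of an end of the regular tree acts vertex-transitively, hence preserves no proper subtree, yet fixes an end and is amenable. What is needed is the interaction of the two hypotheses --- thickness together with the absence of a proper invariant subtree forces minimality of $G\grpaction{}\partial T$ (Proposition \ref{prop:equivalence-minimal-action}), and since $\partial T$ is a Cantor space a minimal action cannot fix a point of $\rV(T)\cup\rE(T)\cup\partial T$; only then does Adams--Ballmann (Proposition \ref{prop:adams-ballmann}) yield non-amenability. Your handling of the bounded-orbit and two-ends cases is fine, but the single-end case needs this boundary-minimality argument (equivalently, an argument exploiting that the axes of the hyperbolic elements of $G$, all of which would share the fixed end, must cover the thick tree $T$); as written, that step would fail.
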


As a further corollary of Theorem \ref{thm:introduction:cstar-simplicity}, we obtain a result about type ${\rm I}$ groups acting on trees.   A locally compact group $G$ is called type ${\rm I}$ group if each of its unitary representations generates a type ${\rm I}$ von Neumann algebra.  

\begin{thmstar}[Corollary \ref{cor:not-type-I}]
  Let $T$ be a thick tree and $G \leq \Aut(T)$ be a closed subgroup acting minimally on $\partial T$.  Assume that there is $x \in \partial T$ such that 
  \begin{itemize}
  \item $Kx$ is finite for some compact open subgroup $K \leq G$, and
  \item there is some hyperbolic element in $G_0 \cap G_x$.
  \end{itemize}
  Then $G$ is not a type ${\rm I}$ group.
\end{thmstar}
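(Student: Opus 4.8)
The plan is to deduce non-type-${\rm I}$-ness from the \Cstar-simplicity established in Theorem \ref{thm:cstar-simplicity}, by first verifying that the hypotheses here imply condition ($*$). So the first step is a bookkeeping one: under the standing assumption that $G \leq \Aut(T)$ is closed, $T$ is thick, and $G$ acts minimally on $\partial T$, I must produce a point $x \in \partial T$ with $G_x$ open in $G$ and $G_x \cap G_0$ containing a hyperbolic element, matching the second bullet of the Notation. The hypothesis gives $x$ with $Kx$ finite for some compact open $K \leq G$; then $G_x$ has finite index in the open subgroup $K_x'$ fixing the (finite) orbit setwise — more precisely, the stabiliser of the finite set $Kx$ is open and contains $G_x$ as a finite-index subgroup, hence $G_x$ is open. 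Combined with the assumed hyperbolic element in $G_0 \cap G_x$, the second formulation of condition ($*$) holds, so by Theorem \ref{thm:characterisation-condition-star} the group $G$ satisfies ($*$) and Theorem \ref{thm:cstar-simplicity} applies: $\Cstarred(G)$ is simple.

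Now I invoke the standard fact that a type ${\rm I}$ group cannot have a simple reduced \Cstar-algebra unless that algebra is elementary (isomorphic to the compacts on some Hilbert space), which would force $G$ to be compact — and $G$ is non-amenable, in particular non-compact, since condition ($*$) includes non-amenability. More carefully: if $G$ were type ${\rm I}$, then $\Cstar_\lambda(G)$, being a quotient of $\Cstar(G)$, would itself be type ${\rm I}$ (postliminal); a simple postliminal \Cstar-algebra has a unique irreducible representation up to equivalence and is isomorphic to $\ko(\cH)$ for some $\cH$. But $\Cstarred(G)$ contains the unital subalgebra generated by $\lambda(G)$, so it is unital, forcing $\cH$ finite-dimensional and hence $G$ amenable (indeed the trivial representation would be weakly contained in $\lambda$, contradicting non-amenability, or simply: a finite-dimensional $\Cstarred(G)$ forces $G$ finite). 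This contradiction shows $G$ is not type ${\rm I}$.

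The only real subtlety — and the step I would be most careful about — is the first one: confirming that ``$Kx$ finite for some compact open $K$'' genuinely yields ``$G_x$ open''. The point is that $G_x = \bigcap_{k \in Kx} G_{kx}$ need not itself be open a priori, but the stabiliser in $G$ of the finite subset $Kx \subseteq \partial T$ is open (it contains the open subgroup $K \cap \bigcap_{k} \Stab_G(kx)$... one must check openness directly), and this setwise stabiliser permutes the finite set $Kx$, so $G_x$ is the kernel of a homomorphism from an open group to a finite symmetric group, hence open of finite index. Everything else is a direct application of the quoted results, so I would keep the write-up short: state the reduction to condition ($*$), cite Theorem \ref{thm:characterisation-condition-star} and Theorem \ref{thm:cstar-simplicity}, then give the two-line representation-theoretic contradiction using non-amenability.
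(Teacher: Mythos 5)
Your reduction to condition ($*$) is fine and is exactly what the paper does: the equivalence ``$Kx$ finite for some compact open $K$'' $\Leftrightarrow$ ``$G_x$ open'' is Proposition \ref{prop:characaterisation-open-stabiliser}, and your setwise-stabiliser argument is a correct proof of the relevant implication. The gap is in the endgame. For a non-discrete locally compact group $\Cstarred(G)$ is \emph{not} unital --- the unitaries $\lambda(g)$ live only in the multiplier algebra $\rM(\Cstarred(G))$, and the projections $p_K$ form an approximate unit converging strictly but not in norm to $1$ (see Section \ref{sec:group-operator-algebras} and Proposition \ref{prop:approximate-unit}). So from ``simple and postliminal'' you only get $\Cstarred(G) \cong \ko(\cH)$ with $\cH$ possibly infinite-dimensional, and your contradiction evaporates. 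The fallback you sketch also fails: for non-discrete groups neither nuclearity of $\Cstarred(G)$ nor amenability of $\rL(G)$ implies amenability of $G$ (e.g.\ $\mathrm{SL}(2,\RR)$ is a non-amenable type ${\rm I}$ group, so its reduced group \Cstar-algebra is nuclear and its group von Neumann algebra is injective). Since the whole content of the corollary is the non-discrete case, this is a genuine missing idea rather than a fixable slip.

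What the paper does instead, after the same reduction to ($*$) and \Cstar-simplicity: it passes to the open subgroup $G_0$ to assume $G$ unimodular, deduces $\rL(G) \cong \bo(H)$ from ``type ${\rm I}$ and \Cstar-simple'', uses that the Plancherel weight is then the standard trace together with $\vphi(p_K) < \infty$ to conclude that $p_K$ is a finite projection, hence $p_K \rL(G) p_K \cong \rM_n(\CC)$; and then it derives a contradiction from the dynamics of a hyperbolic element $g \in G_x$: the elements $p_K u_{g^n} p_K$ converge to $0$ weakly, yet $p_K u_{g^n}^* p_K u_{g^n} p_K \geq [K : K \cap g^n K g^{-n}]^{-2} p_K$ is bounded below by Lemma \ref{lem:invertible-average} and Proposition \ref{prop:characterisation-fixed-point-with-open-stabiliser}, which is impossible in a finite-dimensional algebra where the weak and norm topologies coincide. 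Some such quantitative input from the tree action is unavoidable; ``simple $+$ type ${\rm I}$'' alone does not suffice.
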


Note that our result is in contrast to and motivated by the following conjecture on type ${\rm I}$ groups. 

\begin{conjecturestar}
  Let $G \leq \Aut(T)$ be a closed subgroup.  Assume that there is a compact open subgroup of $G$ acting transitively on $\partial T$.  Then $G$ is a type ${\rm I}$ group.
\end{conjecturestar}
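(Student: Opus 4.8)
The plan is to read off the statement from the $\Cstar$-simplicity theorem together with the elementary structure theory of simple type~${\rm I}$ $\Cstar$-algebras: the guiding idea is that a \emph{simple} reduced group $\Cstar$-algebra can never be of type~${\rm I}$ once the group is non-compact and possesses a compact open subgroup.

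First I would verify that the hypotheses are precisely the second (geometric) formulation of condition~($*$). The tree is thick and locally finite, $G \leq \Aut(T)$ is closed, $G$ acts minimally on $\partial T$, and $G_0 \cap G_x$ contains a hyperbolic element --- all of this is assumed verbatim. The one point needing an argument is that $G_x$ is open. Since $G$ acts continuously on the compact Hausdorff space $\partial T$, the stabiliser $G_x$ is closed, so $K \cap G_x = \Stab_K(x)$ is closed in the compact group $K$; as $Kx$ is finite, $\Stab_K(x)$ has finite index in $K$, and a closed finite-index subgroup of a compact group is open. Hence $K \cap G_x$ is open in $G$ and \emph{a fortiori} so is $G_x$. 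By Theorem~\ref{thm:characterisation-condition-star} the group $G$ therefore satisfies condition~($*$), and Theorem~\ref{thm:cstar-simplicity} yields that $\Cstarred(G)$ is simple.

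Next I would show that simplicity of $\Cstarred(G)$ is incompatible with $G$ being of type~${\rm I}$. If $G$ were of type~${\rm I}$, then $\Cstar(G)$ would be a GCR (type~${\rm I}$) $\Cstar$-algebra, and hence so would its quotient $\Cstarred(G)$. A simple GCR $\Cstar$-algebra is elementary, that is, isomorphic to $\ko(\cH)$ for some Hilbert space $\cH$. I would contradict this by producing a projection in $\Cstarred(G)$ whose corner is infinite-dimensional. Fix a Haar measure $\mu$ and set $p = \mu(K)^{-1}\lambda(\mathbf{1}_K)$; since $K$ is a compact subgroup one has $\mathbf{1}_K * \mathbf{1}_K = \mu(K)\,\mathbf{1}_K$ and $\mathbf{1}_K^{*} = \mathbf{1}_K$, so $p$ is a projection in $\Cstarred(G)$, and the hereditary subalgebra $p\,\Cstarred(G)\,p$ is exactly the reduced Hecke $\Cstar$-algebra $\Cstarred(G,K)$ of bi-$K$-invariant functions.

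It remains to see that this corner is infinite-dimensional, which is where the geometric hypothesis enters and which I regard as the main obstacle. Because $G$ contains a hyperbolic element it is non-compact (a compact subgroup of $\Aut(T)$ fixes a vertex or an edge, whereas a hyperbolic element fixes neither), while each double coset $KgK$ is a finite union of left $K$-cosets and hence compact; were there only finitely many double cosets, $G$ would be a finite union of compact sets and thus compact. So $K\backslash G/K$ is infinite, the indicators $\mathbf{1}_{KgK}$ are linearly independent in $\Cstarred(G,K)$, and the corner $p\,\Cstarred(G)\,p$ is infinite-dimensional. On the other hand, in $\ko(\cH)$ every projection is finite-rank, so every corner $q\,\ko(\cH)\,q$ is finite-dimensional. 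This contradiction forces $\Cstarred(G) \not\cong \ko(\cH)$, whence $G$ is not of type~${\rm I}$. The identification of the hypotheses with condition~($*$) and the implication ``simple GCR $\Rightarrow \ko(\cH)$'' are routine, so the genuinely new input is isolating the infinite-dimensional Hecke corner supplied by the compact open subgroup $K$ together with non-compactness of $G$.
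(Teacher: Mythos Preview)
The statement you were given is a \emph{conjecture}, not a theorem; the paper offers no proof of it, and it is presented as open. More seriously, you have misread both the hypotheses and the conclusion. The conjecture assumes that some compact open subgroup $K \leq G$ acts \emph{transitively} on $\partial T$ and concludes that $G$ \emph{is} of type~${\rm I}$. You instead argue that $G$ is \emph{not} of type~${\rm I}$, and you do so under the hypotheses of the neighbouring Corollary~\ref{cor:not-type-I} (thick tree, minimality of $G \grpaction{} \partial T$, $Kx$ finite, a hyperbolic element in $G_0 \cap G_x$), none of which appear in the conjecture.

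These two sets of hypotheses are not merely different but mutually exclusive in the relevant range: if $K$ acts transitively on $\partial T$ then $Kx = \partial T$ is infinite for every $x$, so by Proposition~\ref{prop:characaterisation-open-stabiliser} the stabiliser $G_x$ is \emph{never} open and condition~($*$) fails outright. Your step ``$G_x$ is open because $Kx$ is finite'' therefore has no purchase on the conjecture.

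What you have actually written is an alternative proof of Corollary~\ref{cor:not-type-I}, and as such it is correct and arguably tidier than the paper's. The paper reduces to the unimodular subgroup $G_0$, uses that the Plancherel weight is then a trace to force $p_K$ to be finite-rank in $\bo(H)$, and derives a contradiction by comparing weak and strong convergence of $p_K u_{g^n} p_K$. Your route---simple type~${\rm I}$ $\Cstar$-algebra $\Rightarrow$ elementary, then exhibit an infinite-dimensional Hecke corner from non-compactness of $G$---avoids both the passage to $G_0$ and the convergence argument. But as a response to the stated conjecture it is aimed at the wrong target.
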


Since to a certain extend we are able to control weights on a group \Cstar-algebra, our methods are also able to deal with von Neumann algebraic results.  We obtain the following factoriality results for group von Neumann algebras of non-discrete groups and we are able to determine their type in terms of the modular function.  We refer the reader to Section \ref{sec:type} for some facts about Connes' $\rS$-invariant $\rS(M)$ for a factor $M$.
\begin{thmstar}[Theorem \ref{thm:factoriality}]
  Let $G$ be a group satisfying condition ($*$).  Further assume that some compact open subgroup of $G$ is topologically finitely generated.  Then the group von Neumann algebra $\rL(G)$ is a factor and $\rS(\rL(G)) = \ol{\Delta(G)}$.
  \begin{itemize}
  \item If $G$ is discrete, then $\rL(G)$ is a type ${\rm II}_1$ factor.
  \item If $G$ is unimodular but not discrete, then $\rL(G)$ is a type ${\rm II}_\infty$ factor.
  \item If $\Delta(G) = \lambda^\ZZ$ for some $\lambda \in (0,1)$, then $\rL(G)$ is a type ${\rm III}_\lambda$ factor.
  \item If $\Delta(G)$ is not singly generated, then $\rL(G)$ is a type ${\rm III}_1$ factor.
  \end{itemize}
\end{thmstar}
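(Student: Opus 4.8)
The plan is to analyse $\rL(G)$ through its Plancherel weight. For non-discrete $G$, $\rL(G)$ carries the canonical faithful normal semifinite weight $\varphi$ built from the left Hilbert algebra $\contc(G)$, with modular automorphism group $\sigma^\varphi_t(\lambda(f))=\lambda(\Delta_G^{it}\cdot f)$ on the dense subalgebra — so $\sigma^\varphi$ is implemented on $\Ltwo(G)$ by multiplication with $\Delta_G^{it}$ — while for discrete $G$ it is the canonical trace and $\sigma^\varphi=\id$. Since every compact subgroup lies in $G_0$, this exhibits $\sigma^\varphi$ as the $\RR$-grading of $\rL(G)$ dual to $\log\circ\Delta_G$, so that the centraliser is $\rL(G)^\varphi=\rL(G_0)$; as $\Delta_G|_{G_0}\equiv 1$, the closed subgroup $G_0$ is unimodular, hence $(G_0)_0=G_0$. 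With $x$ and $T$ as in the second form of condition ($*$), one checks that $G_0$ again satisfies that form: it is closed in $\Aut(T)$, the subgroup $(G_0)_x=G_x\cap G_0$ is open in $G_0$ and contains a hyperbolic element, and $G_0$ acts minimally on $\partial T$, its limit set being a non-empty $G$-invariant closed subset of $\partial T$ hence all of $\partial T$ (and $G_0$ fixes no end, containing a hyperbolic element and being normal in the boundary-minimal group $G$). Since the topologically finitely generated compact open subgroup of $G$ lies in $G_0$, it therefore suffices to prove (a) that $\rL(H)$ is a factor whenever $H$ satisfies ($*$) and has a topologically finitely generated compact open subgroup, and (b) that the type of $\rL(G)$ is as asserted.

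For (a), when $H$ is discrete this is the classical passage from uniqueness of the tracial state to factoriality, the former being a standard consequence of the Powers averaging property extracted from the proof of Theorem~\ref{thm:cstar-simplicity}. For non-discrete $H$ one shows instead that the semifinite (and, over $H_0$, tracial) von Neumann algebra $\rL(H)$ admits a unique normal semifinite trace up to scaling, hence is a factor: the Powers averaging property is now applied to normal states, and is combined with the finite trace it induces on the Hecke corner $e_K\rL(H)e_K$, where $e_K=\lambda(\mu(K)^{-1}1_K)$ for a compact open $K\leq H$, together with uniqueness of the trace of the reduced Hecke \Cstar-algebra obtained as in Corollary~\ref{cor:simple-Hecke-algebra}. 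This is where the hypothesis that some compact open subgroup is topologically finitely generated enters: it makes $\rL(H)$ have separable predual and supplies the countable combinatorial bookkeeping that Powers' averaging requires. Transplanting Powers' combinatorial method from the \Cstar-algebra into the weight theory of the non-tracial, non-$\sigma$-finite von Neumann algebra $\rL(G)$ is, I expect, the main obstacle.

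Granting (a), both $\rL(G)$ and $\rL(G)^\varphi=\rL(G_0)$ are factors, so by Connes' theorem relating the $\rS$-invariant to the modular spectrum when the centraliser is a factor (see Section~\ref{sec:type}), the Connes spectrum of $\rL(G)$ equals the Arveson spectrum of $\sigma^\varphi$. The latter is read off the formula for $\sigma^\varphi$: the eigenoperators $\lambda(f)$ with $f$ supported on $\{g:\Delta_G(g)=e^{s}\}$ show the point spectrum to be $\log\Delta_G(G)$, so the Arveson spectrum is $\ol{\log\Delta_G(G)}$; exponentiating, and adjoining $0$ in the type ${\rm III}$ case, gives $\rS(\rL(G))=\ol{\Delta(G)}$. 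The four cases follow by inspecting $\ol{\Delta(G)}\subseteq[0,\infty)$. If $\Delta(G)=\{1\}$ then $\sigma^\varphi=\id$, so $\varphi$ is a trace and $\rL(G)$ is semifinite; it is finite — hence of type ${\rm II}_1$ — precisely when $G$ is discrete, whereas for $G$ unimodular but not discrete the Hecke projections $e_{K_n}$ of a neighbourhood basis $K_n\downarrow\{e\}$ satisfy $e_{K_n}\nearrow 1$ with $\varphi(e_{K_n})=\mu(K_n)^{-1}\to\infty$, so $\varphi(1)=\infty$ and $\rL(G)$ — a non-finite semifinite factor which is not of type ${\rm I}$, since $\rL(G)=\bo(\Ltwo(G))$ would force $\rho(G)\subseteq\CC 1$ and hence make the non-amenable group $G$ abelian — is of type ${\rm II}_\infty$. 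If $\Delta(G)=\lambda^{\ZZ}$ then $\ol{\Delta(G)}=\{0\}\cup\lambda^{\ZZ}$ and the Connes spectrum is $(\log\lambda)\ZZ$, so $\rL(G)$ is of type ${\rm III}_\lambda$. Finally, if $\Delta(G)$ is not singly generated it is a dense subgroup of $\RR_{>0}$, so $\ol{\Delta(G)}=[0,\infty)$, the Connes spectrum is $\RR$, and $\rL(G)$ is of type ${\rm III}_1$.
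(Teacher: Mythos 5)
Your overall architecture --- Plancherel weight, centraliser equal to $\rL(G_0)$, Connes' criterion $\rS(M)=\sigma(\Delta_\vphi)$ when the centraliser is a factor, and the four type cases read off from $\ol{\Delta(G)}$ --- matches the paper, and your reduction of factoriality of $\rL(G)$ to factoriality of the centraliser via $\cZ(M)\subseteq\cZ(M^\vphi)$ is legitimate. But there is a genuine gap at exactly the step you yourself flag as ``the main obstacle'': you never prove that $\rL(G_0)$ (or $\rL(G)$) is a factor. Your sketch of (a) appeals to ``uniqueness of the trace of the reduced Hecke \Cstar-algebra obtained as in Corollary~\ref{cor:simple-Hecke-algebra}'', but that corollary establishes simplicity, not uniqueness of the trace; neither the paper nor your argument proves unique trace of the Hecke algebra, and simplicity does not imply it. The paper's actual mechanism is Theorem~\ref{thm:unique-KMS-state}: every KMS-weight for the natural one-parameter group on $\Cstarred(G)$ that is finite on the projections $p_K$ is a Plancherel weight. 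Factoriality then falls out in two lines --- a nontrivial central projection $z$ would make $\vphi(z\,\cdot)$ a second such KMS-weight, contradicting faithfulness of $\vphi$ --- and this works uniformly in the non-unimodular case with no detour through traces. You have also misidentified the role of the hypothesis that some compact open subgroup is topologically finitely generated: it is not about separability of the predual, but enters through Lemma~\ref{lem:control-number-subgroups}, which produces a single compact open $L\leq K$ (the intersection of the finitely many closed subgroups of index at most $r$) satisfying $g_i^{-1}Lg_i\leq K$ for all the Powers elements $g_i$ simultaneously, so that compressing by $p_L$ does not disturb the averaged element and the KMS condition can be exploited.

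Two smaller points. First, your exclusion of type ${\rm I}$ in the unimodular non-discrete case is incorrect as stated: a type ${\rm I}_\infty$ factor $\rL(G)\subseteq\bo(\Ltwo(G))$ need not equal $\bo(\Ltwo(G))$ --- its commutant is the right regular von Neumann algebra, again type ${\rm I}_\infty$ rather than $\CC 1$ --- so no contradiction with non-commutativity of $G$ arises. The paper instead shows that the finite factor $p_K\rL(G)p_K$ is not a matrix algebra: for a suitable hyperbolic $g$ the sequence $p_Ku_{g^n}p_K$ tends to $0$ weakly but, by Lemma~\ref{lem:invertible-average} and the boundedness of $[K:K\cap g^nKg^{-n}]$, not strongly, whereas on finite type ${\rm I}$ factors the weak and strong topologies coincide. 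Second, your verification that $G_0$ satisfies condition ($*$) via limit sets is more involved than necessary; the paper simply observes that $\cN_G(K)\leq G_0$ for any compact open $K$, so the first (algebraic) formulation of ($*$) passes to $G_0$ directly.
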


We also prove non-amenability of the group von Neumann algebra associated with certain groups acting on trees.
\begin{thmstar}[Theorem \ref{thm:non-amenability}]
  Let $G$ be a group satisfying condition (*).  Further assume that some compact open subgroup of $G$ is topologically finitely generated.  Then $\rL(G)$ is not amenable.
\end{thmstar}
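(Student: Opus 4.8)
The plan is to derive non-amenability of the von Neumann algebra $\rL(G)$ from non-amenability of the group $G$, which is part of condition ($*$). The key point is that for a locally compact group, amenability of $\rL(G)$ — in the sense of Connes, i.e. injectivity — is strictly stronger than amenability of $G$ only through the failure of the converse; here we want the easy direction: if $\rL(G)$ were injective, then $G$ would be amenable. For \emph{discrete} groups this is the classical fact that $\rL(\Gamma)$ injective $\iff \Gamma$ amenable. For general locally compact $G$ one has that $\rL(G) \subseteq \bo(\Ltwo(G))$ is generated by the left regular representation, and if it is injective then there is a norm-one projection $\bo(\Ltwo(G)) \to \rL(G)$; composing with the canonical trace-like structure and the action of $G$ by right translations (which commutes with $\rL(G)$) produces a left-invariant mean on a suitable function space on $G$, forcing $G$ to be amenable. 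Thus I would first isolate and cite (or prove in a short lemma) the statement: \emph{if $G$ is a locally compact group with $\rL(G)$ amenable, then $G$ is amenable}; this is standard and goes back to work around Connes' characterisation of injectivity, in the locally compact setting it appears in the literature on injective group von Neumann algebras.

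Given that lemma, the proof is immediate: $G$ satisfies condition ($*$), hence by the first bullet of the Notation it is non-amenable, so $\rL(G)$ cannot be amenable. Note that the hypothesis that some compact open subgroup of $G$ is topologically finitely generated is \emph{not} needed for this argument per se — it is inherited from the statement of Theorem \ref{thm:factoriality} so that $\rL(G)$ is known to be a factor, which makes the word "amenable" unambiguous (for a factor, amenability = injectivity = hyperfiniteness-type conditions all coincide), but the mean-averaging argument above does not require it. So strictly speaking the proof is: invoke Theorem \ref{thm:factoriality} to know $\rL(G)$ is a factor with $\rS(\rL(G)) = \ol{\Delta(G)}$; then observe that an amenable (injective) factor which is not of type ${\rm I}$ is the hyperfinite factor of the appropriate type, but regardless, an injective $\rL(G)$ forces $G$ amenable, contradicting condition ($*$).

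The only mild subtlety — and the step I expect to need the most care — is the passage from "injective $\rL(G)$" to "amenable $G$" in the genuinely non-discrete case, because one must be careful about which space of functions on $G$ carries the invariant mean and about the role of the modular function when $G$ is non-unimodular (the left regular representation is then implemented with a $\Delta^{1/2}$ twist on $\Ltwo(G)$). The clean way around this is to use the right regular representation $\rho$ of $G$, which lies in the commutant $\rL(G)'$, together with a conditional-expectation-type projection $E\colon \bo(\Ltwo(G)) \to \rL(G)$ coming from injectivity: the functional $T \mapsto \langle E(\rho(\cdot)\, T\, \rho(\cdot)^*)\xi,\xi\rangle$ for a fixed unit vector $\xi$ can be averaged against functions $f \in \conto(G)$ to produce a state on $\Linfty(G)$ that is invariant under right translations, and a left-invariant mean on $G$ follows. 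Once that lemma is in hand, the theorem is a one-line consequence of condition ($*$), so there is no further obstacle.
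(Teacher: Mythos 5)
Your proposed key lemma --- \emph{if $G$ is a locally compact group with $\rL(G)$ amenable, then $G$ is amenable} --- is false, and this sinks the whole approach. The equivalence ``$\rL(G)$ injective $\iff G$ amenable'' is special to \emph{discrete} groups. For non-discrete groups the implication you want fails badly: every type ${\rm I}$ group has an injective (indeed type ${\rm I}$) group von Neumann algebra, and the paper's own introduction lists connected semisimple Lie groups and the full automorphism group $\Aut(T)$ of a tree as type ${\rm I}$ groups --- all of them non-amenable. More generally, Connes proved that $\rL(G)$ is injective for \emph{every} separable connected locally compact group. The reason your averaging argument cannot be repaired is visible in the discrete proof itself: there one composes the conditional expectation $E\colon \bo(\ltwo(\Gamma)) \to \rL(\Gamma)$ with the canonical \emph{tracial state} $\tau$ and uses $\tau(u_g E(M_f) u_g^*) = \tau(E(M_f))$ to get invariance of the mean $f \mapsto \tau(E(M_f))$. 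For non-discrete $G$ there is no such state; the Plancherel weight is unbounded (and not even a trace when $G$ is non-unimodular), so no invariant mean on $\Linfty(G)$ is produced. Your remark that the topological finite generation hypothesis is superfluous rests on this false lemma and should also be discarded.

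The paper's actual route reduces to the discrete case rather than trying to prove the locally compact analogue. It first establishes the criterion: if $G$ contains a compact open subgroup $L$ with $\cN_G(L)$ non-amenable, then $\rL(G)$ is non-amenable. Indeed, $H := \cN_G(L)$ is open, so there is a normal conditional expectation $\rL(G) \ra \rL(H)$, and since $L \unlhd H$ one has $p_L \rL(H) p_L \cong \rL(H/L)$ with $H/L$ a non-amenable \emph{discrete} group; now the classical discrete equivalence applies and non-amenability passes back up through the corner and the expectation. The content of the proof of Theorem \ref{thm:non-amenability} is then to produce such an $L$: using Lemma \ref{lem:many-hyperbolic-elements} one gets transverse hyperbolic elements $g_i \in G_0$ with $[K : K \cap g_i^l K g_i^{-l}]$ uniformly bounded, and the topological finite generation hypothesis enters precisely here --- via Lemma \ref{lem:control-number-subgroups} it guarantees that $K_i := \bigcap_{l} g_i^l K g_i^{-l}$ is open with bounded index, so that two distinct indices give $K_i = K_j =: L$, a compact open subgroup normalised by two transverse hyperbolic elements; a ping-pong argument then shows $\cN_G(L)$ contains a free group meeting $L$ trivially, hence is non-amenable. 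If you want to salvage your write-up, you must replace your false lemma by this reduction to a discrete quotient $H/L$; the rest of your outline (invoking condition ($*$) for non-amenability of $G$ alone) does not suffice.
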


Finally, we give concrete examples of groups satisfying the hypotheses of our work.  They arise as Schlichting completions of Baumslag-Solitar groups.
\begin{thmstar}[Theorem \ref{thm:BS-groups}]
  Let $2 \leq |m| \leq n$ and consider the relative profinite completion $\rG(m,n)$ of the Baumslag-Solitar group $\mathrm{BS}(m,n)$.  Then the following statements are true.
  \begin{itemize}
  \item $\rL(\rG(m,n))$ is a non-amenable factor.
  \item If $|m| = n$, then $\rG(m.n)$ is discrete and $\rL(G(m,n))$ is of type ${\rm II}_1$.
  \item If $|m| \neq n$, then $\rL(G(m,n))$ is of type ${\rm III}_{\left | \frac{m}{n} \right |}$.
  \item $\Cstarred(\rG(m,n))$ is simple.
  \end{itemize}
 
\end{thmstar}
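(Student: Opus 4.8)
The plan is to show that $G := \rG(m,n)$ satisfies condition~($*$) — and, where needed, that it has a topologically finitely generated compact open subgroup — and then to read off the four assertions directly from Theorems~\ref{thm:cstar-simplicity}, \ref{thm:factoriality} and~\ref{thm:non-amenability}. \emph{Step 1 (the completion).} Write $\mathrm{BS}(m,n) = \langle a,t \mid ta^mt^{-1} = a^n\rangle$. The defining relation yields $\langle a\rangle \cap t\langle a\rangle t^{-1} = \langle a^n\rangle$ and $\langle a\rangle \cap t^{-1}\langle a\rangle t = \langle a^m\rangle$, both of finite index, so $\langle a\rangle$ is commensurated; hence $\rG(m,n) = \mathrm{BS}(m,n)\sslash\langle a\rangle$ is a totally disconnected locally compact group carrying a continuous homomorphism $\iota\colon\mathrm{BS}(m,n)\to\rG(m,n)$ with dense image and with $U := \ol{\iota(\langle a\rangle)}$ a compact open subgroup. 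Since $\langle a\rangle$ is topologically monogenerated, $U$ is monothetic, hence topologically finitely generated, so the extra hypothesis of Theorems~\ref{thm:factoriality} and~\ref{thm:non-amenability} is automatic.

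\emph{Step 2 (condition~($*$)).} The group $\rG(m,n)$ acts on the Bass--Serre tree $T$ of the HNN decomposition: $VT = \mathrm{BS}(m,n)/\langle a\rangle = \rG(m,n)/U$, each vertex has valency $|m|+|n|\ge 4$, so $T$ is locally finite and thick, and since $\Aut(T)$ is closed in $\mathrm{Sym}(VT)$ while $\rG(m,n)$ is by construction the closure of $\iota(\mathrm{BS}(m,n))\subseteq\Aut(T)$, we get $\rG(m,n)\le\Aut(T)$ as a closed subgroup. From $2\le|m|,|n|$ it follows that $\mathrm{BS}(m,n)$ acts on $T$ without proper invariant subtree and contains a copy of $\freegrp{2}$; by density both properties pass to $\rG(m,n)$, giving non-amenability, minimality of the action on $\partial T$, and triviality of every compact normal subgroup. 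It remains to supply the Powers-type clause of the characterisation (Theorem~\ref{thm:characterisation-condition-star}): a compact open $K\le\rG(m,n)$ with $\cN_{\rG(m,n)}(K)/K$ containing an element of infinite order — equivalently a point $x\in\partial T$ with $\rG(m,n)_x$ open and $\rG(m,n)_x\cap\rG(m,n)_0$ containing a hyperbolic element. Because $K$ is compact and the tree action is proper, such a normalising element is forced to be hyperbolic, and then $K$ must fix its axis pointwise; so one is really asking for a locally finite thick tree on which $\rG(m,n)$ acts (minimally, as a closed subgroup) admitting a line with open pointwise stabiliser, with the normalising element choosable in $\rG(m,n)_0$ in the non-unimodular case. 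This is where $2\le|m|\le n$ enters, and it need not be witnessed by the bare Bass--Serre tree when neither of $m,n$ divides the other, so a geometric model adapted to the $t$-direction is required; I expect this step to be the main obstacle. Granting it, condition~($*$) holds.

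\emph{Step 3 (modular function and conclusion).} Since $\iota(\mathrm{BS}(m,n))$ is dense, $\Delta := \Delta_{\rG(m,n)}$ is determined on $\iota(\mathrm{BS}(m,n))$; as $a\in U$ one has $\Delta(a)=1$, while, for a Haar measure $\mu$, the commensuration indices of Step~1 give
\[
  \frac{\mu(tUt^{-1})}{\mu(U)} \;=\; \frac{[\,tUt^{-1}:U\cap tUt^{-1}\,]}{[\,U:U\cap tUt^{-1}\,]} \;=\; \frac{|m|}{n}\,,
\]
hence $\Delta(t)=\left|\frac mn\right|$ up to the usual inversion convention and $\Delta(\rG(m,n))=\left|\frac mn\right|^{\ZZ}$, which is discrete, hence closed, in $\RR_{>0}$; thus $\ol{\Delta(\rG(m,n))}=\left|\frac mn\right|^{\ZZ}$. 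Now Theorem~\ref{thm:cstar-simplicity} gives simplicity of $\Cstarred(\rG(m,n))$ (fourth bullet); Theorem~\ref{thm:factoriality} gives that $\rL(\rG(m,n))$ is a factor with $\rS(\rL(\rG(m,n)))=\ol{\Delta(\rG(m,n))}=\left|\frac mn\right|^{\ZZ}$, and Theorem~\ref{thm:non-amenability} gives non-amenability (first bullet). For the type: if $|m|=n$ then $a^n$ is central in $\mathrm{BS}(n,n)$, so $\langle a^n\rangle$ is normal of finite index in $\langle a\rangle$ — equivalently the core $\ker\iota=\langle a^n\rangle$ makes $\iota(\langle a\rangle)$ finite — so $\rG(n,n)$ is discrete (indeed $\rG(n,n)\cong(\ZZ/n)*\ZZ$), and Theorem~\ref{thm:factoriality} yields type~${\rm II}_1$ (second bullet); if $|m|\ne n$ then $\left|\frac mn\right|\in(0,1)$ and $\Delta(\rG(m,n))$ is singly generated, so Theorem~\ref{thm:factoriality} yields type~${\rm III}_{\left|\frac mn\right|}$ (third bullet).

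\emph{Main obstacle.} The crux is the Powers-type clause of condition~($*$): producing a compact open subgroup of $\rG(m,n)$ normalised by an infinite-order (necessarily hyperbolic) element — equivalently an open end-stabiliser meeting $\rG(m,n)_0$ in a hyperbolic element — which is not visible on the naive Bass--Serre tree unless one of $m,n$ divides the other, and so requires a more careful geometric realisation of $\rG(m,n)$. A secondary point requiring care is the bookkeeping of commensuration indices that pins down $\Delta$, together with the degeneration argument at $|m|=n$.
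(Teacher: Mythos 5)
Your Steps 1 and 3 and the overall reduction to Theorems \ref{thm:cstar-simplicity}, \ref{thm:factoriality} and \ref{thm:non-amenability} are exactly the paper's route, and your computation of $\Delta(\rG(m,n))=\left|\frac{m}{n}\right|^{\ZZ}$ agrees with Lemma \ref{lem:image-modular-homomorphism}. But Step 2 contains a genuine gap: you explicitly ``grant'' the one clause of condition ($*$) that actually carries the theorem, namely the existence of a compact open subgroup $K$ with $\cN_{\rG(m,n)}(K)/K$ containing an element of infinite order (equivalently, a boundary point with open stabiliser meeting $\rG(m,n)_0$ in a hyperbolic element). Without this the whole proof collapses, since every cited theorem requires condition ($*$). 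Worse, your diagnosis of the obstacle is mistaken: you assert that the clause ``need not be witnessed by the bare Bass--Serre tree when neither of $m,n$ divides the other'' and that a different geometric model is needed. It is witnessed on the Bass--Serre tree, by an explicit element. Set $g:=atat^{-1}$. Using $t^{-1}a^nt=a^m$ one computes
\begin{equation*}
  g\,a^n\,g^{-1} \;=\; a\,t\,a\,(t^{-1}a^nt)\,a^{-1}t^{-1}a^{-1} \;=\; a\,t\,a^m\,t^{-1}a^{-1} \;=\; a\,a^n\,a^{-1} \;=\; a^n\eqcomma
\end{equation*}
so $g$ centralises, hence normalises, the compact open subgroup $K:=\ol{\langle a^n\rangle}\leq\rK(m,n)$ (compact open because $[\langle a\rangle:\langle a^n\rangle]=n<\infty$). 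Moreover $\Delta(g)=\Delta(a)^2\Delta(t)\Delta(t)^{-1}=1$, so $g\in\rG(m,n)_0$, and $g$ is hyperbolic on $T$ (its cyclically reduced $t$-length is $2$, since $a\notin\langle a^n\rangle$ and $a\notin\langle a^m\rangle$ by $2\leq|m|\leq n$), so its image in $\cN_{\rG(m,n)}(K)/K$ has infinite order. By Propositions \ref{prop:characterisation-fixed-point-with-open-stabiliser} and \ref{prop:characterisation-both-fixed-points-with-open-stabiliser} this supplies the missing clause, and this is precisely how the paper closes the argument.

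Two smaller remarks. First, the paper treats $|m|=n$ separately and more elementarily: $\rG(n,n)\cong\ZZ/n\ZZ*\ZZ$ is a discrete icc non-amenable Powers group in de la Harpe's sense, which gives the type ${\rm II}_1$ factoriality and \Cstar-simplicity directly; your route through condition ($*$) with $K=\{e\}$ also works but is heavier than necessary. Second, your worry that a normalising hyperbolic element forces $K$ to fix an entire axis pointwise is correct but harmless: $\ol{\langle a^n\rangle}$ does fix the axis of $atat^{-1}$ pointwise, so no adapted geometric model is required.
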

The fact that $\rL(\rG(m,n))$ is a factor and the calculation of its type was obtained with different methods in unpublished work of the author and C.Ciobotaru.

\section*{Acknowledgements}

We want to thank Alain Valette for his hospitality at the University of Neuch{\^a}tel, where part of this work was done.  We are grateful to Pierre-Emmanuel Caprace for useful comments on groups acting on trees. We thank Siegfried Echterhoff for asking us whether \Cstar-simple groups are totally disconnected and for a helpful discussion about this question.  Further we thank Hiroshi Ando, Pierre de la Harpe, Pierre Julg and Stefaan Vaes for useful comments on the first version this article.  Finally we thank the anonymous referee his comments and for suggesting an easier proof of Theorem \ref{thm:cstar-simple-implies-td}.

\section{Preliminaries}
\label{sec:preliminaries}

\subsection{Totally disconnected groups}
\label{sec:totoally-disconnected-groups}

For a locally compact group $G$, we denote by $G_0$ the kernel of its \emph{modular function} ${\Delta:G \ra \RR_{> 0}}$ determined by $\mu(Ag) = \Delta(g) \mu(A)$ for any left Haar measure $\mu$ on $G$ and any measurable set $A \subset G$ with finite non-zero Haar measure.  By van Dantzig's theorem \cite[TG 39]{vandantzig36-topologische-algebra-iii}, a locally compact group $G$ is \emph{totally disconnected} if and only if $e \in G$ admits a neighbourhood basis of compact open subgroups.  The modular function $\Delta$ of a totally disconnected locally compact group $G$ with left Haar measure $\mu$ satisfies
\begin{equation*}
  \Delta(g)
  =
  \frac{\mu(g^{-1}(K \cap g Kg^{-1})g)}{\mu(K \cap g Kg^{-1})}
  =
  \frac{\mu(g^{-1}(K \cap g Kg^{-1})g)}{\mu(K)} \cdot 
  \frac{\mu(K)}{\mu(K \cap g Kg^{-1})}
  =
  \frac{[K: K \cap gK g^{-1}]}{[K : K \cap g^{-1} K g]}
  \in 
  \QQ
  \eqcomma
\end{equation*}
for all compact open subgroups $K \leq G$.  In particular, the modular function of a totally disconnected locally compact group takes only values in $\QQ$.

We need the following two observations, about topologically finitely generated profinite groups.
\begin{lemma}
  \label{lem:control-number-subgroups}
  Let $G$ be a topologically finitely generated group.  Then for every $r \in \NN$ there are only finitely many closed subgroups of index $r$.
\end{lemma}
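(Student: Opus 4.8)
The plan is to encode each closed subgroup of index $r$ as a continuous permutation representation of $G$ on $r$ points, and then bound the number of such representations using topological finite generation. Recall that in the present context $G$ is profinite; fix once and for all a topological generating set $g_1, \dots, g_d$ of $G$.

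First I would set up the encoding. Let $H \le G$ be a closed subgroup of index $r$. The group $G$ acts on the coset space $G/H$; choosing any bijection $\beta \colon G/H \to \{1, \dots, r\}$ with $\beta(eH) = 1$ and transporting the action along $\beta$ yields a homomorphism $\rho_H \colon G \to \mathrm{S}_r$ with $\rho_H(g)(1) = \beta(gH)$. In particular $\rho_H(g)(1) = 1$ if and only if $g \in H$, so that $H = \rho_H^{-1}\bigl(\{\sigma \in \mathrm{S}_r : \sigma(1) = 1\}\bigr)$; thus $H$ is completely recovered from $\rho_H$. The choice of $\beta$ is immaterial: we only need that $H \mapsto \rho_H$ is injective, and any fixed choice per $H$ gives that.

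Next I would verify continuity of $\rho_H$ and use it to count. The kernel of $\rho_H$ is the core $\bigcap_{x \in G} x H x^{-1}$, an intersection of closed sets hence closed, and of index at most $r!$ since $G/\ker\rho_H$ embeds into $\mathrm{S}_r$. A closed finite-index subgroup of a profinite group is open, so $\ker\rho_H$ is open and $\rho_H$ factors through the finite discrete quotient $G/\ker\rho_H$; in particular $\rho_H$ is continuous. Then $\rho_H(G)$ is the closure of the subgroup generated by $\rho_H(g_1), \dots, \rho_H(g_d)$, which, being a finite set, equals that subgroup, so $\rho_H$ is uniquely determined by the tuple $\bigl(\rho_H(g_1), \dots, \rho_H(g_d)\bigr) \in (\mathrm{S}_r)^d$. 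Combining with the previous paragraph, the assignment $H \mapsto \bigl(\rho_H(g_1), \dots, \rho_H(g_d)\bigr)$ is injective from the set of closed index-$r$ subgroups of $G$ into a set of size $(r!)^d$, so there are at most $(r!)^d < \infty$ such subgroups.

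There is no genuine obstacle in this argument; the one point demanding care is the continuity of $\rho_H$ — that is, the fact that finite-index closed subgroups of a profinite group are open — since this is precisely what forces the permutation representation to factor through a finite quotient and lets topological generation take effect.
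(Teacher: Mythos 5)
Your proof is correct and follows essentially the same route as the paper: encode each closed index-$r$ subgroup as a continuous permutation representation on its coset space and observe that such a representation is determined by the images of the topological generators, giving the bound $(r!)^d$. The only difference is that you spell out the continuity of $\rho_H$ (via closed finite-index subgroups being open), a point the paper leaves implicit; note this holds in any topological group, not just profinite ones, since the complement of a closed finite-index subgroup is a finite union of closed cosets.
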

\begin{proof}
  Let $G$ be a topological group.  If $H \leq G$ is a closed subgroup of index $n < \infty$,  then we can identify $G/H \cong \{1, \dotsc, n\}$.  We obtain a continuous homomorphism $\pi: G \ra \rS_n$ such that $H = \pi^{-1}((\rS_n)_i)$ for some $i \in \{1, \dotsc, n\}$, where $(S_n)_i$ is the stabiliser group of $i$.

  Now assume that $G$ is topologically finitely generated.  Then there are only finitely many continuous homomorphism $G \ra \rS_n$ for each $n$, since the image of $G$ is determined by the image of its generators.  Consequently, $G$ has only finitely many closed subgroups of finite index.
\end{proof}

\begin{proposition}
  \label{prop:characterisation-top-finitely-generated}
  Let $G$ be a locally compact group. If some compact open subgroup of $G$ is topologically finitely generated, then all compact open subgroups of $G$ are topologically finitely generated.
\end{proposition}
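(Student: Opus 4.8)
The plan is to reduce the statement, by intersecting compact open subgroups, to a going-up and a going-down assertion about finite-index subgroups. Let $K$ and $L$ be compact open subgroups of $G$ with $K$ topologically finitely generated. Then $K \cap L$ is open, hence of finite index in each of $K$ and $L$, since an open subgroup of a compact group has finite index. So it suffices to prove: (I) a finite-index closed subgroup of a topologically finitely generated compact group is topologically finitely generated; and (II) a compact group containing a topologically finitely generated closed subgroup of finite index is itself topologically finitely generated. Indeed, applying (I) inside $K$ shows $K \cap L$ is topologically finitely generated, and then (II) applied inside $L$ finishes the argument. Assertion (II) is immediate: if $H \le K$ is closed of finite index, topologically generated by $h_1, \dots, h_m$, and $g_1, \dots, g_r$ represent the cosets of $H$ in $K$, then $\overline{\langle h_1, \dots, h_m, g_1, \dots, g_r \rangle}$ contains $H$ together with all $g_j$, hence equals $K$.

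The content is assertion (I). Let $K$ be compact and topologically finitely generated, say $K = \overline{\langle x_1, \dots, x_d \rangle}$, and let $H \le K$ be closed of finite index; note that $H$ is automatically open, its complement being a finite union of closed cosets. Assume first that $K$ is profinite, which is the case relevant to the totally disconnected groups studied here. I would run the topological Reidemeister--Schreier argument: fix a transversal $T \ni 1$ for $H$ in $K$, write $\bar g \in T$ for the representative of $gH$, and consider the finitely many Schreier generators $\gamma_{t,i} := t x_i \, \overline{t x_i}^{\,-1} \in H$. The claim is that these topologically generate $H$. Set $L_0 := \overline{\langle \gamma_{t,i} \rangle} \le H$; to see $L_0 = H$ it suffices to check $L_0 M = H$ for $M$ ranging over a cofinal family of open subgroups of $H$, and the open subgroups of $K$ that are normal in $K$ and contained in $H$ form such a family, since for open $U \le H$ the core $\bigcap_{g \in K} g U g^{-1}$ is a finite intersection, hence open and normal in $K$, and lies in $U$. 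For such an $M$, pass to the finite group $K/M$: the image of $T$ is a transversal for $H/M$ because $M \le H$, the images of the $x_i$ generate $K/M$, and the classical Schreier subgroup lemma shows $H/M$ is generated by the corresponding Schreier generators — which are exactly the images of the $\gamma_{t,i}$. Hence $L_0 M = H$, and the claim follows. For a general compact $K$ one reduces to the profinite case through the identity component $K_0$, which is also the identity component of the open subgroup $H$: the quotient $K/K_0$ is topologically finitely generated profinite, so $H/K_0 \le K/K_0$ is topologically finitely generated by the case just treated, $K_0$ is topologically finitely generated by the structure theory of compact connected groups, and then $H$ is topologically generated by lifts of generators of $H/K_0$ together with generators of $K_0$.

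I expect the crux to be the profinite step of (I). The subtle point is not the algebraic identity — that is just Schreier's lemma — but rather ensuring that the finitely many Schreier generators generate a \emph{dense} subgroup of $H$; this forces one to run the finite computation simultaneously modulo all sufficiently small open normal subgroups of the ambient $K$, not merely of $H$, which is exactly why one passes to cores. Everything else — the reduction by intersection, the going-up assertion (II), and the splitting off of the identity component — is routine.
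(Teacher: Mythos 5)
Your proof is correct and follows the same route as the paper: both reduce the statement to the commensurability of compact open subgroups together with the fact that topological finite generation passes up and down finite-index inclusions of compact groups. The paper simply invokes this transfer as a known fact, whereas you supply a proof of the nontrivial going-down half via a topological Schreier-generator argument (using cores of open subgroups to reduce to the finite Schreier lemma), which is a valid and complete filling-in of the detail the paper omits.
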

\begin{proof}
  Assume that some compact open subgroup $K \leq G$ is topologically.  Any other compact open subgroup of $G$ is commensurated with $K$, that is any compact open subgroup $L \leq G$ satisfies $[K: K \cap L], [L: K \cap L] < \infty$.  So the proposition follows from the fact that topological finite generation passes between finite index inclusions.
\end{proof}

\subsection{Schlichting completions}
\label{sec:schlichting-completions}

Let $\Lambda \leq \Gamma$ be an inclusion of discrete groups.  It is a \emph{discrete Hecke pair} if $[\Lambda : \Lambda \cap g \Lambda g^{-1}] < \infty$ for all $g \in \Gamma$.  We define the map $\iota: \Gamma \ra \mathrm{Sym}(\Gamma/\Lambda)$ by left multiplication $\iota(g) h\Lambda  = gh\Lambda$.  Equipping $\mathrm{Sym}(\Gamma/ \Lambda)$ with the topology of pointwise convergence, we put $\Gamma \sslash \Lambda := \ol{\iota(\Gamma)}$.  This is the \emph{Schlichting completion} of the Hecke pair $\Lambda \leq \Gamma$.  It is a totally disconnected group equipped with the natural compact open subgroup $\ol{\iota(\Lambda)}$.

\subsection{Groups acting on trees}
\label{sec:groups-acting-on-trees}

Let $T$ be a locally finite tree.  Then the group of automorphisms $\Aut(T)$ equipped with the topology of pointwise convergence is a totally disconnected locally compact group.  Every vertex stabiliser is a compact open subgroup of $\Aut(T)$ and every compact subgroup of $\Aut(T)$ stabilises some vertex or some edge of $T$.

The set of one-sided infinite geodesic rays in $T$ modulo equality at some point is called the boundary $\partial T$ of $T$.  Formally, we have
\begin{equation*}
  \partial T
  =
  \{ \xi: \NN \ra T \amid \forall n \in \NN: \, \rmd(\xi(0), \xi(n)) = n\}
  /
  \xi \sim \xi' \text{ if } \exists n_0 \in \NN, m \in \ZZ \, \forall n \geq n_0: \xi(n + m) = \xi'(n)
\eqstop
\end{equation*}
For $\rho, \eta \in T$ we denote by $[\rho, \eta]$, $[\rho, \eta)$, $(\rho, \eta]$ and $(\rho, \eta)$ the set of vertices on the geodesic between $\rho$ and $\eta$, excluding or not their starting and end points.  Similarly, for $x \in \partial T$, we denote by $[\rho, x)$ and $(\rho, x)$ the set of vertices on the unique geodesic which represents $x$ and starts at $\rho$.

The boundary of a tree carries a natural topology called the \emph{shadow topology}.
\begin{definition}
  \label{def:shadow-topology}
  For two vertices $\rho \neq \eta$ of $T$ consider
  \begin{equation*}
    U_{\rho, \eta} := \{x \in \partial T \amid \eta \in [\rho, x)\}
    \eqstop
  \end{equation*}
  Then the topology generated by all $U_{\rho, \eta}$, where $(\rho, \eta)$ runs through all pairs of distinct vertices of $T$, is called the \emph{shadow topology} on $\partial T$.
\end{definition}
The action of $\Aut(T)$ on $T$ induces a continuous action by homeomorphisms on $\partial T$.  We remark that every compact subgroup $K \leq \Aut(T)$ which fixes a point in the boundary, automatically fixes a vertex of $T$.

There are 3 types of elements in $\Aut(T)$.  Elliptic elements, inversions and hyperbolic elements.  An element $g \in \Aut(T)$ is called \emph{elliptic} if it fixes a vertex of $T$.  It is an \emph{inversion}, if it does not fix a vertex of $T$, but an edge.  In all other cases, $g$ is called \emph{hyperbolic}.  Any hyperbolic element fixes exactly two points in $x,y \in \partial T$ and acts by translation along the axis $(x,y)$.  If $g$ translates in the direction of $x$, then $x$ is called the attracting fixed point of $g$.  Two hyperbolic elements are called \emph{transverse} if they have no common fixed point.

\begin{proposition}
  \label{prop:existence-minimal-subtree}
  Let $G \leq \Aut(T)$ be a closed non-amenable subgroup.  Then there is a minimal $G$-invariant subtree $T' \leq T$.  If $G$ does not contain any compact normal subgroup, then $G \grpaction{} T'$ is faithful.
\end{proposition}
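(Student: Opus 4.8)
The plan is to produce the minimal invariant subtree by a standard ``intersection of half-trees'' argument, then upgrade faithfulness via the absence of compact normal subgroups. First I would observe that since $G$ is non-amenable, $G$ cannot fix a vertex, an edge, or an end of $T$: a group fixing a vertex or edge is compact (hence amenable), and a group fixing an end acts on it through a homomorphism to the amenable group $(\RR,+)$ (the Busemann character) with compact, hence amenable, kernel — so such a $G$ would be amenable. In particular $G$ contains a hyperbolic element (this is the classical classification: a group of tree automorphisms with no fixed vertex, edge, or end contains a hyperbolic element, and in fact two transverse ones). This is the key structural input.

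Next I would define $T'$ to be the union of the axes of all hyperbolic elements of $G$ (equivalently, the smallest subtree containing two transverse hyperbolic axes already suffices to see it is nonempty and a genuine tree, i.e. connected). The set $T'$ is clearly $G$-invariant since $G$ permutes its hyperbolic elements by conjugation and $\mathrm{axis}(ghg^{-1}) = g \cdot \mathrm{axis}(h)$. Connectedness: given two hyperbolic elements $h_1, h_2$, a short ping-pong argument produces a hyperbolic element whose axis meets both $\mathrm{axis}(h_1)$ and $\mathrm{axis}(h_2)$, so any two points of $T'$ are joined by a path inside $T'$; hence $T'$ is a subtree. Minimality: any nonempty $G$-invariant subtree $S$ must contain the axis of every hyperbolic element of $G$ (a hyperbolic element leaves invariant the closed subtree $S$ and acts on it by translation, so the axis, being the union of the translates of any geodesic segment joining a point of $S$ to its image, lies in $S$), hence $S \supseteq T'$. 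So $T'$ is the unique minimal $G$-invariant subtree. One should also note $T'$ is locally finite as a subtree of $T$ and that $G$ acts on it without fixing a vertex, edge, or end, so the construction is internally consistent.

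For the faithfulness statement, suppose $G$ does not contain a non-trivial compact normal subgroup, and let $N = \ker(G \grpaction{} T')$, the pointwise stabiliser of $T'$ in $G$; this is a closed normal subgroup of $G$. I claim $N$ is compact, which then forces $N = \{e\}$. Since $G$ is closed in $\Aut(T)$ and $\Aut(T)$ is totally disconnected locally compact, it suffices to show $N$ is contained in a vertex stabiliser of $T$, because vertex stabilisers are compact (stated in Section~\ref{sec:groups-acting-on-trees}) and $N$, being closed in a compact group, is compact. Pick any vertex $v \in T'$ and a geodesic ray in $T$ from $v$ to some vertex $w$; since $T'$ is a subtree of $T$ on which $G$ acts cofinitely (more precisely, $T'$ has at least two ends, hence contains a bi-infinite geodesic), an element of $N$ fixes $T'$ pointwise and therefore fixes the end(s) of $T'$ that it ``points to''; but an element of $\Aut(T)$ fixing two distinct vertices of $T'$ and fixing all of $T'$ pointwise in particular fixes a bi-infinite geodesic in $T$ pointwise, and the stabiliser of a bi-infinite geodesic pointwise is compact (it embeds into an infinite product of finite vertex-link permutation groups, and is closed). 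Hence $N$ is compact and normal, so $N = \{e\}$, i.e. $G \grpaction{} T'$ is faithful.

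The main obstacle I anticipate is the precise verification that the pointwise stabiliser $N$ of $T'$ is compact; the rest is routine Bass--Serre / Tits classification of tree automorphisms. The subtlety is that ``fixing $T'$ pointwise'' must be leveraged into ``fixing enough of the ambient $T$'' to land inside a compact subgroup of $\Aut(T)$ — which works because $T'$ contains a bi-infinite geodesic (as $G$ is non-amenable it has two transverse hyperbolic elements, whose configuration produces such a geodesic in $T'$), and the pointwise stabiliser of a bi-infinite geodesic in a locally finite tree is compact. Once that is in place, the no-compact-normal-subgroup hypothesis finishes it immediately.
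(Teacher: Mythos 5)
Your proof is correct and follows essentially the same route as the paper: non-amenability yields hyperbolic elements, the smallest subtree containing all their axes is the minimal $G$-invariant subtree, and the kernel of the action on it is a compact normal subgroup (it fixes a vertex, hence sits inside a compact vertex stabiliser), so it is trivial by hypothesis. The only harmless imprecision is your parenthetical claim that the kernel of the Busemann character of a group fixing an end is compact --- in general it is only an increasing union of compact subgroups, but that is still amenable, so your deduction stands.
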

\begin{proof}
  Since $G$ is non-amenable, it contains some hyperbolic element.  The smallest tree $T'$ containing all axes of hyperbolic elements in $G$, is $G$-invariant.  Further every $G$-invariant subtree contains $T'$.  Next observe that the kernel of the restriction map $G \mapsto \Aut(T')$ is $G_{T'}$, which is a compact subgroup of $G$.  It follows that $G_{T'}$ is trivial and hence $G \grpaction{} T'$ is faithful if $G$ does not contain any compact normal subgroup.
\end{proof}

\begin{proposition}
  \label{prop:equivalence-minimal-action}
  Let $G \leq \Aut(T)$ be a non-amenable subgroup.  If $G \grpaction{} T$ admits no proper invariant subtree then $G \grpaction{} \partial T$ is minimal.  Vice versa, if $G$ is not compact and $G \grpaction{} \partial T$ is minimal, then $T$ admits no proper invariant subtree.
\end{proposition}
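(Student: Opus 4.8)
The plan is to prove the two implications separately, reducing first to the case that $G$ is closed. For that, note that the $\Aut(T)$-stabiliser of a subtree of $T$ and the $\Aut(T)$-stabiliser of a closed subset of $\partial T$ are both closed subgroups, so a subtree of $T$, resp.\ a closed subset of $\partial T$, is $G$-invariant if and only if it is $\ol G$-invariant; since moreover a non-amenable group is automatically non-compact and $\ol G$ is non-amenable whenever $G$ is, all hypotheses and both conclusions of the proposition are unaffected by replacing $G$ with $\ol G$, and I may assume $G = \ol G$. I would then record the elementary tools I intend to use: (a) \emph{north--south dynamics} of a hyperbolic $g \in \Aut(T)$ on $\partial T$, that is, $g^n\xi \to g_+$ in the shadow topology for every $\xi \in \partial T \setminus \{g_-\}$, where $g_+$, resp.\ $g_-$, is the attracting, resp.\ repelling, fixed end of $g$ -- an immediate consequence of the fact, recalled in Section~\ref{sec:groups-acting-on-trees}, that $g$ translates along its axis; (b) that a non-amenable subgroup of $\Aut(T)$ contains a hyperbolic element (as already used in the proof of Proposition~\ref{prop:existence-minimal-subtree}) and fixes no point of $\partial T$, stabilisers of ends in $\Aut(T)$ being amenable; and (c) that for a closed $C \subseteq \partial T$ with $|C| \geq 2$, the convex hull $\conv(C) = \bigcup_{\xi \neq \eta \in C} (\xi,\eta)$ (the union of the bi-infinite geodesics joining pairs of points of $C$) is a subtree of $T$ with $\partial\conv(C) = C$; here only $\partial\conv(C) \subseteq C$ needs an argument, and it follows because each edge of a ray in $\conv(C)$ towards an end $\omega$ lies on a bi-infinite geodesic with both endpoints in $C$, and the endpoint on the far side of that edge lies with $\omega$ in a shadow of that edge, hence converges to $\omega$ along the ray, giving $\omega \in \ol C = C$.

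For the forward implication I would argue as follows. Assuming $G \grpaction{} T$ has no proper invariant subtree, let $C \subseteq \partial T$ be non-empty, closed and $G$-invariant; the goal is $C = \partial T$. Fix a hyperbolic $g \in G$ with fixed ends $g_\pm$. By (b), $G$ fixes no end, so $|C| \geq 2$; of two distinct points of $C$ at most one is $g_-$, and applying (a) to the other produces a sequence in $C$ converging to $g_+$, so $g_+ \in C$; applying (a) to $g^{-1}$ (whose attracting end is $g_-$) at a point of $C$ different from $g_+$ gives, symmetrically, $g_- \in C$. Thus $\{g_+,g_-\} \subseteq C$, so $\conv(C)$ contains the axis of $g$ and is a non-empty $G$-invariant subtree of $T$; by hypothesis $\conv(C) = T$, and then (c) gives $C = \partial\conv(C) = \partial T$. (Alternatively one could run this through Proposition~\ref{prop:existence-minimal-subtree} together with density of the endpoints of hyperbolic axes in $\partial T$, but the convex-hull route is self-contained.)

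For the converse I would assume $G$ non-compact and $G \grpaction{} \partial T$ minimal, and suppose towards a contradiction that $T' \subsetneq T$ is a non-empty $G$-invariant subtree. Then $\partial T'$ is a closed $G$-invariant subset of $\partial T$, so minimality leaves only $\partial T' = \emptyset$ or $\partial T' = \partial T$. If $\partial T' = \emptyset$, König's lemma makes the locally finite subtree $T'$ finite, so $G$ fixes its centre -- a vertex or an edge -- and is contained in a compact vertex- or edge-stabiliser, contradicting non-compactness; so this case cannot occur. If $\partial T' = \partial T$, then $T'$, being a subtree containing a ray to each end, contains $\conv(\partial T)$, and $\conv(\partial T) = T$ because every edge of $T$ lies on a bi-infinite geodesic; hence $T' = T$, contradicting properness.

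I expect the single delicate point to be the last equality $\conv(\partial T) = T$: this uses that $T$ has no vertex of degree one (equivalently, that $T$ equals its own convex hull), which is automatic when $T$ is thick -- as it is in all applications in this paper -- but cannot be dropped in general; for instance, attaching a pendant edge to every vertex of a regular tree produces a tree with the same boundary and a proper invariant subtree on which the action is still minimal. Everything else is the routine tree geometry of (a)--(c) together with the standard facts cited in (b) and the basic structure of finite and locally finite trees.
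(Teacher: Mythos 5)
Your argument is correct in substance, and while it rests on the same underlying mechanism as the paper's proof --- passing between invariant subtrees and invariant subsets of $\partial T$ via convex hulls of boundary sets --- it organises both directions differently and more carefully. For the forward implication the paper argues by contraposition, taking $T' = \bigcup_{x,y \in \cU}(x,y)$ for the invariant \emph{open} set $\cU$ complementary to a proper closed invariant set and asserting that $T'$ is proper because $\partial T \setminus \cU$ has non-empty interior (a point it does not fully justify); you instead take the convex hull of the closed invariant set $C$ itself, use north--south dynamics of a hyperbolic element (in the spirit of Proposition~\ref{prop:contractive}) to force $g_+, g_- \in C$ so that $\conv(C)$ is a non-empty invariant subtree, and recover $C = \partial T$ from the identity $\partial\conv(C) = C$, which you actually prove. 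This makes explicit both where non-amenability enters (through Proposition~\ref{prop:adams-ballmann}, to exclude $|C| \leq 1$ and to produce a hyperbolic element --- the paper never invokes it) and the boundary computation the paper leaves implicit. For the converse, the paper works directly with a shadow $U_{\rho,\eta}$ based outside $T'$, while you apply minimality to $\partial T'$ and dispose of the two cases by K\"onig's lemma and by $\conv(\partial T) = T$; the difficulty is the same in both, and your observation that this last equality --- like the paper's steps ``there is some neighbouring vertex $\rho \sim \eta \in T \setminus T'$'' and ``$U_{\rho,\eta} \neq \emptyset$'' --- fails for trees with finite branches is a genuine one: the statement is false for a regular tree with pendant edges attached, and thickness or minimality of $T$ is what rescues every application in the paper.

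The one step you should not assert as automatic is the reduction to $G$ closed. Invariance of subtrees and of closed subsets of $\partial T$ does pass between $G$ and $\ol{G}$, but ``$\ol{G}$ is non-amenable whenever $G$ is'' and ``non-amenable implies non-compact'' both depend on what amenability means for a non-closed subgroup. If it means amenability of the topological group $G$ with the subspace topology (equivalently, amenability of $\ol{G}$, since means on right uniformly continuous functions transfer between a group and its dense subgroups), your reduction is sound. If it means amenability of the abstract group, both claims fail: a compact group may contain non-abelian free subgroups, and such a subgroup is non-amenable as an abstract group, non-compact because non-closed, yet has compact --- hence amenable --- closure, so no reduction can succeed and the second assertion of the proposition itself becomes problematic. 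The paper makes the same tacit assumption (its step ``since $G$ is not compact, $T'$ is infinite'' equally requires $G$ closed, or the hypotheses to be read on $\ol{G}$), and every application is to a closed subgroup, so this is a defect of the statement rather than of your proof; but you should say explicitly which reading of the hypotheses your reduction uses.
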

\begin{proof}
  If $G \grpaction{} \partial T$ is not minimal, then there is some $G$-invariant open set $\cU\subset \partial T$ such that $\partial T \setminus \cU$ contains an open subset.  Let
  \begin{equation*}
    T' = \bigcup_{x,y \in \cU} (x,y) \subset T
  \end{equation*}
  be the subtree consisting of all vertices on geodesics joining points in $\cU$.  Then $T' \neq T$, since $\partial T \setminus \cU$ contains an open set.  So we have found a proper $G$-invariant subtree $T' \subset T$

  Now assume that $G$ is not compact and there is a $G$-invariant subtree $T' \leq T$.  Since $G$ is not compact, $T'$ is infinite.  So it contains at least one infinite geodesic ray.   Let $\rho \in T \setminus T'$ be some vertex.  Since $T'$ is convex, there is some neighbouring vertex $\rho \sim \eta \in T \setminus T'$.  So the open set $\cO = U_{\rho, \eta} \subset \partial T$ is not empty.  If $x \in \partial T$ is the endpoint of some geodesic ray in $T'$, then $Gx \cap \cO = \emptyset$.  So the orbit of $x$ is not dense.
\end{proof}

We want to have some control over the action on the boundary $\partial T$ of vertex stabilisers in $\Aut(T)$.  To this end, we make the following definition of the meet of two boundary points with respect to a fixed vertex in $T$.
\begin{notation}
  \label{not:control-shadow-topology}
  Fix a vertex $\rho \in T$.  Then for $x,y \in \partial T$ we set
  \begin{equation*}
    \rmm_\rho(x,y) := \max \{ \rmd(\rho, \eta) \amid \eta \in [\rho, x) \cap [\rho, y)\} \in \NN \cup \{+ \infty\}
    \eqstop
  \end{equation*}
\end{notation}

\begin{remark}
  \label{rem:control-shadow-topology}
  \begin{itemize}
  \item Fix $x \in \partial T$ and $\rho \in T$.  Then a basis of compact open neighbourhoods of $x$ in the shadow topology is given by the sets $\cU_x(n) := \{ y \in \partial T \amid \rmm_\rho(x,y) \geq n\}$, $n \in \NN$.
  \item   The vertex stabiliser $\Aut(T)_\rho$ leaves $\rmm_\rho$ invariant, i.e. for all $x,y \in \partial T$ and all $k \in \Aut(T)_\rho$ we have $\rmm_\rho(kx,ky) = \rmm_\rho(x,y)$.
  \end{itemize}
\end{remark}

We can now quantify the dynamics of a hyperbolic element close to its fixed points.
\begin{proposition}
  \label{prop:contractive}
  Let $g \in \Aut(T)$ be a hyperbolic element fixing a point $x \in \partial T$.  Then for all $\rho \in T$ there is $d \in \NN$ such that for all $y \in \partial T \setminus \{x\}$ with $\rmm_\rho(x,y) \geq d$ we have  
  \begin{equation*}
    \rmm_\rho(x,y) \neq \rmm_\rho(x,gy)
    \eqstop
  \end{equation*}
\end{proposition}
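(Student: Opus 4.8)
The plan is to read everything off the axis of the hyperbolic element $g$. Write $A \subset T$ for that axis: it is a bi‑infinite geodesic line one of whose two endpoints is $x$, and we let $x'$ be the other endpoint and $\ell \geq 1$ the (integer) translation length of $g$ along $A$. Fix $\rho \in T$, let $p$ be the projection of $\rho$ onto $A$ (the vertex of $A$ nearest $\rho$), and set $m_0 := \rmd(\rho, p) = \rmd(\rho, A)$. I claim $d := m_0 + 1$ works.

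The first step is to pin down the position of $\rho$ relative to $A$. Since $p$ is the projection, the ray $[\rho, x)$ is the concatenation of the bridge $[\rho, p]$ with the sub‑ray of $A$ from $p$ towards $x$. Hence if $y \in \partial T \setminus \{x\}$ has $m := \rmm_\rho(x, y) \geq d$, the common vertex $\eta_y \in [\rho, x) \cap [\rho, y)$ at distance $m$ from $\rho$ lies on $A$, on the $x$‑side of $p$, at distance $m - m_0 \geq 1$ from $p$. Moreover $[\rho, y)$ leaves $A$ at $\eta_y$: it cannot continue towards $x$ (that is the direction of $[\rho, x)$, from which it has just diverged) nor turn back towards $x'$ (that would backtrack), and a geodesic ray meets the geodesic line $A$ in a single sub‑segment, hence never returns once it has left. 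So $\eta_y$ is precisely the vertex of $A$ at which the geodesic from $A$ to $y$ exits; call it $\pi(y)$.

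The second step uses that $g$ stabilises $A$ and translates it by $\ell$, so $\pi(gy) = g\,\pi(y) = g\eta_y$ is the vertex of $A$ at distance $m - m_0 \pm \ell$ from $p$ — on the $x$‑side if $x$ is the attracting fixed point of $g$, on the $x'$‑side if it is the repelling one. Running the same decomposition for $gy$, the geodesic $[\rho, gy)$ is the bridge $[\rho, p]$ followed by the arc of $A$ from $p$ to $g\eta_y$ followed by $[g\eta_y, gy)$ (with the obvious degeneration when $g\eta_y = p$). Comparing with $[\rho, x)$ gives $\rmm_\rho(x, gy) = m_0 + k$ when $g\eta_y$ lies on the $x$‑side of $p$ at distance $k \geq 1$, and $\rmm_\rho(x, gy) \leq m_0$ otherwise. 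Plugging in: in the attracting case $k = (m - m_0) + \ell$, so $\rmm_\rho(x, gy) = m + \ell$; in the repelling case $\rmm_\rho(x, gy) = m - \ell$ when $m > m_0 + \ell$ and $\rmm_\rho(x, gy) \leq m_0$ when $m_0 < m \leq m_0 + \ell$. Since $\ell \geq 1$ and $m > m_0$, in every case $\rmm_\rho(x, gy) \neq m = \rmm_\rho(x, y)$, as desired.

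The only genuine difficulty is the bookkeeping about where $g\eta_y$ sits on $A$ relative to $p$, and in particular the case — which occurs only when $g$ translates away from $x$ — in which $g\eta_y$ is pushed onto $p$ or past it towards $x'$; this is exactly why the threshold must satisfy $d > \rmd(\rho, A)$ rather than merely $d = \rmd(\rho, A)$. Once the picture (the bridge $[\rho, p]$ together with the exit vertex $\pi(y)$ of $y$ from $A$) is in place, every step is routine, and no use of local finiteness or thickness of $T$ is needed.
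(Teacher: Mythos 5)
Your proof is correct and follows essentially the same route as the paper's: both locate the splitting vertex of $[\rho,y)$ and $[\rho,x)$ on the axis of $g$ strictly beyond the projection $p$ of $\rho$ onto that axis (the paper's $\eta$ is exactly your $p$, so the thresholds $d$ coincide) and then observe that $g$ translates this vertex along the axis, changing its distance to $\rho$. The only difference is that the paper reduces to the attracting case by replacing $g$ with $g^{-1}$ and gets a strict inequality at once, whereas you keep both translation directions and handle explicitly the case where $g$ pushes the splitting vertex onto or past $p$.
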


\begin{proof}
  Replacing $g$ by its inverse, we may assume that $x$ is its attracting fixed point.  Denote by $x'$ the repelling fixed point of $g$ and by $\eta$ the vertex of $T$ in which the geodesics $[\rho, x)$ and $[\rho, x')$ split.

  \setlength{\unitlength}{1em}
  \begin{picture}(20,16)
    \put(5,0){\includegraphics[width=15em]{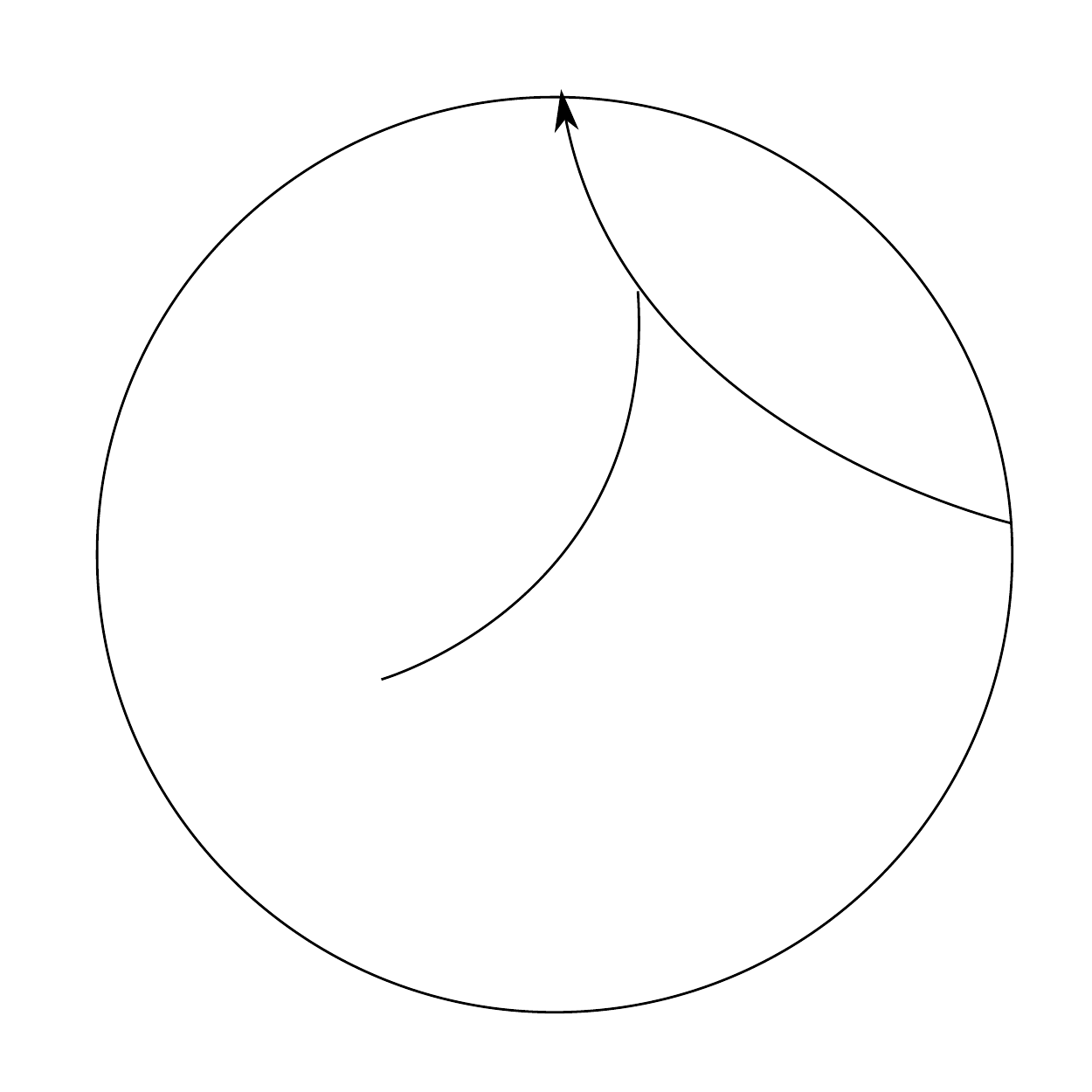}}
    \put(12,14){$x$}
    \put(19.5,8){$x'$}
    \put(14,11){$\eta$}
    \put(9.5,6){$\rho$}
  \end{picture}

  Let $d = \rmd(\rho, \eta) + 1$ and take $y \in \partial T \setminus \{x\}$ such that $\rmm_\rho(x,y) \geq d$. Then the geodesic $[\rho, y)$ passes through $\eta$ in the same direction as $[\rho, x)$, meaning that $[\rho, y) \cap (\eta, x) \neq \emptyset$.  Denote by $\xi$ the vertex in which the geodesic $[\rho, y)$ and $[\rho, x)$ split.  Since $\xi \in (\eta, x) \subset (x, x')$ lies on the axis of $g$, and $x$ is the attracting fixed point of $g$, we have 
  \begin{equation*}
    \rmd(\rho, g \xi)
    =
    \rmd(\rho, \eta) + \rmd(\eta, g \xi)
    >
    \rmd(\rho, \eta) + \rmd(\eta, \xi)
    =
    \rmd(\rho, \xi)    
    \eqstop
  \end{equation*}
  Since the geodesics $[\rho, gy)$ and $[\rho, x)$ split in $g\xi$, we obtain $\rmm_\rho(x,gy) = \rmd(\rho, g\xi) > \rmd(\rho, \xi) = \rmm_\rho(x,y)$.  This finishes the proof of the proposition.

\end{proof}

The next lemma gives us many invariant neighbourhoods of points in $\partial T$.
\begin{lemma}
  \label{lem:compacts-fix-open-neighbourhoods}
  Let $K \leq \Aut(T)$ be a compact subgroup fixing $x \in \partial T$.  Then there is a basis of $K$-invariant neighbourhoods of $x$.
\end{lemma}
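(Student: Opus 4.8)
The plan is to reduce everything to the already-established invariance of the ``meet'' function $\rmm_\rho$ under vertex stabilisers. The starting observation is that a compact subgroup of $\Aut(T)$ fixing a boundary point fixes a vertex: this is exactly the remark made just after Definition~\ref{def:shadow-topology}. So I would begin by choosing a vertex $\rho \in T$ with $K \leq \Aut(T)_\rho$.

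\textbf{Key steps.} First, pick such a fixed vertex $\rho$, so that $K$ is contained in the vertex stabiliser $\Aut(T)_\rho$. Second, invoke the first bullet of Remark~\ref{rem:control-shadow-topology}: the sets $\cU_x(n) = \{ y \in \partial T \amid \rmm_\rho(x,y) \geq n\}$, for $n \in \NN$, form a basis of (compact open) neighbourhoods of $x$ in the shadow topology. Third, invoke the second bullet of the same remark: since $K \leq \Aut(T)_\rho$, every $k \in K$ satisfies $\rmm_\rho(kx, ky) = \rmm_\rho(x,y)$, and because $k x = x$ this gives $\rmm_\rho(x, ky) = \rmm_\rho(x,y)$ for all $y \in \partial T$. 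Consequently $k \cU_x(n) = \cU_x(n)$ for every $k \in K$ and every $n$, so each $\cU_x(n)$ is a $K$-invariant neighbourhood of $x$. Taking the family $\{\cU_x(n)\}_{n \in \NN}$ then yields the desired basis of $K$-invariant neighbourhoods.

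\textbf{Main obstacle.} There is essentially no obstacle here beyond assembling the two pieces: the only input that is not purely formal is the fact that a compact group fixing a boundary point fixes a vertex, which the paper has already recorded and which itself follows from the standard fact that compact subgroups of $\Aut(T)$ stabilise a vertex or an edge (and an edge stabiliser has a finite-index vertex-fixing subgroup, forcing a genuinely fixed vertex once a boundary point is also fixed). If one wanted to be careful about the edge-fixing case, that is the one spot requiring a sentence of justification; otherwise the argument is immediate from Remark~\ref{rem:control-shadow-topology}.
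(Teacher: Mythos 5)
Your proposal is correct and follows essentially the same route as the paper: fix a vertex $\rho$ using the remark that a compact group fixing a boundary point fixes a vertex, and then observe that the basic neighbourhoods $\cU_x(n)$ are $K$-invariant because $K$ fixes both $\rho$ and $x$ (the paper phrases this as $K$ fixing the geodesic $[\rho,x)$ pointwise, you phrase it via the $\Aut(T)_\rho$-invariance of $\rmm_\rho$ — the same fact). Your extra sentence on the edge-stabiliser case is a reasonable precaution but not needed beyond what the paper already records.
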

\begin{proof}
  Since $K$ is compact there is some $\rho \in T$ fixed by $K$.  Then $K$ fixes the geodesic $[\rho, x)$ pointwise.  So $K$ fixes elements of the neighbourhood basis of $x$
  \begin{equation*}
    \cU_x(n) = \{y \in \partial T \amid \rmm_\rho(x,y) \geq n\}
    \eqcomma
    n \in \NN
    \eqstop
  \end{equation*}
\end{proof}

\subsection{\Cstar-algebras and von Neumann algebras}
\label{sec:operator-algebras}

A \emph{\Cstar-algebra} is a Banach algebra $A$ such that $\|x^*x\| = \|x\|^2$ for all $x \in A$.  It is called simple if every closed *-ideal in $A$ is either trivial or equals $A$.

Denote by $\bo(H)$ the *-algebra of bounded operators on a Hilbert space $H$.  The topology of pointwise convergence on $\bo(H)$ is called \emph{strong topology}.  A \emph{von Neumann algebra} is a strongly closed unital *-subalgebra of $\bo(H)$.  Note that every von Neumann algebra is also a \Cstar-algebra.   Throughout the text we assume all von Neumann algebras to act on a separable Hilbert space.  A von Neumann algebra $M$ is simple if every strongly closed *-ideal of $M$ is either trivial or equals $M$.  Simple von Neumann algebras are called \emph{factors}.

Let $A$ be a \Cstar-algebra.  A \emph{projection} in $A$ is an element $p \in A$ satisfying $p = p^2 = p^*$.  If $p, q$ are projections, then we write $p \leq q$ in case $pq = p$.  A \emph{state} on $A$ is a unital functional $\vphi: A \ra \CC$ such that $\vphi(x^*x) \geq 0$ for all $x \in A$.  A state is \emph{tracial} if $\vphi(xy) = \vphi(yx)$ for all $x,y \in A$.

If $A \subset \bo(H)$ is a \Cstar-algebra, then its \emph{multiplier algebra}
\begin{equation*}
  \rM(A) := \{x \in \bo(H) \amid \forall y \in A: yx, xy \in A\}
\end{equation*}
is a \Cstar-algebra, independent of the representation of $A$ in $\bo(H)$.  It carries the \emph{strict topology} defined on nets by $x_\lambda \ra x$ if and only if $\|x_\lambda y - xy\|, \|y x_\lambda - yx\| \ra 0$ for all $y \in A$.  Since every \Cstar-algebra contains an approximate unit, it is a strictly dense two-sided ideal in its multiplier algebra.

\subsubsection{The type of a von Neumann algebra}
\label{sec:type}

Before defining the type of a von Neumann algebra, let us remark that every von Neumann algebra is the norm closure of the linear span of its projections.  So von Neumann algebras contain an abundance of projections.  The type of a von Neumann algebra depends on how its projections behave.
\begin{definition}
  \label{def:type}
  A factor $M$ is called finite if it admits a a tracial state; a projection $p \in M$ is finite if $pMp$ is finite.
  \begin{itemize}
  \item $M$ is of type ${\rm I}$ if it contains a minimal projection.
  \item $M$ is of type ${\rm II}$ if it contain a finite by not a minimal projection.
  \item In all other cases $M$ is of type ${\rm III}$.
  \end{itemize}
\end{definition}

In \cite{connes73-type-III} Connes introduced the invariant $\rS(M) \subset \RR_{\geq 0}$ of a factor $M$. He proved that $0 \notin \rS(M)$ if and only if $M$ is of type ${\rm I}$ or ${\rm II}$.  Further, $\rS(M) \cap \RR_{> 0}$ is a closed subgroup.  He then proceeds to the following definition.
\begin{definition}
  \label{def:type-III}
  Let $M$ be a factor of type ${\rm III}$.  If $\rS(M) = \{0,1\}$, then $M$ is of type ${\rm III}_0$.  If $\rS(M) = \lambda^\ZZ \cup \{0\}$ for some $\lambda \in (0,1)$, then we say that $M$ is of type ${\rm III}_\lambda$.  If $\rS(M) = \RR_{\geq 0}$, then $M$ is of type ${\rm III}_1$.  
\end{definition}

We need the following theorem to calculate the type of group von Neumann algebras appearing in this article.  We refer to Section \ref{sec:weight-theory} for a brief introduction to weights on von Neumann algebras. If $\vphi$ is a normal semi-finite faithful weight $\vphi$ on a von Neumann algebra $M$, we denote by $\Delta_\vphi$ its modular operator, by $(\sigma_t^\vphi)_t = (\Ad \Delta_\vphi^{it})_t$ the modular flow on $M$ and by $M^\vphi = \{ x \in M \amid \forall t: \, \sigma_t^\vphi(x) = x\}$ the fixed point algebra of the modular flow.  The spectrum of $\Delta_\vphi$ is denoted by $\sigma(\Delta_\vphi)$.
\begin{theorem}[{\cite[Corollary 3.2.7]{connes73-type-III}}]
  \label{thm:calculate-S}
  Let $M$ be a factor with a normal semi-finite faithful weight $\vphi$.  If $M^\vphi$ is a factor, then $\rS(M) = \sigma(\Delta_\vphi)$.
\end{theorem}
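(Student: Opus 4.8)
The inclusion $\rS(M) \subseteq \sigma(\Delta_\vphi)$ is immediate from the definition of $\rS(M)$ as the intersection of $\sigma(\Delta_\psi)$ over \emph{all} normal semi-finite faithful weights $\psi$ on $M$, of which $\vphi$ is one; the entire content is the reverse inclusion, and the plan is to pass to the additive picture carried by the modular flow $\sigma^\vphi_t = \Ad \Delta_\vphi^{it}$. Write $\operatorname{Sp}(\sigma^\vphi) \subseteq \RR$ for the Arveson spectrum of this one-parameter automorphism group and $\Gamma(\sigma^\vphi) = \bigcap_{0 \neq e} \operatorname{Sp}(\sigma^\vphi|_{eMe})$ for its Connes spectrum, the intersection running over non-zero projections $e$ of the centraliser $M^\vphi$. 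Two results from \cite{connes73-type-III} will be used as input. First, Tomita--Takesaki theory gives $\sigma(\Delta_\vphi) \cap \RR_{>0} = \exp \operatorname{Sp}(\sigma^\vphi)$. Second, by the cocycle Radon--Nikodym theorem the Connes spectrum $\Gamma(\sigma^\vphi)$ is independent of the weight, and $\rS(M) \cap \RR_{>0} = \exp \Gamma(\sigma^\vphi)$, with the dichotomy that $0 \in \rS(M)$ precisely when $M$ is of type ${\rm III}$. Granting these, the inclusion $\sigma(\Delta_\vphi) \cap \RR_{>0} \subseteq \rS(M)$ reduces to the single claim that, since $M^\vphi$ is a factor,
\[
  \operatorname{Sp}(\sigma^\vphi|_{eMe}) = \operatorname{Sp}(\sigma^\vphi) \qquad \text{for every non-zero projection } e \in M^\vphi ,
\]
because this forces $\Gamma(\sigma^\vphi) = \operatorname{Sp}(\sigma^\vphi)$, hence $\rS(M) \cap \RR_{>0} = \sigma(\Delta_\vphi) \cap \RR_{>0}$.

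The inclusion ``$\subseteq$'' in the displayed claim is formal: $eMe$ is a $\sigma^\vphi$-invariant subalgebra, so a non-zero element of $eMe$ with small $\sigma^\vphi$-spectrum is also such an element of $M$. For ``$\supseteq$'', which is where factoriality of $M^\vphi$ enters, fix $p \in \operatorname{Sp}(\sigma^\vphi)$ and $\veps > 0$ and choose a non-zero $x \in M$ whose $\sigma^\vphi$-spectrum lies in $(p - \veps, p + \veps)$. As $M^\vphi$ is a factor and $e \neq 0$, the comparison theorem lets $e$ tile the identity inside $M^\vphi$: there are partial isometries $(w_i)_i$ in $M^\vphi$ with $w_i^* w_i \leq e$ and $\sum_i w_i w_i^* = 1$. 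Writing $x = \sum_{i,j} w_i (w_i^* x w_j) w_j^*$ and using $x \neq 0$, some $y := w_i^* x w_j$ is non-zero; from $w_i^* w_i, w_j^* w_j \leq e$ one gets $e w_i^* = w_i^*$ and $w_j e = w_j$, so $y \in eMe$. Since $w_i, w_j$ are fixed by $\sigma^\vphi$, the $\sigma^\vphi$-spectrum of $y$ is contained in that of $x$, hence in $(p - \veps, p + \veps)$. Therefore $p \in \operatorname{Sp}(\sigma^\vphi|_{eMe})$, proving the claim.

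It remains to match the point $0$, that is, to see that $0 \in \sigma(\Delta_\vphi)$ if and only if $0 \in \rS(M)$, equivalently (by the dichotomy above) if and only if $M$ is of type ${\rm III}$. If $M$ is of type ${\rm III}$ then $0 \in \rS(M) \subseteq \sigma(\Delta_\vphi)$. If $M$ is semi-finite, fix a normal semi-finite faithful trace $\tau$ and write $\vphi = \tau(h \,\cdot\,)$ with $h$ a non-singular positive operator affiliated with $M$; then $\sigma^\vphi_t = \Ad h^{it}$ and $M^\vphi = \{h\}' \cap M$ contains $h$ in its centre, so factoriality of $M^\vphi$ forces $h$ to be a scalar. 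Thus $\vphi$ is proportional to $\tau$, $\Delta_\vphi = 1$, and $\sigma(\Delta_\vphi) = \{1\} = \rS(M)$; in particular $0$ lies in neither set. Together with the previous paragraph this yields $\rS(M) = \sigma(\Delta_\vphi)$.

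The genuinely difficult part is not the tiling argument, which is short, but the clean import of the two quoted facts: the identity $\rS(M) \cap \RR_{>0} = \exp \Gamma(\sigma^\vphi)$ together with the weight-independence of the Connes spectrum is a substantial portion of Connes' classification of type ${\rm III}$ factors, resting on the cocycle Radon--Nikodym theorem and an analysis of the balanced weight $\vphi \oplus \psi$ on $\mathrm{M}_2(M)$ for a second weight $\psi$. A fully self-contained proof would reproduce a good part of \cite{connes73-type-III}; since the statement is quoted there verbatim as a corollary, it is appropriate to take that machinery as given and to isolate, as above, the single additional ingredient that makes the corollary work---the invariance of the modular spectrum under passing to a corner of the centraliser.
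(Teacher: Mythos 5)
The paper offers no proof of this statement --- it is quoted verbatim from Connes --- so the comparison is with the source: your argument is a faithful reconstruction of how Corollary 3.2.7 is derived there, namely by quoting $\rS(M)\cap\RR_{>0}=\exp\Gamma(\sigma^\vphi)$ (Connes' Th\'eor\`eme 3.2.1, resting on the cocycle Radon--Nikodym theorem) and then showing that factoriality of $M^\vphi$ forces $\operatorname{Sp}(\sigma^\vphi|_{eMe})=\operatorname{Sp}(\sigma^\vphi)$ for every non-zero projection $e\in M^\vphi$, by tiling $1$ with partial isometries of $M^\vphi$ whose initial projections lie under $e$. The tiling step, the spectral estimate $\operatorname{Sp}_{\sigma^\vphi}(w_i^*xw_j)\subseteq\operatorname{Sp}_{\sigma^\vphi}(x)$ for $w_i,w_j\in M^\vphi$, and the separate treatment of the point $0$ via the semi-finite versus type ${\rm III}$ dichotomy are all correct.
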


\subsection{Group \Cstar-algebras and group von Neumann algebras}
\label{sec:group-operator-algebras}

Let $G$ be a locally compact group and denote by $\lambda_G: G \ra \cU(\Ltwo(G))$ the left regular representation of $G$.  Then the \emph{group von Neumann algebra} of $G$ is $\rL(G) = \lambda_G(G)''$.  The canonical unitaries $\lambda_G(g)$, $g \in G$ are denoted $u_g \in \rL(G)$.   Fixing a left Haar measure $\mu$ of $G$, we define $\lambda_G:\contc(G) \ra \bo(\Ltwo(G))$ by 
\begin{equation*}
  \lambda_G(f) \xi(g) = \int_G f(h) \xi(h^{-1}g) \rmd \mu(h)
\end{equation*}
for all $f, g \in \contc(G)$.  The \emph{reduced group \Cstar-algebra} of $G$ is $\Cstarred(G) = \ol{\lambda_G(\contc(G))}^{\| \cdot \|}$.   The group \mbox{\Cstar-algebra} of $G$ does not contain the unitaries $u_g$ unless $G$ is discrete.  But $u_g \in \rM(\Cstarred(G))$ and even $u_g \contc(G) = \contc(G) = \contc(G) u_g$.  As a matter of fact, we have strongly dense inclusions $\Cstarred(G) \subset \rL(G)$ and $\rM(\Cstarred(G)) \subset \rL(G)$.

If $K \leq G$ is a compact open subgroup, we obtain an \emph{averaging projection} $p_K \in \contc(G) \subset \Cstarred(G)$ described by
\begin{equation*}
  p_K \xi(g) = \frac{1}{\mu(K)} \int_K \xi(kg) \rmd \mu(k)
\end{equation*}
for a square integrable function $\xi$ on $G$ representing an element of $\Ltwo(G)$.  These averaging projections play an important role in the present paper.  They form a natural approximate unit in $\Cstarred(G)$ and $\rL(G)$.  This is the content of the next proposition.
\begin{proposition}
  \label{prop:approximate-unit}
  Let $G$ be a totally disconnected group.  Then the net $(p_K)$, $K \leq G$ compact open subgroup, strictly converges to $1$ in $\Cstarred(G)$.  Further, it strongly converges to $1$ in $\rL(G)$.
\end{proposition}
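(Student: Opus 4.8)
The plan is a standard approximate-unit argument. First I would pin down the index set: the compact open subgroups of $G$, ordered by reverse inclusion, form a directed set, since the intersection of two compact open subgroups is again compact and open, and by van Dantzig's theorem they form a neighbourhood basis of $e$; in particular ``for $K$ small enough'' is a legitimate statement about the net $(p_K)$. For a compact open subgroup $K \leq G$ I would set $e_K := \mu(K)^{-1}\mathbbm{1}_K \in \contc(G)$, note that $\|e_K\|_1 = 1$, and check that $p_K = \lambda_G(e_K)$ (this uses only that $\mu|_K$, being a Haar measure of the compact, hence unimodular, group $K$, is invariant under inversion); consequently $\|p_K\| \leq 1$ for every $K$.

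The core of the argument is the estimate $\|e_K * f - f\|_1 \to 0$, and likewise $\|e_K * f - f\|_2 \to 0$, for every $f \in \contc(G)$. To see this I would rewrite $(e_K * f - f)(g) = \mu(K)^{-1}\int_K (f(hg) - f(g))\,\rmd\mu(h)$, so that $\|e_K * f - f\|_\infty \leq \sup\{\,|f(hg) - f(g)| : h \in K,\ g \in G\,\}$, a quantity that tends to $0$ as $K$ shrinks by the uniform continuity of $f \in \contc(G)$. Since moreover $\supp(e_K * f - f) \subseteq K\,\supp f \cup \supp f$ lies inside the fixed compact set $C := K_0\,\supp f \cup \supp f$ once $K \leq K_0$, the $L^\infty$-bound upgrades to $\|e_K * f - f\|_p \leq \mu(C)^{1/p}\,\|e_K * f - f\|_\infty \to 0$ for $p \in \{1,2\}$. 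I expect this compact-support bookkeeping, together with the correct handling of van Dantzig's theorem to make ``$K$ small'' meaningful, to be the only (rather minor) obstacle; the rest is soft.

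For the strict convergence $p_K \to 1$ in $\rM(\Cstarred(G))$ I would note that each $p_K$ is self-adjoint, so it suffices to show $\|p_K a - a\| \to 0$ for all $a \in \Cstarred(G)$ — taking adjoints then yields $\|a p_K - a\| \to 0$, and the two conditions together are precisely the definition of strict convergence. On the dense subalgebra $\lambda_G(\contc(G))$ this follows from $\|p_K \lambda_G(f) - \lambda_G(f)\| = \|\lambda_G(e_K * f - f)\| \leq \|e_K * f - f\|_1 \to 0$ (as $\lambda_G$ is $L^1$-contractive), and a triangle-inequality ($\eps/3$) argument using $\|p_K\| \leq 1$ extends it to all of $\Cstarred(G)$. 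Finally, for the strong convergence $p_K \to 1$ in $\rL(G)$ I would show $p_K \xi \to \xi$ in $\Ltwo(G)$ for every $\xi$: for $\xi = f \in \contc(G)$ one has $p_K f = e_K * f$ and $\|e_K * f - f\|_2 \to 0$ by the previous paragraph, and the general case follows from density of $\contc(G)$ in $\Ltwo(G)$ together with $\|p_K\| \leq 1$; since $p_K$ and $1$ both lie in $\rL(G)$, this is exactly strong convergence of $(p_K)$ to $1$ in $\rL(G)$.
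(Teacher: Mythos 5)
Your proposal is correct and fills in exactly the "standard estimate for $\|p_K x - x\|_1$ on the dense subalgebra $\contc(G)$" that the paper's one-line proof invokes; the only cosmetic difference is that you verify strong convergence on $\Ltwo(G)$ directly via the $\|e_K * f - f\|_2$ estimate, whereas the paper deduces it from strict convergence of the bounded net. Both routes are sound and essentially identical in substance.
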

\begin{proof}
  Since strict convergence implies strong convergence for bounded nets, it suffices to show that $(p_K)_K$ strictly converges to $1$ in $\Cstarred(G)$.  This in turn follows from a standard estimate for $\|(p_K x - x)\|_1$ for $x$ in the dense subalgebra $\contc(G) \subset \Cstarred(G)$ and $K$ some compact open subgroup of $G$.
  %
\end{proof}

For later use we want to note how averaging projections interact with each other and with the canonical unitaries $u_g$.  We start by describing relations between the averaging projections $p_K$ for different compact open subgroups $K \leq G$.  For the next statement, recall that $u_g \contc(G) = \contc(G)$.  Also note that $u_g p_L = p_L$ for all $g \in L$, so that the right hand side of the following equation is well-defined.
\begin{proposition}
  \label{prop:averaging-projections}
  Let $G$ be a locally compact group with compact open subgroups $L \leq K \leq G$.  In $\contc(G)$ we have
  \begin{equation*}
    p_K = \frac{1}{[K:L]} \sum_{gL \in K/L} u_g p_L
    \eqstop
    \end{equation*}
\end{proposition}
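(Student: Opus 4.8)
The plan is to compute both sides as convolution operators on $\Ltwo(G)$ by unwinding the definitions of $p_K$, $p_L$, and $u_g = \lambda_G(g)$, and to check that the resulting functions on $G$ agree pointwise. First I would recall that $p_L$ is represented (as a convolution kernel) by the function $\frac{1}{\mu(L)}\mathbbm{1}_L$, and similarly $p_K$ by $\frac{1}{\mu(K)}\mathbbm{1}_K$; here one uses that $\lambda_G(\mathbbm{1}_L/\mu(L))\xi(g) = \frac{1}{\mu(L)}\int_L \xi(k^{-1}g)\,\rmd\mu(k) = \frac{1}{\mu(L)}\int_L \xi(kg)\,\rmd\mu(k)$, since $L$ is a subgroup and Haar measure on a compact group is inversion-invariant. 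The unitary $u_g = \lambda_G(g)$ acts by $u_g\xi(h) = \xi(g^{-1}h)$, so $u_g p_L$ corresponds to the convolution kernel $h \mapsto \frac{1}{\mu(L)}\mathbbm{1}_L(g^{-1}h) = \frac{1}{\mu(L)}\mathbbm{1}_{gL}(h)$, i.e. left-translating the kernel of $p_L$ by $g$.

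Next I would observe that, since $L \leq K$, the coset space $K/L$ is finite with $[K:L] = \mu(K)/\mu(L)$ elements, and $K = \bigsqcup_{gL \in K/L} gL$ is a disjoint union. Summing the kernels just identified gives
\begin{equation*}
  \sum_{gL \in K/L} \frac{1}{\mu(L)}\mathbbm{1}_{gL}
  = \frac{1}{\mu(L)}\mathbbm{1}_{K}
  = \frac{[K:L]}{\mu(K)}\mathbbm{1}_{K}
  = [K:L]\cdot\Bigl(\text{kernel of } p_K\Bigr)\eqstop
\end{equation*}
Dividing by $[K:L]$ yields that $\frac{1}{[K:L]}\sum_{gL \in K/L} u_g p_L$ has the same convolution kernel as $p_K$, hence equals $p_K$ as an element of $\contc(G)$ (two elements of $\contc(G)$ with the same underlying function are equal). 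One should note the sum is well-defined independently of the choice of coset representatives: as remarked before the statement, $u_{g\ell} p_L = u_g u_\ell p_L = u_g p_L$ for $\ell \in L$, which is also visible at the level of kernels since $\mathbbm{1}_{g\ell L} = \mathbbm{1}_{gL}$.

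I do not anticipate a serious obstacle here; the only point requiring a little care is the bookkeeping of left versus right translations and the inversion-invariance of Haar measure on the compact subgroups, so that the formula $p_L \leftrightarrow \frac{1}{\mu(L)}\mathbbm{1}_L$ is correct with the sign conventions fixed in Section \ref{sec:group-operator-algebras}. An alternative, essentially equivalent, route is to verify the identity directly as operators by evaluating both sides on an arbitrary $\xi \in \Ltwo(G)$ and splitting the averaging integral over $K$ into integrals over the cosets $gL$; this avoids mentioning kernels but is the same computation. Either way the proof is a short unwinding of definitions.
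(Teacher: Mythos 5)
Your proposal is correct and takes essentially the same approach as the paper: the paper verifies the identity by applying both sides to an arbitrary $f \in \contc(G)$ and splitting the averaging integral over $K$ into integrals over the cosets $gL$, which is precisely the computation you describe (phrased at the level of convolution kernels, i.e. $\frac{1}{\mu(L)}\sum_{gL \in K/L}\mathbbm{1}_{gL} = \frac{[K:L]}{\mu(K)}\mathbbm{1}_K$). Your attention to the inversion-invariance of Haar measure on the compact subgroups and to the independence of the sum from the choice of coset representatives is exactly the right bookkeeping.
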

\begin{proof}
  Take $L \leq K \leq G$  as in the statement and let $f \in \contc(G)$ be arbitrary.  Let $\mu$ be a left Haar measure for $G$.  Then for all $h \in G$
\begin{align*}
  \bigl (\frac{1}{[K:L]}\sum_{gL \in K/L} u_g p_L f \bigr ) (h)
  & =
  \bigl (\frac{1}{[K:L]}\sum_{gL \in K/L}  p_L f \bigr ) (g^{-1}h) \\
  & = 
  \frac{1}{[K:L]}  \sum_{gL \in K/L} \frac{1}{\mu(L)} \int_L f(l^{-1}g^{-1}h) \rmd \mu(l) \\
  & =
  \frac{1}{\mu(K)} \sum_{gL \in K/L} \, \int_{g L} f(l^{-1}h) \rmd \mu(l) \\ 
  & =
  (p_K f )(h)
  \eqstop
\end{align*}
Since $\contc(G)$ is dense in $\Ltwo(G)$, we find that $p_K = \frac{1}{[K:L]} \sum_{gL \in K/L} u_g p_L$.
\end{proof}

The next lemma shows that averaging projections behave well with respect to conjugation by canonical unitaries.
\begin{lemma}
  \label{lem:conjugation-of-projections}
  Let $G$ be a locally compact group and $K \leq G$ be a compact open subgroup.  Then in $\contc(G)$ for all $g \in G$ we have
  \begin{equation*}
    u_g p_K u_g^* = p_{g K g^{-1}}
    \eqstop
  \end{equation*}
\end{lemma}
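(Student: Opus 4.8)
The plan is to verify the identity $u_g p_K u_g^* = p_{gKg^{-1}}$ directly as operators on $\Ltwo(G)$ by applying both sides to an arbitrary element $\xi \in \contc(G)$, which is dense in $\Ltwo(G)$, and checking that the resulting functions agree. First I would record the action of the canonical unitary: since $u_g = \lambda_G(g)$, we have $(u_g \xi)(h) = \xi(g^{-1}h)$ and $(u_g^* \xi)(h) = (u_{g^{-1}}\xi)(h) = \xi(gh)$ for $\xi \in \contc(G)$ and $h \in G$. Combining this with the formula
\begin{equation*}
  (p_K \xi)(h) = \frac{1}{\mu(K)} \int_K \xi(kh)\, \rmd\mu(k)
\end{equation*}
from the definition of the averaging projection, I would compute $(u_g p_K u_g^* \xi)(h)$ step by step: apply $u_g^*$ first to get the function $h \mapsto \xi(gh)$, then apply $p_K$, then apply $u_g$.

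Carrying this out, one gets
\begin{equation*}
  (u_g p_K u_g^* \xi)(h)
  = (p_K u_g^* \xi)(g^{-1}h)
  = \frac{1}{\mu(K)} \int_K (u_g^*\xi)(k g^{-1} h)\, \rmd\mu(k)
  = \frac{1}{\mu(K)} \int_K \xi(g k g^{-1} h)\, \rmd\mu(k)
  \eqstop
\end{equation*}
Now I would perform the substitution $k' = g k g^{-1}$, noting that $k \mapsto g k g^{-1}$ is a homeomorphism of $K$ onto $gKg^{-1}$, and that the pushforward of the normalised Haar measure $\frac{1}{\mu(K)}\mu|_K$ under this conjugation map is the normalised Haar measure $\frac{1}{\mu(gKg^{-1})}\mu|_{gKg^{-1}}$ on $gKg^{-1}$ — this is because conjugation is an automorphism of the compact group setting, so it sends the unique Haar probability measure of $K$ to the unique Haar probability measure of $gKg^{-1}$ (no modular-function correction appears since we are conjugating, not translating, and we have normalised). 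After the substitution the integral becomes $\frac{1}{\mu(gKg^{-1})}\int_{gKg^{-1}} \xi(k' h)\, \rmd\mu(k') = (p_{gKg^{-1}}\xi)(h)$, as desired.

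Since both $u_g p_K u_g^*$ and $p_{gKg^{-1}}$ are bounded operators agreeing on the dense subspace $\contc(G) \subset \Ltwo(G)$, they are equal, and both manifestly lie in $\contc(G) \subset \Cstarred(G)$ (the left side because $u_g \contc(G) = \contc(G) = \contc(G)u_g$ and $p_K \in \contc(G)$, the right side by definition of the averaging projection for the compact open subgroup $gKg^{-1}$). I do not expect any serious obstacle here; the only point requiring a word of care is the behaviour of Haar measure under the inner automorphism, and the cleanest way to handle it is precisely to work with the normalised (probability) Haar measures on the compact open subgroups, for which conjugation-invariance of the total mass is automatic and no factor of $\Delta(g)$ intervenes.
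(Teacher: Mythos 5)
Your proof is correct and follows essentially the same route as the paper: evaluate both operators on a test function in $\contc(G)$ and perform the substitution $k \mapsto gkg^{-1}$, turning the normalised average over $K$ into the normalised average over $gKg^{-1}$. The only cosmetic difference is the bookkeeping of the measure under conjugation — the paper makes the factor $\Delta(g) = \mu(K)/\mu(gKg^{-1})$ explicit, while you absorb it by invoking uniqueness of the Haar probability measure on the compact group $gKg^{-1}$; these are equivalent.
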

\begin{proof}
  Take $K \leq G$ as in the statement of the lemma and let $\mu$ be a left Haar measure for $G$.  For $f \in \contc(G)$ and  $g,h \in G$ we obtain
  \begin{align*}
    u_g p_K u_g^* f(h) 
    & = 
    \frac{1}{\mu(K)} \int_K f(g k^{-1} g^{-1} h) \rmd \mu(k) \\
    & =
    \frac{1}{\mu(K)} \Delta(g) \int_{g K g^{-1}} f(k^{-1}h) \rmd \mu(k) \\
    & =
    \frac{1}{\mu(g K g^{-1})} \int_{g K g^{-1}} f(k^{-1}h) \rmd \mu(k) \\
    & =
    p_{gKg^{-1}} f(h)
    \eqstop
  \end{align*}
  This shows that $u_g p_K u_g^* = p_{g K g^{-1}}$.
\end{proof}

We can next describe products of the form $p_K u_g p_K$ in $\contc(G)$.  The second part of the following proposition has a reformulation in terms of Hecke algebras (Section \ref{sec:hecke-cstar-algebras}).
\begin{proposition}
  \label{prop:multiplication-hecke-algebras}
  Let $K$ be a compact open subgroup and $g \in G$.  Put $L = K \cap g K g^{-1}$ Then 
  \begin{equation*}
    p_K u_g p_K = \frac{1}{[K : L]}\sum_{kL \in K/L} u_{kg} p_K
    \eqstop
  \end{equation*}
  In particular for $g, h \in G$ we have
  \begin{equation*}
    p_K u_h p_K u_g p_K = \frac{1}{[K : L]}\sum_{kL \in K/L} p_K u_{hkg} p_K
    \eqstop
  \end{equation*}
\end{proposition}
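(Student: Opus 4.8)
The plan is to prove the first identity $p_K u_g p_K = \frac{1}{[K:L]}\sum_{kL\in K/L} u_{kg}p_K$ by a direct computation on $\contc(G)$, paralleling the proof of Proposition \ref{prop:averaging-projections}. First I would note that $L = K\cap gKg^{-1}$ is again a compact open subgroup, so $[K:L]<\infty$ and the sum makes sense; moreover if $k'\in kL$ then $k'g = kg\ell g$ with $\ell g = g(g^{-1}\ell g)$ and $g^{-1}\ell g\in K$ (since $\ell\in gKg^{-1}$), so $u_{k'g}p_K = u_{kg}u_g p_{g^{-1}\ell g} \cdot (\text{correction})$; more cleanly, $\ell\in gKg^{-1}$ means $\ell g = g m$ with $m\in K$, hence $u_{k'g}p_K = u_{kg}u_{\ell}p_K$, and since $u_\ell p_{gKg^{-1}} = p_{gKg^{-1}}$ one checks the summand is independent of the coset representative after the computation below. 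The key computation is, for $f\in\contc(G)$ and $h\in G$, to expand $(p_K u_g p_K f)(h)$ as a double integral over $K\times K$, namely
\begin{equation*}
  (p_K u_g p_K f)(h) = \frac{1}{\mu(K)^2}\int_K\int_K f(k_2^{-1}g^{-1}k_1^{-1}h)\,\rmd\mu(k_1)\rmd\mu(k_2),
\end{equation*}
and then to reorganize the integral over $K$ in $k_1$ into a sum over cosets of $L$ in $K$, using that $g^{-1}k_1^{-1}g$ ranges over $g^{-1}Kg$ as $k_1$ ranges over $K$, while the $k_2$-integral over $K$ absorbs the part of $g^{-1}Kg$ lying in $K$, i.e. $g^{-1}Lg$.

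More precisely, writing $k_1 = k\ell'$ with $k$ running over coset representatives of $K/L$ and $\ell'\in L$, one gets $g^{-1}k_1^{-1}h = g^{-1}\ell'^{-1}k^{-1}h$, and since $\ell'\in gKg^{-1}$ we have $g^{-1}\ell'^{-1}g\in K$, so the inner double integral telescopes: the integral over $\ell'\in L$ together with the integral over $k_2\in K$ combines (after the substitution $k_2 \mapsto (g^{-1}\ell'^{-1}g)k_2$, which preserves Haar measure on $K$ since $g^{-1}\ell'^{-1}g\in K$) into $[K:L]$ times a single averaging over $K$, producing $(p_K f)$ evaluated at $g^{-1}k^{-1}h$. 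Collecting terms yields $\frac{1}{[K:L]}\sum_{kL\in K/L}(u_{kg}p_K f)(h)$, and density of $\contc(G)$ in $\Ltwo(G)$ upgrades this to the operator identity in $\contc(G)$.

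The second identity is then immediate: apply the first identity to the pair $u_g p_K$ appearing inside $p_K u_h p_K u_g p_K$, obtaining $p_K u_h p_K u_g p_K = p_K u_h \bigl(\frac{1}{[K:L']}\sum_{k'L'} u_{k'g}p_K\bigr)$ with $L' = K\cap gKg^{-1}$—wait, that is the wrong grouping; instead I would apply the first identity to $p_K u_h p_K$, giving $p_K u_h p_K = \frac{1}{[K:L]}\sum_{kL\in K/L} u_{hk}p_K$ with $L = K\cap hKh^{-1}$, but the statement has $L = K\cap gKg^{-1}$, so the cleanest route is: write $p_K u_h p_K u_g p_K = \frac{1}{[K:L]}\sum_{kL\in K/L} p_K u_{hk} u_g p_K$ is not quite it either. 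The honest approach: use the first identity on the rightmost $p_K u_g p_K = \frac{1}{[K:L]}\sum_{kL\in K/L} u_{kg}p_K$ with $L = K\cap gKg^{-1}$, then multiply on the left by $p_K u_h$; since $u_h u_{kg} = u_{h k g}$ is not of the form needed, one instead first moves: $p_K u_h u_{kg} p_K = p_K u_{hkg} p_K$ directly by $u_h u_{kg} = u_{hkg}$. This gives exactly $p_K u_h p_K u_g p_K = \frac{1}{[K:L]}\sum_{kL\in K/L} p_K u_{hkg} p_K$, as claimed.

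The main obstacle is purely bookkeeping: tracking which compact open subgroup plays the role of $L$ at each step, and justifying the Haar-measure substitution on $K$ by elements of the form $g^{-1}\ell'^{-1}g$ which lie in $K$ precisely because $\ell'\in L\subseteq gKg^{-1}$. No analytic subtlety arises since everything takes place in $\contc(G)$ with finite-index sums.
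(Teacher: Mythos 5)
Your argument is correct, and it reaches the same identities by a more hands-on route than the paper. The paper's proof of the first identity is purely algebraic: it writes $p_K = \frac{1}{[K:L]}\sum_{kL\in K/L}u_kp_L$ by Proposition \ref{prop:averaging-projections}, moves $p_L$ past $u_g$ via Lemma \ref{lem:conjugation-of-projections} to obtain $u_{kg}\,p_{g^{-1}Lg}\,p_K$, and then absorbs $p_{g^{-1}Lg}$ into $p_K$ because $g^{-1}Lg = g^{-1}Kg\cap K\leq K$. Your direct computation on $\contc(G)$ is an inlined version of exactly those three steps: the decomposition $k_1 = k\ell'$ over $K/L$ is Proposition \ref{prop:averaging-projections}, and the Haar-measure substitution $k_2\mapsto (g^{-1}\ell'^{-1}g)k_2$ plays the role of the conjugation lemma together with the absorption $p_{g^{-1}Lg}p_K = p_K$. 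Both are valid; the paper's route is shorter given that the two preparatory results are already on record, while yours is self-contained and makes the role of the inclusion $g^{-1}Lg\leq K$ explicit at the level of integrals. Two minor points. First, in your narration the combined $\ell'$- and $k_2$-integrals contribute a factor $\mu(L)/\mu(K) = 1/[K:L]$ per coset, not ``$[K:L]$ times a single averaging''; your displayed conclusion carries the correct constant, so this is only a slip in the prose. Second, for the ``in particular'' statement your final route — substitute the first identity for the rightmost $p_Ku_gp_K$, left-multiply by $p_Ku_h$, and use $u_hu_{kg}=u_{hkg}$ — is precisely what the paper means by ``follows directly from the first part''; the preceding false starts about regrouping should simply be deleted.
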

\begin{proof}
  By Proposition \ref{prop:averaging-projections}, we have 
  \begin{equation*}
    p_K = \frac{1}{[K : L]}\sum_{kL \in K/L} u_k p_L
    \eqstop
  \end{equation*}
  Using Lemma \ref{lem:conjugation-of-projections}, this implies
  \begin{equation*}
    p_K u_g p_K
    =
    \frac{1}{[K : L]} \sum_{kL \in K/L} \sum u_k p_L u_g p_K
    =
    \frac{1}{[K : L]}  \sum_{kL \in K/L} u_{kg} p_{g^{-1} L g}p_K
    =  
    \frac{1}{[K : L]}  \sum_{kL \in K/L} u_{kg} p_K
    \eqcomma
  \end{equation*}
  which proves the first part of the proposition.  The second part of the proposition follows directly from the first one.
\end{proof}

\subsubsection{Hecke \Cstar-algebras}
\label{sec:hecke-cstar-algebras}

Given a discrete Hecke pair $\Lambda \leq \Gamma$ there is a natural convolution product on double cosets.  Let $\CC(\Gamma, \Lambda)$ be the vector space whose basis consists of $v_g$, $\Lambda g \Lambda \in \Lambda \backslash \Gamma / \Lambda$.  We set $\rR(g) := [\Lambda : \Lambda \cap g \Lambda g^{-1}]$ and $\rL(g) := [\Lambda : \Lambda \cap g^{-1} \Lambda g]$.  We define a multiplication on $\CC(\Gamma, \Lambda)$ by
\begin{equation*}
    v_h v_g = \sum_{g' \Lambda \subset \Lambda g \Lambda} \frac{\rR(h)}{\rR(hg')} v_{hg'}
\end{equation*}
and an involution by
\begin{equation*}
  v_g^* = \frac{\rR(g)}{\rL(g)} v_{g^{-1}}
  \eqstop
\end{equation*}
There is a *-representation of $\CC(\Gamma, \Lambda)$ on $\ltwo(\Lambda \backslash \Gamma)$ via
\begin{equation*}
  v_h \delta_{\Lambda g}
  =
  \sum_{ \Lambda h' \subset \Lambda h \Lambda} \delta_{\Lambda h'g}
 \eqstop
\end{equation*}
The norm closure of $\CC(\Gamma, \Lambda)$ in this representation is the reduced Hecke-\Cstar-algebra of $\Lambda \leq \Gamma$, denoted by $\Cstarred(\Gamma, \Lambda)$.

In a completely analogous fashion, one associates with an inclusion $K \leq G$ of a compact open group into a locally compact group a reduced Hecke-\Cstar-algebra $\Cstarred(G,K)$.  Note that in this case the equality
\begin{equation*}
  \Delta(g) = \frac{[g^{-1} K g : K \cap g^{-1} K g]}{[K : K \cap g^{-1} K g]} = \frac{\rR(g)}{\rL(g)}
\end{equation*}
holds.  There is a *-isomorphism $p_K \Cstarred(G) p_K \cong \Cstarred(G,K)$ given by $p_K u_g p_K \mapsto  R(g)^{1/2} L(g)^{1/2} v_g$.   Moreover, if $K \leq G$ is the Schlichting completion of a discrete Hecke pair $\Lambda \leq \Gamma$, then $\Cstarred(G,K) \cong \Cstarred(\Gamma, \Lambda)$ as Tzanev showed in \mbox{\cite[Theorem 4.2]{tzanev03}}. 

\subsection{Weight theory}
\label{sec:weight-theory}

In this section we briefly recall weight theory and the Plancherel weight on the group \Cstar-algebra and the group von Neumann algebra of a locally compact group.  We refer the reader to \cite[Chapter VII]{takesaki03-II} for more details about weight theory on von Neumann algebras and \Cstar-algebras.  For a short summary of weights on \Cstar-algebras, we recommend \cite[Section 1]{kustermansvaes99}.  Weights should be thought of as integration against a possibly infinite measure on a noncommutative space.
\begin{definition}
  \label{def:weight}
  Let $A$ be a \Cstar-algebra.  A function $\vphi: A^+ \ra [0,\infty]$ is called a \emph{weight} if
\begin{itemize}
\item $\vphi(x + y) = \vphi(x) + \vphi(y)$ for all $x,y \in A^+$, and
\item $\vphi(rx) = r \vphi(x)$ for all $x \in A^+$, $r \in \RR_{\geq 0}$.
\end{itemize}

If $\vphi$ is a weight on $A$ then we denote by $\mathfrak n_\vphi := \{x \in A \amid \vphi(x^*x) < \infty\}$ the space of square integrable elements and by $\mathfrak m_\vphi = \mathfrak n_\vphi^* \mathfrak n_\vphi $ the space of integrable elements. Every element $x \in \mathfrak m_\vphi^+$ satisfies $\vphi(x) < \infty$.  There is a unique linear functional on $\mathfrak m_\vphi$ which extends $\vphi|_{\mathfrak m_\vphi^+}$.  We denote it also by $\vphi$.

$\vphi$ is called \emph{densely defined}, if $\mathfrak m_\vphi \subset A$ is dense.  We say that $\vphi$ is \emph{proper} if it is non-zero densely defined and lower semi-continuous in the norm topology.  A weight on a von Neumann algebra $M$ is called \emph{semi-finite} if $\mathfrak m_\vphi \subset M$ is strong-* dense.  It is called \emph{normal} if it is lower semi-continuous in the strong-* topology.  A normal semi-finite faithful weight is called an \emph{nsff weight}.
\end{definition}

\subsubsection{The GNS-construction}
\label{sec:GNS-construction}

\begin{definition}
  \label{def:gns-construction}
  Let $\vphi$ be a weight on a \Cstar-algebra $A$.  A GNS-construction for $\vphi$ is a triple $(H, \Lambda, \pi)$ where
  \begin{itemize}
  \item $H$ is a Hilbert space,
  \item $\Lambda: \mathfrak n_\vphi \ra H$ is a linear map with dense image such that $\langle \Lambda(x), \Lambda(y) \rangle = \vphi(y^*x)$ for all $x,y \in \mathfrak n_\vphi$.
  \item $\pi:A \ra \bo(H)$ is a *-representation satisfying $\pi(x)\Lambda(y) = \Lambda(xy)$ for all $a \in A$ and all $y \in \mathfrak n_\vphi$.
  \end{itemize}
\end{definition}

Every weight has a GNS-construction, which is unique up to unitary equivalence.

\subsubsection{The Plancherel weight and its modular automorphism group}
\label{sec:plancherel-weight}

  Let $G$ be a locally compact group and $\mu$ a left Haar measure on $G$.  Then the \emph{Plancherel weight} $\vphi$ on $\rL(G)$ satisfies $\vphi(f) = f(e)$ for all $f \in \contc(G) \subset \rL(G)$.  It is an nsff weight.  Its restriction to $\Cstarred(G)$ is a proper weight.  Note that a Plancherel weight depends on the choice of $\mu$ via the embedding $\contc(G) \subset \rL(G)$.  

If $K$ is a compact open subgroup, we associated the averaging projection $p_K = \frac{1}{\mu(K)} \int_K \lambda_k \rmd \mu(k)$ with it.  The Plancherel weight $\vphi$ satisfies 
\begin{equation*}
  \vphi(u_g p_K) =
  \begin{cases}
      \frac{1}{\mu(K)} &, g \in K \\
      0 &, g \in G \setminus K
      \eqstop
  \end{cases}
\end{equation*}
We will see in Lemma~\ref{lem:characterisation-plancherel-weight} that this property almost characterises Plancherel weights.

Plancherel weights are described in \cite[Chapter VII, \textsection 3]{takesaki03-II}.  The so called \emph{modular operator} of a Plancherel weight is the maximal self-adjoint positive multiplication operator associated with the modular function $\Delta$ of $G$.  For a Plancherel weight $\vphi$, the modular operator is denoted by $\Delta_\vphi$.  Its spectrum is $\sigma(\Delta_\vphi) = \ol{\Delta(G)}$.  Denote by $\sigma^\vphi_t = \Ad \Delta^{it}$ the so called \emph{modular automorphism group} of $\vphi$ (see \cite[Chapter VIII, \textsection 1]{takesaki03-II}.  It satisfies $\sigma^\vphi_t(u_g) = \Delta(g)^{it} u_g$ for all $g \in G$.  The set of elements $x \in \rL(G)$ such that the map $t \mapsto \sigma^\vphi_t(x)$ can be extended to an entire function on $\CC$ is called the set of analytic elements of $\vphi$.  All elements in $\contc(G)$ are analytic for any Plancherel weight on $\rL(G)$.

Let us collect some remarks about the Plancherel weight and its modular automorphism group.  
\begin{remark}
  \label{rem:restriction-of-modular-automorphism-group}
  Fix a locally compact group $G$ with modular function $\Delta$ and a left Haar measure $\mu$.  Let $\vphi$ be the Plancherel weight of $\rL(G)$ associated with $\mu$.
  \begin{itemize}
  \item Since $\sigma^\vphi_t$ preserves $\contc(G) \subset \rL(G)$, it restricts to a one-parameter group of *-automorphisms on $\Cstarred(G)$.  We refer to it as the natural one-parameter group of *-automorphism of the reduced group \Cstar-algebra.
  \item For a compact open subgroup $K \leq G$ we have $\Delta|_K \equiv 1$.  For any $z \in \CC$, we hence obtain 
    \begin{equation*}
      \sigma^\vphi_z(p_K)
      =
      \sigma^\vphi_z \bigl (\frac{1}{\mu(K)} \int_K u_g \rmd \mu (g) \bigr)
      =
      \frac{1}{\mu(K)} \int_K \sigma^\vphi_z(u_g) \rmd \mu (g)
      =
      p_K
      \eqstop
    \end{equation*}
  \end{itemize}
\end{remark}

\subsubsection{KMS-weights}
\label{sec:KMS-weights}

If $\vphi$ is a normal semi-finite faithful weight on a von Neumann algebra $M$, then it satisfies
\begin{equation*}
  \vphi(xy) = \vphi(y\sigma^\vphi_{-i}(x))
\end{equation*}
for all analytic square integrable elements $x,y \in M$. (Compare \cite[Chapter~VIII, \textsection~1, Definition~1.1]{takesaki03-II}).  A similar result for proper weights on \Cstar-algebras does not hold in general.  However, we are next going to define a class of weights on \Cstar-algebras which admit such control.  If $(\sigma_t)_t$ is a norm continuous one-parameter group of *-automorphisms, on a \Cstar-algebra $A$, then the set of analytic elements for $(\sigma_t)_t$ is dense in $A$ according to \cite[Section 1]{kustermans97}.  So the following definition makes sense.
\begin{definition}
  \label{def:KMS-weight}
  Let $A$ be a \Cstar-algebra and $(\sigma_t)_t$ a one-parameter group of *-automorphisms.  A~proper weight $\vphi$ on $A$ is called \emph{KMS-weight} with respect to $(\sigma_t)_t$ if
  \begin{itemize}
  \item $\vphi \circ \sigma_t = \vphi$ for all $t \in \RR$ and
  \item $\vphi(x^*x) = \vphi(\sigma_{\frac{i}{2}}(x)\sigma_{\frac{i}{2}}(x)^*)$ for all $x \in \rD(\sigma_{\frac{i}{2}})$,
  \end{itemize}
  where $\rD(\sigma_{\frac{i}{2}})$ denotes the domain of $\sigma_{\frac{i}{2}}$.
\end{definition}

Since a Plancherel weight on $\Cstarred(G)$ is a restriction of a Plancherel weight on $\rL(G)$, it is a KMS-weight.  For illustration, we explicitly work out the example of Plancherel weights of totally disconnected groups.
\begin{example}
  \label{ex:plancherel-KMS}
  Let $G$ be a totally disconnected locally compact group with left Haar measure $\mu$ and $\vphi$ be the Plancherel weight associated with $\mu$.  Then $\vphi$ is a KMS-weight with respect to the natural one-parameter group of *-automorphisms of $\Cstarred(G)$ from Remark \ref{rem:restriction-of-modular-automorphism-group}.

Indeed, let $g, h \in G$ and $K \leq G$ be a compact open subgroup.  Then by Proposition \ref{prop:multiplication-hecke-algebras}
\begin{equation*}
  \vphi(p_K u_g p_K u_h^* p_K)
  =
  \delta_{g,h} \frac{1}{\mu(K) [K : K \cap g^{-1} K g]}
  \eqstop
\end{equation*}
Also, using $\Delta(g) = \frac{\rR(g)}{\rL(g)}$, we have
\begin{align*}
  \vphi(\sigma_{i/2}(p_K u_h p_K)  \sigma_{i/2}(p_K u_g p_K)^*)
  & =
  \Delta(g)^{-1/2} \Delta(h)^{-1/2}  \, \vphi(p_K u_h p_K u_g^* p_K) \\
  & =
  \delta_{g,h} \Delta(g^{-1}) \frac{1}{\mu(K) [K : K \cap g^{-1}K g]} \\
  & =
  \delta_{g,h} \frac{1}{\mu(K) [K : K \cap g K g^{-1}]} \\
  & =
  \vphi(p_K u_g^* p_K u_h p_K)
  \eqstop
\end{align*}
Moreover,
\begin{equation*}
  \vphi(\sigma_t(p_K u_g p_K))
  =
  \Delta(g)^{it} \vphi(p_K u_g p_K)
  =
  \left \{
    \begin{array}{ll}
      \Delta(g)^{it} \frac{1}{\mu(K)} & \eqcomma \text{ if } g \in K \\
      0 & \eqcomma  \text{ if } g \in G \setminus K
    \end{array}
  \right \}
  =
  \vphi(p_K u_g p_K)
  \eqcomma
\end{equation*}
by the fact that $\Delta|_K \equiv 1$.  Since $\vphi$ is proper, \cite[Result 2.3 combined with Proposition 6.1]{kustermans97} applied to the subset $\bigcup_{\substack{K \leq G \\ \text{compact open}}} p_K \contc(G) p_K \subset \Cstarred(G)$ show that $\vphi$ is a \mbox{KMS-weight} with respect to $(\sigma_t)_t$.
\end{example}

One can show that the modular automorphism group of a KMS-weight, is implemented by a modular operator, which is described in the following proposition.
\begin{proposition}
  \label{prop:gns-and-modular-operator}
  Let $\psi$ be a KMS-weight with respect to a one-parameter group of \mbox{*-automorphisms} $(\sigma_t)_t$ on a \Cstar-algebra $A$.  Let $(H, \Lambda, \pi)$ be a GNS-construction for $\psi$.  There is a unique positive self-adjoint operator $\Delta_\psi$ on $H$ such that
  \begin{equation*}
    \Delta_\psi^{it} \Lambda(x) = \Lambda(\sigma_t(x))
  \end{equation*}
  for all $x \in \mathfrak n_\psi$.
\end{proposition}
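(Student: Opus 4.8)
The plan is to transfer the statement into the von Neumann algebraic setting, where the corresponding fact is standard Tomita--Takesaki theory, and then pull the resulting operator back to the GNS space of the \Cstar-algebra. First I would note that, since $\psi$ is a proper weight, its GNS-representation $\pi$ generates a von Neumann algebra $M := \pi(A)''$ on $H$, and $\psi$ extends canonically to a normal semi-finite faithful weight $\wt\psi$ on $M$ whose GNS-construction is again $(H, \Lambda, \pi)$ (with $\Lambda$ now defined on the larger left ideal $\mathfrak n_{\wt\psi}$, which contains $\Lambda(\mathfrak n_\psi)$ as a core). This is exactly the content of the passage from a KMS-weight on a \Cstar-algebra to its associated von Neumann algebraic weight; one also checks that the one-parameter group $(\sigma_t)_t$ extends to the modular automorphism group $(\sigma_t^{\wt\psi})_t$ of $\wt\psi$, using the KMS-condition in Definition \ref{def:KMS-weight} together with density of analytic elements.

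Second, I would invoke Tomita--Takesaki theory for the nsff weight $\wt\psi$ on $M$: there is a unique positive self-adjoint operator $\Delta_{\wt\psi}$ on $H$ — the modular operator of $\wt\psi$ — such that $\Delta_{\wt\psi}^{it}$ implements $\sigma_t^{\wt\psi}$ and, crucially, $\Delta_{\wt\psi}^{it}\Lambda(y) = \Lambda(\sigma_t^{\wt\psi}(y))$ for all $y \in \mathfrak n_{\wt\psi}$. Setting $\Delta_\psi := \Delta_{\wt\psi}$ and restricting this identity to $y = x \in \mathfrak n_\psi \subset \mathfrak n_{\wt\psi}$, and using that $\sigma_t^{\wt\psi}|_A = \sigma_t$, gives $\Delta_\psi^{it}\Lambda(x) = \Lambda(\sigma_t(x))$ for all $x \in \mathfrak n_\psi$, which is the desired relation.

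Third, for uniqueness: the closed linear span of $\{\Lambda(x) \amid x \in \mathfrak n_\psi\}$ is dense in $H$ by definition of a GNS-construction, and it is invariant under the unitary group $(\Delta_\psi^{it})_t$ by the relation just established; hence it is a core for $\log \Delta_\psi$. Two positive self-adjoint operators whose imaginary powers agree on a common core of the generators (equivalently: two strongly continuous one-parameter unitary groups agreeing on a dense invariant subspace) coincide by Stone's theorem, so $\Delta_\psi$ is uniquely determined by the stated property. The main obstacle, and the only place requiring genuine care rather than bookkeeping, is the first step: verifying cleanly that a KMS-weight on a \Cstar-algebra induces an nsff weight on the enveloping von Neumann algebra of its GNS-representation whose modular automorphism group restricts to $(\sigma_t)_t$ — this is precisely the analysis carried out by Kustermans in \cite{kustermans97}, and I would cite it rather than reprove it; alternatively one can give a direct construction of $\Delta_\psi$ as the closure of the operator $\Lambda(x) \mapsto \Lambda(\sigma_{-i}(x))$ raised to appropriate powers, but routing through \cite{kustermans97} is cleaner.
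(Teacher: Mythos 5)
Your proposal is correct and matches the paper's treatment: the paper offers no proof of this proposition, deferring precisely to \cite[Chapter VIII, \S 1, proof of Theorem 1.2]{takesaki03-II} and \cite[Section 2.2]{kustermansvaes99}, which is exactly the material your argument reconstructs (extension of $\psi$ to an nsff weight on $\pi(A)''$, Tomita--Takesaki for that weight, and uniqueness via Stone's theorem on the dense set $\Lambda(\mathfrak n_\psi)$). If anything, the cited sources obtain $\Delta_\psi$ a bit more directly: the invariance $\psi\circ\sigma_t=\psi$ already makes $\Lambda(x)\mapsto\Lambda(\sigma_t(x))$ a well-defined strongly continuous one-parameter unitary group on $H$, so Stone's theorem yields existence and uniqueness without invoking the full modular theory of the enveloping von Neumann algebra, and your ``core'' phrasing in the uniqueness step can be replaced by the simpler observation that bounded operators agreeing on a dense subspace agree everywhere.
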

We refer to \cite[Chapter VIII, §1, proof of Theorem 1.2]{takesaki03-II} and \cite[Section 2.2]{kustermansvaes99} for more details.

The notion of KMS-weights allows us to characterise the Plancherel weight on $\Cstarred(G)$ similar to the canonical trace on group \Cstar-algebras of discrete groups.  Recall that the natural one-parameter group of *-automorphisms of $\Cstarred(G)$ is the restriction of the modular flow of a Plancherel weight as described in Section \ref{sec:plancherel-weight}.
\begin{lemma}
  \label{lem:characterisation-plancherel-weight}
  Let $\psi$ be a KMS-weight for the natural one-parameter group of *-automorphisms on $\Cstarred(G)$.  If there is a left Haar measure $\mu$ on $G$ such that for every $g \in G$ and $K \leq G$ compact open we have
  \begin{equation*}
    \psi(u_g p_K) =
    \begin{cases}
      \frac{1}{\mu(K)} &, g \in K \\
      0 &, g \notin K
      \eqcomma
    \end{cases}
  \end{equation*}
  then $\psi$ is the Plancherel weight associated with $\mu$.
\end{lemma}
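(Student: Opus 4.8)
The plan is to show that the given KMS-weight $\psi$ agrees with the Plancherel weight $\vphi$ associated with $\mu$ on a dense, convolution-closed, $\sigma$-invariant subset of $\Cstarred(G)$, and then appeal to the uniqueness theory for KMS-weights. Concretely, I would work with the set $\cA := \bigcup_{K} p_K \contc(G) p_K \subseteq \Cstarred(G)$, where $K$ ranges over compact open subgroups of $G$; by Proposition~\ref{prop:approximate-unit} the net $(p_K)$ is an approximate unit, so $\cA$ is dense, and by Remark~\ref{rem:restriction-of-modular-automorphism-group} each $p_K$ is fixed by the natural one-parameter group, so $\cA$ consists of analytic elements stable under $(\sigma_t)_t$. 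Since $\cA$ is linearly spanned by elements of the form $p_K u_g p_K$ (using that $p_K \contc(G) p_K$ is spanned by double cosets, via Proposition~\ref{prop:multiplication-hecke-algebras}), it suffices to compare $\psi$ and $\vphi$ on products $p_K u_g p_K u_h^* p_K$ and to check the KMS compatibility on $\cA$.

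First I would compute $\psi(p_K u_g p_K u_h^* p_K)$ purely from the hypothesis. Using Proposition~\ref{prop:multiplication-hecke-algebras} to expand $p_K u_g p_K = \frac{1}{[K:L]} \sum_{kL \in K/L} u_{kg} p_K$ with $L = K \cap gKg^{-1}$, and then multiplying by $p_K u_h^* p_K$ and applying $\psi$ term by term, the hypothesis $\psi(u_x p_K) = \tfrac{1}{\mu(K)}\mathbbm{1}_{x \in K}$ forces all terms to vanish unless $\Lambda g \Lambda = \Lambda h \Lambda$, and a bookkeeping of coset representatives yields
\begin{equation*}
  \psi(p_K u_g p_K u_h^* p_K) = \delta_{\Lambda g \Lambda, \Lambda h \Lambda}\,\frac{1}{\mu(K)[K : K \cap g^{-1}Kg]}\,\cdot(\text{a combinatorial factor}),
\end{equation*}
which is exactly the value one gets for $\vphi$ by the very same computation (compare Example~\ref{ex:plancherel-KMS}, where $\vphi(p_K u_g p_K u_h^* p_K) = \delta_{g,h}\tfrac{1}{\mu(K)[K:K\cap g^{-1}Kg]}$). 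Hence $\psi = \vphi$ on $\cA$. The clean way to package this is: both $\psi$ and $\vphi$ are KMS-weights for $(\sigma_t)_t$ (the latter by Example~\ref{ex:plancherel-KMS}), they are both proper, and they coincide on the dense $*$-subalgebra $\lspan \cA$; by the uniqueness part of the GNS/Kustermans machinery for KMS-weights — \cite[Result~2.3 combined with Proposition~6.1]{kustermans97}, the same reference invoked in Example~\ref{ex:plancherel-KMS} — a KMS-weight is determined by its restriction to a $\sigma$-invariant dense $*$-subalgebra of analytic elements, so $\psi = \vphi$ on all of $\Cstarred(G)$.

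The main obstacle I anticipate is not the coset combinatorics (that is routine, modelled on Example~\ref{ex:plancherel-KMS} and Proposition~\ref{prop:multiplication-hecke-algebras}) but justifying the final density/uniqueness step rigorously: one must verify that $\lspan\cA$ really is a $*$-subalgebra (closure under multiplication is Proposition~\ref{prop:multiplication-hecke-algebras}; closure under $*$ is clear since $(p_K u_g p_K)^* = p_K u_{g^{-1}} p_K$), that it is norm-dense (Proposition~\ref{prop:approximate-unit}), that it is invariant under $(\sigma_t)_t$ and consists of analytic elements (Remark~\ref{rem:restriction-of-modular-automorphism-group}), and — most delicately — that two proper KMS-weights agreeing on such a subalgebra must be equal. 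For the last point one either quotes the relevant uniqueness statement from \cite{kustermans97} directly, or argues via the GNS-construction: the hypotheses let one build a common GNS-triple $(H,\Lambda,\pi)$ on which both weights are implemented by the Plancherel vector, and then lower semicontinuity upgrades the agreement from $\cA$ to all of $\Cstarred(G)^+$. I would present the argument in the first form, citing \cite{kustermans97}, to keep it short and consistent with how Example~\ref{ex:plancherel-KMS} was handled.
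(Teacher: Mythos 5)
Your proposal is correct and follows essentially the same route as the paper: one checks that $\psi$ agrees with the (suitably normalised) Plancherel weight on the dense subalgebra spanned by the elements $u_g p_K$ (equivalently on the corners $p_K \contc(G) p_K$), and then upgrades this to equality of the two proper KMS-weights using the strict convergence $p_K \to 1$ and the Kustermans--Vaes machinery. The only difference is bibliographic: the uniqueness step is what the paper takes from the proofs of Proposition~1.14 and Corollary~1.15 of \cite{kustermansvaes99}, whereas the reference \cite{kustermans97} you cite is the one used in Example~\ref{ex:plancherel-KMS} to \emph{verify} the KMS property, not to prove uniqueness; your fallback via the GNS-construction covers this.
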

We refer to the poofs of Proposition 1.14 and Corollary 1.15 in \cite{kustermansvaes99}.  Denoting by $\vphi$ the Plancherel weight normalised to $\vphi(p_K) = \psi(p_K)$, then the proofs given by Kustermans-Vaes can be taken over, if we observe that that $p_K$, $K \leq G$ compact open, is a net converging to $1$ strictly in $\Cstarred(G)$ by Proposition \ref{prop:approximate-unit} and $\psi$ and $\vphi$ agree on elements of the form $ap_K$.

The following proposition can be found for example in \cite[Proposition 1.13]{kustermansvaes99}.  It says that we have similar control over KMS-weights on a \Cstar-algebra as we have over normal semi-finite faithful weights on a von Neumann algebra.
\begin{proposition}
  \label{prop:KMS-exchange-property}
  Let $\psi$ be a KMS-weight to with respect to $(\sigma_t)_t$ on a \Cstar-algebra $A$.  Denote by $\cA$ the analytic subalgebra of $(\sigma_t)_t$.  If $x,y \in \cA$ are square integrable with respect to $\psi$, then $\psi(xy) = \psi(y \sigma_{-i}(x))$.
\end{proposition}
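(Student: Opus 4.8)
The plan is to pass to the GNS-construction of $\psi$ and read the asserted identity off the modular operator. Fix a GNS-construction $(H,\Lambda,\pi)$ for $\psi$ (Definition~\ref{def:gns-construction}) and let $\Delta_\psi$ be the positive self-adjoint operator on $H$ provided by Proposition~\ref{prop:gns-and-modular-operator}, so that $\Delta_\psi^{it}\Lambda(a)=\Lambda(\sigma_t(a))$ for all $a\in\mathfrak n_\psi$ and $t\in\RR$; recall that $\psi(b^*a)=\langle\Lambda(a),\Lambda(b)\rangle$ for $a,b\in\mathfrak n_\psi$ and that $\psi$ is linear on $\mathfrak m_\psi$. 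I would first record a few closure properties: $\sigma_t$-invariance of $\psi$ preserves $\mathfrak n_\psi$ for real $t$, and combining this with the second axiom of Definition~\ref{def:KMS-weight} shows that a square integrable $x\in\cA$ is automatically $*$-square integrable and that $\sigma_z(x)$ is square integrable for every $z\in\CC$. In particular $xy$ and $y\sigma_{-i}(x)$ lie in $\mathfrak m_\psi$, so both sides of the identity make sense, and $\psi(xy)=\langle\Lambda(y),\Lambda(x^*)\rangle$.

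The crux is an analyticity lemma: for square integrable $x\in\cA$ the vector $\Lambda(x)$ is an entire analytic vector for $\Delta_\psi$, with
\[
  \Delta_\psi^{iz}\Lambda(x)=\Lambda(\sigma_z(x))\qquad(z\in\CC)\eqstop
\]
To prove it, one observes that $t\mapsto\Delta_\psi^{it}\Lambda(x)=\Lambda(\sigma_t(x))$ extends to the entire $H$-valued function $z\mapsto\Lambda(\sigma_z(x))$ (by analyticity of $x$ for $(\sigma_t)_t$), and that this extension is bounded on every horizontal strip $|\Im z|\le c$ — the bound being extracted from the KMS axiom, which controls $\|\Lambda(\sigma_z(x))\|^2 = \psi(\sigma_z(x)^*\sigma_z(x))$ along horizontal lines. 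A standard argument (test against the spectral projections $\chi_{[1/n,n]}(\Delta_\psi)$ and let $n\to\infty$) then shows that such a vector lies in $\mathrm{dom}(\Delta_\psi^{iz})$ for all $z$ with the displayed formula. I expect this step to be the main obstacle: it is the only place where the distinction between a KMS-weight and an arbitrary proper weight is genuinely used, and the domain bookkeeping around the continuation $t\mapsto\Delta_\psi^{it}\Lambda(x)$ needs care.

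Granting the lemma, the identity follows by a computation in modular theory. Write $S$ for the closure of the conjugate-linear map $\Lambda(a)\mapsto\Lambda(a^*)$ (defined on analytic square integrable elements), with polar decomposition $S=J\Delta_\psi^{1/2}$, so that $\Lambda(a^*)=J\Delta_\psi^{1/2}\Lambda(a)$. Using $\Delta_\psi^{1/2}\Lambda(x)=\Lambda(\sigma_{-i/2}(x))$, $\Delta_\psi\Lambda(x)=\Lambda(\sigma_{-i}(x))$ and the analogous relations for $y$, one expands $\psi(xy)=\langle\Lambda(y),\Lambda(x^*)\rangle$ and $\psi(y\sigma_{-i}(x))=\langle\Lambda(\sigma_{-i}(x)),\Lambda(y^*)\rangle$, and checks that both reduce to the same inner product after commuting $J$ past the powers of $\Delta_\psi$ — this is exactly the computation that establishes the KMS exchange identity for normal semi-finite faithful weights on von Neumann algebras recalled at the start of this subsection. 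Equivalently, one may extend $\psi$ to an nsff weight $\tilde\psi$ on $\pi(A)''$ whose modular automorphism group restricts to $(\sigma_t)_t$ on $\pi(A)$ — with $\Delta_\psi$ from Proposition~\ref{prop:gns-and-modular-operator} as its modular operator — and invoke that von Neumann algebra statement directly.
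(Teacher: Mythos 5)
The paper gives no proof of this proposition at all --- it is quoted from \cite[Proposition~1.13]{kustermansvaes99} --- so there is no in-text argument to compare yours against.  Your reconstruction follows the same route as that reference: pass to a GNS construction, identify $\Delta_\psi$ with the modular operator, and read the exchange identity off $S=J\Delta_\psi^{1/2}$.  The closing computation (commuting $J$ past powers of $\Delta_\psi$ to match $\langle\Lambda(y),J\Delta_\psi^{1/2}\Lambda(x)\rangle$ with $\langle\Delta_\psi\Lambda(x),J\Delta_\psi^{1/2}\Lambda(y)\rangle$) is correct, and the alternative you mention at the end --- extending $\psi$ to an nsff weight on $\pi(A)''$ whose modular group restricts to $(\sigma_t)_t$ and invoking the von Neumann algebra identity recalled at the start of Section~\ref{sec:KMS-weights} --- is exactly what Kustermans--Vaes do; it is the cleaner option because it offloads the hard analysis onto classical Tomita--Takesaki theory, at the price of constructing the extension $\tilde\psi$.

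The one genuine gap is in your preliminary ``closure properties'', and it propagates into the analyticity lemma.  The second KMS axiom applied to $\sigma_{-i/2}(x)$ gives $\psi(xx^*)=\psi(\sigma_{-i/2}(x)^*\sigma_{-i/2}(x))$: it trades $*$-square-integrability of $x$ for square-integrability of $\sigma_{-i/2}(x)$, neither of which is known at that point.  So the claim that a square-integrable analytic $x$ satisfies $x^*\in\mathfrak n_\psi$ and $\sigma_z(x)\in\mathfrak n_\psi$ for all $z$ does not follow by ``combining'' $\sigma_t$-invariance with the KMS axiom, and your proof of the analyticity lemma is circular at exactly this spot: the boundedness of $z\mapsto\Lambda(\sigma_z(x))$ on a horizontal line $\{t+ic\}$ via $\psi(\sigma_{ic}(x)^*\sigma_{ic}(x))$ presupposes $\sigma_{ic}(x)\in\mathfrak n_\psi$, which is what you are trying to prove.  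The standard repair is to establish the lemma first for the smeared elements $x_n=\tfrac{n}{\sqrt{\pi}}\int e^{-n^2t^2}\sigma_t(x)\,\rmd t$, for which $\Lambda(x_n)=\tfrac{n}{\sqrt{\pi}}\int e^{-n^2t^2}\Delta_\psi^{it}\Lambda(x)\,\rmd t$ is an entire vector for $\Delta_\psi$ with $\Delta_\psi^{iz}\Lambda(x_n)=\Lambda(\sigma_z(x_n))$ by spectral calculus, and then to pass to the limit using that $\Lambda$ is closed (lower semicontinuity of the proper weight $\psi$) and that $\Delta_\psi^{iz}$ is closed; the memberships $\sigma_z(x)\in\mathfrak n_\psi$ and $x^*\in\mathfrak n_\psi$ come out of this limit argument rather than going into it.  With that reordering, your argument is complete.
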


\subsubsection{The $\rS$-invariant of a group von Neumann algebra}
\label{sec:S-invariant-group-von-Neumann-algebra}

We explain how to determine Connes' $\rS$-invariant (Section \ref{sec:type}) for group von Neumann algebras of totally disconnected groups.  For the purpose of this paper, it suffices to analyse the modular operator of a Plancherel weight.

We start by identifying the centraliser (see \cite[Chapter VIII, \textsection 2]{takesaki03-II}) of a Plancherel weight.  If $\vphi$ is an nsff weight on a von Neumann algebra $M$, and $(\sigma^\vphi_t)_t$ is the modular automorphism group of $\vphi$, then the fixed point algebra of $(\sigma^\vphi_t)_t$ is denoted by $M^\vphi$.  It is called the centraliser of $\vphi$.  For the next proposition recall from Section \ref{sec:totoally-disconnected-groups} that $G_0$ denotes the kernel of the modular function of a locally compact group $G$.
\begin{proposition}
  \label{prop:centraliser-plancherel-weight}
  Let $G$ be a totally disconnected group and $\vphi$ a Plancherel weight on $\rL(G)$.  Then $\rL(G)^\vphi = \rL(G_0)$..
\end{proposition}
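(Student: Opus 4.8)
The plan is to show the two inclusions $\rL(G_0) \subseteq \rL(G)^\vphi$ and $\rL(G)^\vphi \subseteq \rL(G_0)$ separately, using the explicit description of the modular automorphism group $\sigma^\vphi_t(u_g) = \Delta(g)^{it} u_g$ recalled in Section~\ref{sec:plancherel-weight}. For the easy inclusion: if $g \in G_0$, then $\Delta(g) = 1$, so $\sigma^\vphi_t(u_g) = u_g$ for all $t$; since $G_0$ is itself a locally compact (indeed open, as $\Delta$ is locally constant on a totally disconnected group) subgroup, $\rL(G_0) \subseteq \rL(G)$ is generated by such $u_g$ together with averaging projections $p_K$ with $K \leq G_0$ compact open, and $\sigma^\vphi_z(p_K) = p_K$ by Remark~\ref{rem:restriction-of-modular-automorphism-group}. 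Hence every element of the dense subalgebra $\contc(G_0)$ is fixed by $(\sigma^\vphi_t)_t$, and since $M^\vphi$ is strongly closed, $\rL(G_0) \subseteq \rL(G)^\vphi$.

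For the reverse inclusion I would use the $\vphi$-preserving conditional expectation. Since $G_0 \leq G$ is a closed (in fact open) normal subgroup, there is a normal conditional expectation $E: \rL(G) \to \rL(G_0)$ determined on $\contc(G)$ by restriction of functions to $G_0$ (equivalently, $E(u_g) = u_g$ if $g \in G_0$ and $E(u_g p_K) = 0$-type behaviour when the relevant coset misses $G_0$); this $E$ is compatible with the Plancherel weights, i.e. $\vphi = \vphi|_{\rL(G_0)} \circ E$. The key point is that $E$ commutes with $\sigma^\vphi_t$: on $\contc(G)$ both $E$ and $\sigma^\vphi_t$ act "diagonally" with respect to the decomposition of $\Ltwo(G)$ along the values of $\Delta$, so this is a direct check on the spanning set $\{u_g p_K\}$ using $\Delta|_{G_0} \equiv 1$. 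Given an $x \in \rL(G)^\vphi$, I would decompose $\Ltwo(G)$ according to the (countable, since $\Delta(G) \subseteq \QQ$) level sets $\Delta^{-1}(s)$, $s \in \Delta(G)$: the modular operator $\Delta_\vphi$ is the multiplication operator by $\Delta$, so $x$ commutes with every spectral projection $q_s$ of $\Delta_\vphi$, and $q_s \rL(G) q_s$ is spanned by $u_g p_K$ with $g$ lying in the $\Delta$-level $s$. An element commuting with all $q_s$ is "supported" on $\bigsqcup_s \Delta^{-1}(s) \times \Delta^{-1}(s)$; combining with the group structure (the product of two elements at level $s$ and $s^{-1}$ lies at level $1$) forces $x \in \rL(G_0)$. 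Concretely: $x = \lim x q_1$-type argument, or one simply observes $E(x) = x$ because $x$ already commutes with the projection implementing $E$ at the Hilbert space level, which is $q_1$ itself.

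The cleanest route, which I would actually write up, is: (1) identify $q_1$, the spectral projection of $\Delta_\vphi$ at the eigenvalue $1$, as the orthogonal projection of $\Ltwo(G)$ onto $\Ltwo(G_0)$; (2) note $\rL(G_0) = q_1 \rL(G) q_1$ sitting inside $\rL(G)$ and that $E(x) = q_1 x q_1$ restricted appropriately is the expectation; (3) for $x \in \rL(G)^\vphi$, since $\sigma^\vphi_t = \Ad\Delta_\vphi^{it}$ fixes $x$, $x$ commutes with $\Delta_\vphi$, hence with $q_1$, so $x = q_1 x q_1 + (1-q_1) x (1-q_1)$, and then a short argument on $\contc(G)$ shows the off-diagonal-in-$\Delta$ content is actually zero — equivalently $x = E(x) \in \rL(G_0)$. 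The main obstacle is step~(3): making rigorous that commuting with $\Delta_\vphi$ forces $x$ into $\rL(G_0)$ rather than merely into the fixed-point algebra of the coarser $\ZZ$-grading; this requires using that $G_0$ is precisely the $\Delta$-level of the identity and that $G/G_0 \hookrightarrow \RR_{>0}$ is order-isomorphic to a subgroup, so the grading of $\rL(G)$ by $G/G_0$ refines to a grading by a subgroup of $\RR$, and the degree-zero part is exactly $\rL(G_0)$ by the standard Fell-bundle / crossed-product identification of the fixed-point algebra of a dual-type action with the unit-fibre algebra.
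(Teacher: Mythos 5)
Your first inclusion is the same as the paper's. For the converse, your route through the spectral projections of $\Delta_\vphi$ and the conditional expectation $E:\rL(G)\to\rL(G_0)$ is genuinely different from the paper's argument, which stays entirely at the level of GNS vectors: the paper cuts $x\in\rL(G)^\vphi$ by $p_K$ (legitimate since $\sigma^\vphi_t(p_K)=p_K$ and the $p_K$ form an approximate unit), writes $\Lambda_\vphi(xp_K)=\sum_{gK}x_g\mathbb{1}_{gK}$, applies $\Delta_\vphi^{it}\Lambda_\vphi(xp_K)=\Lambda_\vphi(\sigma^\vphi_t(xp_K))=\Lambda_\vphi(xp_K)$, and reads off from uniqueness of the coefficients that the function is supported on $G_0$; injectivity of $\Lambda_\vphi$ (faithfulness of $\vphi$) then yields $xp_K\in\rL(G_0)$ with no further machinery. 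Your version replaces this vector computation by an operator computation, which is fine in principle but forces you to pay for one extra fact.

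That extra fact is the gap in your decisive step. From ``$x$ commutes with $q_1$'' you get $xq_1=q_1xq_1=E(x)q_1$, i.e.\ $(x-E(x))q_1=0$; the inference ``hence $x=E(x)$'' is a non sequitur unless you know that $q_1\Ltwo(G)=\Ltwo(G_0)$ is \emph{separating} for $\rL(G)$. This is true and is exactly the missing lemma: if $y\in\rL(G)$ and $yq_1=0$, then $\Lambda_\vphi(yp_K)=y\Lambda_\vphi(p_K)=0$ for every compact open $K\leq G$ (note $K\subseteq G_0$ automatically since $\Delta|_K\equiv 1$, so $\Lambda_\vphi(p_K)\in\Ltwo(G_0)$), whence $yp_K=0$ by faithfulness of $\vphi$ and $y=0$ by the approximate unit. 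Relatedly, the assertion ``$\rL(G_0)=q_1\rL(G)q_1$ sitting inside $\rL(G)$'' is false as stated: $q_1\notin\rL(G)$ when $G$ is not unimodular and the corner acts only on $\Ltwo(G_0)$; what is true is the identity $q_1xq_1=E(x)q_1$ used above. Your fall-back via the Fell-bundle/dual-action picture also closes the argument, but the ``standard identification'' you invoke needs two checks you only allude to: that $(\sigma_t)_t$ extends to an action of the compact group $\widehat{G/G_0}$ whose Haar average is $E$, and that $t\mapsto(gG_0\mapsto\Delta(g)^{it})$ has dense image in $\widehat{G/G_0}$ — this is where the injectivity of $\Delta(G)\hookrightarrow\RR_{>0}$ genuinely enters — so that the $\sigma$-fixed points coincide with the $\widehat{G/G_0}$-fixed points. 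Either repair takes only a few lines; the paper's GNS computation reaches the same conclusion more directly.
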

\begin{proof}
  Since $\sigma_t^\vphi(u_g) = \Delta(g)^{it} u_g$ for all $g \in G$, it follows that $\rL(G_0) \subset \rL(G)^\vphi$.  We prove the converse inclusion.  For $x \in \rL(G)^\vphi$ and a compact open subgroup $K \leq G$, we have $xp_K \in \rL(G)^\vphi$.  By Proposition~\ref{prop:approximate-unit} it suffices to prove that $\rL(G)^\vphi p_K \subset \rL(G_0)$.  Let $x \in \rL(G)^\vphi p_K$.  Since $p_K \in \mathfrak{n}_\vphi$, we can consider $\Lambda_\vphi(x) = \sum_{gK \in G/K} x_{g} \mathbb{1}_{gK}$ for unique scalars $x_g \in \CC$.  By Proposition \ref{prop:gns-and-modular-operator}, we have
  \begin{equation*}
    \Lambda_\vphi(x)
    =
    \Lambda_\vphi(\sigma^\vphi_t(x))
    =
    \Delta_\vphi^{it} \Lambda_\vphi(x)
    =
    \sum_{gK \in G/K} x_{gK} \Delta(g)^{it} \mathbb{1}_{gK}
    \eqstop
  \end{equation*}
  By uniqueness of the coefficients $x_g$, $gK \in G/K$, we see that $\Lambda_\vphi(x) \in \Ltwo(G_0)$.  This shows that $x \in \rL(G_0)$, which finishes the proof.
\end{proof}

\begin{proposition}
  \label{prop:spectrum-plancherel-weight}
  Let $G$ be a locally compact group and $\vphi$ be a Plancherel weight on $\rL(G)$.  Then $\sigma(\Delta_\vphi) = \ol{\Delta(G)}$.
\end{proposition}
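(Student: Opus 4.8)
The plan is to reduce the statement to the standard computation of the spectrum of a multiplication operator, using the explicit description of $\Delta_\vphi$ recalled in Section~\ref{sec:plancherel-weight}. There it is stated that the modular operator of a Plancherel weight $\vphi$ is the maximal positive self-adjoint multiplication operator $M_\Delta$ associated with the modular function $\Delta : G \ra \RR_{> 0}$, acting on the GNS Hilbert space $\Ltwo(G)$ by $(\Delta_\vphi \xi)(g) = \Delta(g)\xi(g)$ for $\xi$ in its domain. If one prefers an internal argument, this can be extracted by combining the identity $\sigma^\vphi_t(u_g) = \Delta(g)^{it} u_g$ with $\Delta_\vphi^{it}\Lambda_\vphi(x) = \Lambda_\vphi(\sigma^\vphi_t(x))$ from Proposition~\ref{prop:gns-and-modular-operator}: this forces $\Delta_\vphi^{it}$ to act on the dense subspace $\Lambda_\vphi(\contc(G)) = \contc(G) \subset \Ltwo(G)$ as multiplication by $g \mapsto \Delta(g)^{it}$, and hence $\Delta_\vphi = M_\Delta$ by uniqueness of the generator of a strongly continuous one-parameter unitary group.

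Granting this, the proof has two routine steps. First, for a left Haar measure $\mu$ on $G$ the spectrum of the multiplication operator $M_\Delta$ on $\Ltwo(G,\mu)$ equals the essential range of $\Delta$, i.e.\ the set $\{\lambda \in \RR_{\geq 0} \amid \mu(\{g \in G : |\Delta(g) - \lambda| < \veps\}) > 0 \text{ for all } \veps > 0\}$; this is the classical description of the spectrum of a multiplication operator by a nonnegative measurable function. Second, I would identify this essential range with $\ol{\Delta(G)}$ using that $\Delta$ is continuous and that $\mu$ has full support: if $\lambda \in \ol{\Delta(G)}$, then for every $\veps > 0$ the set $\Delta^{-1}\big((\lambda-\veps,\lambda+\veps)\big)$ is open and nonempty, hence of positive measure, so $\lambda$ is in the essential range; conversely, if $\lambda \notin \ol{\Delta(G)}$, then $\Delta^{-1}\big((\lambda-\veps,\lambda+\veps)\big) = \emptyset$ for $\veps$ small, so $\lambda$ is not in the essential range. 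Combining the two steps yields $\sigma(\Delta_\vphi) = \ol{\Delta(G)}$.

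I do not expect a genuine obstacle here. The only point requiring a little care is the identification $\Delta_\vphi = M_\Delta$, where one must match the normalisation convention of Takesaki's description of the Plancherel weight and be mindful of the left-versus-right Haar measure bookkeeping; however, since $\Delta(g^{-1}) = \Delta(g)^{-1}$ we have $\Delta(G) = \Delta(G)^{-1}$ as a subset of $\RR_{>0}$, so the potential ambiguity between $M_\Delta$ and $M_{\Delta^{-1}}$ does not affect the closure $\ol{\Delta(G)}$ and hence does not affect the conclusion. The remainder is the standard essential-range computation spelled out above.
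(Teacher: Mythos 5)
Your proposal is correct and follows the same route as the paper: the paper's proof simply cites the description of $\Delta_\vphi$ as the multiplication operator associated with $\Delta$ from Section \ref{sec:plancherel-weight} and declares the result immediate. You merely spell out the implicit essential-range computation (using continuity of $\Delta$ and full support of the Haar measure), which is exactly the step the paper leaves to the reader.
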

\begin{proof}
  We saw in Section \ref{sec:plancherel-weight} that $\Delta_\vphi$ is the multiplication operator associated with $g \mapsto \Delta(g)$.  So the proposition follows right away.
\end{proof}

\begin{theorem}
  \label{thm:S-invariant-group-von-Neumann-algebra}
  Let $G$ be a totally disconnected group such that $\rL(G_0)$ is a factor.  Then $\rS(\rL(G)) = \ol{\Delta(G)}$.
\end{theorem}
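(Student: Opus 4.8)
The plan is to deduce this directly from Connes' criterion, Theorem~\ref{thm:calculate-S}, applied to $M = \rL(G)$ equipped with a Plancherel weight, whose centraliser has already been computed. First I would fix a left Haar measure $\mu$ on $G$ and let $\vphi$ be the associated Plancherel weight on $\rL(G)$; by the discussion in Section~\ref{sec:plancherel-weight} this is a normal semi-finite faithful weight, and by Proposition~\ref{prop:centraliser-plancherel-weight} its centraliser is $\rL(G)^\vphi = \rL(G_0)$, which is a factor by hypothesis.

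The only point that needs verification before Theorem~\ref{thm:calculate-S} can be quoted is that $\rL(G)$ itself is a factor, since this is part of the hypotheses of that theorem. Here I would invoke the standard fact that the modular automorphism group $(\sigma^\vphi_t)_t$ of any normal semi-finite faithful weight acts trivially on the centre, so that $Z(\rL(G)) \subseteq \rL(G)^\vphi = \rL(G_0)$. Any element of $Z(\rL(G))$ in particular commutes with all of $\rL(G_0)$, hence lies in $Z(\rL(G_0)) = \CC$, so $\rL(G)$ is a factor. (In the totally disconnected situation one can alternatively obtain $Z(\rL(G)) \subseteq \rL(G_0)$ by hand from $\sigma^\vphi_t(u_g) = \Delta(g)^{it} u_g$ together with Proposition~\ref{prop:approximate-unit}, exactly as in the proof of Proposition~\ref{prop:centraliser-plancherel-weight}.)

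With $\rL(G)$ a factor and $\rL(G)^\vphi = \rL(G_0)$ a factor, Theorem~\ref{thm:calculate-S} applies and yields $\rS(\rL(G)) = \sigma(\Delta_\vphi)$, where $\Delta_\vphi$ is the modular operator of the Plancherel weight. By Proposition~\ref{prop:spectrum-plancherel-weight} we have $\sigma(\Delta_\vphi) = \ol{\Delta(G)}$, and chaining the two equalities completes the proof.

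I do not expect a serious obstacle: essentially all the content sits in the two cited propositions on Plancherel weights and in Connes' theorem. The only step requiring a little care is the passage of factoriality from $\rL(G_0) = \rL(G)^\vphi$ up to $\rL(G)$, and even that reduces to the elementary observation that the centre is contained in the centraliser of the weight.
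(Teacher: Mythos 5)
Your proof is correct and follows essentially the same route as the paper: apply Connes' criterion (Theorem~\ref{thm:calculate-S}) to the Plancherel weight, using Proposition~\ref{prop:centraliser-plancherel-weight} to identify the centraliser with $\rL(G_0)$ and Proposition~\ref{prop:spectrum-plancherel-weight} to compute the spectrum of the modular operator. Your extra step deducing factoriality of $\rL(G)$ itself from $Z(\rL(G)) \subseteq \rL(G)^\vphi = \rL(G_0)$ is a welcome addition, since the paper's two-line proof silently assumes this hypothesis of Theorem~\ref{thm:calculate-S} and only establishes it separately at the point of application in Theorem~\ref{thm:factoriality}.
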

\begin{proof}
  By Proposition \ref{prop:centraliser-plancherel-weight} we may apply Theorem \ref{thm:calculate-S} to a Plancherel weight $\vphi$ of $\rL(G)$.  Then Proposition \ref{prop:spectrum-plancherel-weight} shows that $\rS(\rL(G)) = \sigma(\Delta_\vphi) = \ol{\Delta(G)}$.
\end{proof}

\section{Groups acting on trees with open stabilisers of boundary points}
\label{sec:open-stabilisers}

In this section we describe different aspects of groups $G$ acting on a tree $T$ such that $G_x \leq G$ is open for some point $x \in \partial T$.  In Theorem \ref{thm:characterisation-condition-star} we describe condition ($*$) from the introduction, which applies to all groups treated in the rest of this article.
\begin{proposition}
  \label{prop:characaterisation-open-stabiliser}
  Let $G \leq \Aut(T)$ be a closed subgroup and let $x \in \partial T$.  Then the following statements are equivalent.
  \begin{itemize}
  \item $G_x \leq G$ is open.
  \item For every compact open subgroup $K \leq G$ the orbit $Kx$ is finite.
  \item There is a compact open subgroup $K \leq G$ fixing $x$.
  \end{itemize}
\end{proposition}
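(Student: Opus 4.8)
The plan is to prove the cycle of implications, reading the three bullets as $(1)$ ``$G_x$ is open'', $(2)$ ``$Kx$ is finite for every compact open subgroup $K \leq G$'', and $(3)$ ``there is a compact open subgroup $K \leq G$ with $Kx = \{x\}$''. All three arrows are soft, relying only on the totally disconnected locally compact structure of $G$ (a closed subgroup of $\Aut(T)$, hence totally disconnected locally compact) and on elementary properties of the shadow topology.

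For $(1) \Rightarrow (2)$: given a compact open subgroup $K \leq G$, put $K_x := K \cap G_x$, the stabiliser of $x$ in $K$. If $G_x$ is open then $K_x$ is an open subgroup of the compact group $K$, so $[K : K_x] < \infty$; since the orbit map induces a bijection $K/K_x \to Kx$, we conclude that $Kx$ is finite. For $(3) \Rightarrow (1)$: if $K \leq G$ is compact open and fixes $x$, then $K \leq G_x$, so $G_x$ is a union of left $K$-cosets and is therefore open.

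The one arrow requiring a remark is $(2) \Rightarrow (3)$. By van Dantzig's theorem $G$ has a compact open subgroup $K$, and by hypothesis the orbit $Kx$ is finite. Since the shadow topology on $\partial T$ is Hausdorff — two distinct boundary points have disjoint basic neighbourhoods of the form $\cU_y(n)$ as in Remark \ref{rem:control-shadow-topology} — the finite set $Kx$ is discrete, and continuity of the action $K \grpaction{} \partial T$ makes the orbit map $k \mapsto kx$, $K \to Kx$, locally constant. Hence $K_x = K \cap G_x$ is open in $K$; as an open subgroup of the compact group $K$ it is also closed, hence compact, and it is open in $G$ because $K$ is. Thus $K_x$ is a compact open subgroup of $G$ fixing $x$, which is $(3)$. (One could alternatively invoke that a compact subgroup fixing a boundary point fixes a vertex, together with Lemma \ref{lem:compacts-fix-open-neighbourhoods}, but this is not needed.) I do not anticipate any genuine obstacle here; the only non-formal input is the Hausdorffness of the shadow topology used in the last step.
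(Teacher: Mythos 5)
Your proof is correct and follows essentially the same route as the paper: the cycle $(1)\Rightarrow(2)\Rightarrow(3)\Rightarrow(1)$ with orbit--stabiliser for the first arrow and $K\leq G_x$ for the last. For $(2)\Rightarrow(3)$ the paper observes that $K_x\leq K$ is closed of finite index and hence open, which is just a rephrasing of your ``locally constant orbit map into a finite discrete set'' argument, so the two proofs coincide in substance.
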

\begin{proof}
  If $G_x$ is open, then $|K x | = [K : K \cap G_x]$ is finite for all compact open subgroups $K \leq G$.  Further, if $|Kx| < \infty$ for some compact open subgroup, then $K_x \leq K$ is closed and has finite index and it is hence a compact open subgroup of $G$.  Finally, if some compact open subgroup $K$ fixes $x$, then $K \leq G_x$, showing that $G_x$ is open.
\end{proof}

\begin{proposition}
  \label{prop:characterisation-fixed-point-with-open-stabiliser}
  Let $G \leq \Aut(T)$ be a closed subgroup.  Let $g \in G$ be hyperbolic and denote by $x$ the attracting fixed point of $g$.  Then the following statements are equivalent.
  \begin{itemize}
  \item $G_x \leq G$ is open.
  \item There is a compact open subgroup $K \leq G$ such that $g^n K g^{-n} \geq K$ for all $n \in \NN$.
  \item  There is a compact open subgroup $K \leq G$ such that $[K : K \cap g^n K g^{-n}]$, $n \in \NN$, is bounded.
  \item  For all compact open subgroups $K \leq G$ the sequence $[K : K \cap g^n K g^{-n}]$, $n \in \NN$, is bounded.
  \end{itemize}
\end{proposition}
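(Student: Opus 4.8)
The plan is to establish the cycle of implications $(i)\Rightarrow(ii)\Rightarrow(iii)\Rightarrow(iv)\Rightarrow(i)$.

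\textbf{From $(i)$ to $(ii)$.} I would start from a compact open subgroup fixing $x$, which exists by Proposition~\ref{prop:characaterisation-open-stabiliser}; a compact group fixing a boundary point fixes a vertex, hence fixes the geodesic ray from that vertex to $x$, and this ray eventually runs along the axis of $g$. So there is a vertex $\rho$ on the axis of $g$ that is fixed by some compact open subgroup, and I would then take $K:=G_\rho\cap G_x$. This is compact (it sits inside the vertex stabiliser $G_\rho$) and open (it is the intersection of the open subgroups $G_\rho$ and $G_x$). Since $x$ is the attracting fixed point of $g$ and $\rho$ lies on its axis, $g^n\rho$ lies on the ray $[\rho,x)$ for every $n\in\NN$, and any automorphism fixing $\rho$ and $x$ fixes that ray pointwise; combined with $g^nG_xg^{-n}=G_x$ this gives $K=G_\rho\cap G_x\subseteq G_{g^n\rho}\cap G_x=g^nKg^{-n}$, which is $(ii)$.

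\textbf{From $(ii)$ to $(iii)$ to $(iv)$.} The first step is immediate, since $(ii)$ forces $K\cap g^nKg^{-n}=K$, so the index equals $1$. For $(iii)\Rightarrow(iv)$ I would run the standard commensurability argument: if $[K:K\cap g^nKg^{-n}]\le C$ for all $n$ and $L$ is any other compact open subgroup, put $M=K\cap L$ (compact open, of finite index in both $K$ and $L$) and apply repeatedly the elementary bound $[A:A\cap B]\le[A':B]$ for subgroups $A,B$ of $A'$, first to transfer the bound from $K$ to $M$ and then from $M$ to $L$, obtaining a bound on $[L:L\cap g^nLg^{-n}]$ uniform in $n$.

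\textbf{From $(iv)$ to $(i)$.} Here I would argue by contraposition. Assuming $G_x$ is not open, pick $\rho$ on the axis of $g$ and take $K=G_\rho$; then $g^nKg^{-n}=G_{g^n\rho}$ and, since an automorphism fixing two vertices fixes the geodesic between them, $K\cap g^nKg^{-n}=G_{[\rho,g^n\rho]}$, the pointwise stabiliser of a finite segment of the axis. Hence $[K:K\cap g^nKg^{-n}]=|G_\rho\cdot g^n\rho|$ by orbit--stabiliser. Since $G_x$ is not open, Proposition~\ref{prop:characaterisation-open-stabiliser} forces $G_\rho x$ to be infinite (were it finite, $G_\rho\cap G_x$, being of finite index in $G_\rho$, would be a compact open subgroup fixing $x$). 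Given $m\in\NN$, choose distinct $x_0=x,x_1,\dots,x_m\in G_\rho x$, say $x_i=k_ix$ with $k_i\in G_\rho$; since they are distinct there is $N$ with $\rmm_\rho(x_i,x_j)<N$ for all $i\ne j$. As $\rho$ lies on the axis, $\rmd(\rho,g^n\rho)\to\infty$, so for $n$ large the vertex $k_ig^n\rho$ is the vertex of $[\rho,x_i)$ at distance $\rmd(\rho,g^n\rho)\ge N$ from $\rho$; if $k_ig^n\rho=k_jg^n\rho$ for $i\ne j$, the rays $[\rho,x_i)$ and $[\rho,x_j)$ would share a vertex at distance $\ge N$ from $\rho$, forcing $\rmm_\rho(x_i,x_j)\ge N$, a contradiction. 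Thus $|G_\rho\cdot g^n\rho|\ge m+1$ for $n$ large, and letting $m\to\infty$ contradicts $(iv)$.

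\textbf{Expected main obstacle.} The delicate point is the last implication $(iv)\Rightarrow(i)$: one must convert the purely qualitative fact that the boundary orbit $G_\rho x$ is infinite into the quantitative growth $|G_\rho\cdot g^n\rho|\to\infty$. The bridge is the rigidity of geodesics in a tree, which lets one detect separation of two boundary points at scale $N$ through separation of the corresponding tree vertices at every scale $\ge N$; that such scales are attained along the vertices $g^n\rho$ is precisely hyperbolicity of $g$. By comparison, $(i)\Rightarrow(ii)$ is a short geometric manipulation and $(iii)\Rightarrow(iv)$ is routine index bookkeeping.
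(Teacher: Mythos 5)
Your proof is correct. The implications $(i)\Rightarrow(ii)\Rightarrow(iii)\Rightarrow(iv)$ run essentially as in the paper: the same subgroup $K=G_\rho\cap G_x=G_{[\rho,x)}$ for $\rho$ on the axis, the same trivial index computation, and the same commensuration bookkeeping (your detour through a compact open subgroup fixing a vertex in $(i)\Rightarrow(ii)$ is superfluous, since $G_\rho$ is open for every vertex $\rho$ and $G_x$ is open by hypothesis, but it does no harm). The genuine divergence is in $(iv)\Rightarrow(i)$. The paper argues directly: with $K=G_\rho$ the groups $K\cap g^nKg^{-n}=G_{[\rho,g^n\rho]}$ form a descending chain of compact open subgroups whose indices in $K$ are bounded, so the chain is eventually stationary, whence $G_{[\rho,x)}=\bigcap_n G_{[\rho,g^n\rho]}$ is open and $G_x\supseteq G_{[\rho,x)}$ is open. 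You instead argue by contraposition, converting infiniteness of the boundary orbit $G_\rho x$ into unbounded growth of the vertex orbits $G_\rho\cdot g^n\rho$ via the uniqueness of geodesics and the identity $[G_\rho:G_\rho\cap G_{g^n\rho}]=|G_\rho\cdot g^n\rho|$; this is sound, and the separation-at-scale-$N$ argument with $\rmm_\rho$ is exactly the right bridge. The paper's chain-stationarity argument is shorter and avoids any counting, but yours has the mild virtue of making visible the quantitative link between the index $[K:K\cap g^nKg^{-n}]$ and the geometry of the orbit $G_\rho x$ in $\partial T$, which is the same mechanism exploited elsewhere in the paper (e.g.\ in Proposition \ref{prop:characaterisation-open-stabiliser}).
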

\begin{proof}
  Assume that $G_x \leq G$ is open.  Let $\rho$ be a vertex of $T$ on the axis of $g$.  Then $K := G_x \cap G_\rho = G_{[\rho, x)}$ is a compact open subgroup of $G$.  We have $g^n K g^{-n} = G_{[g^n \rho, x)} \geq G_{[\rho, x)} = K$ for all $n \in \NN$.

  Assume that there is $K \leq G$ such that $g^n K g^{-n} \geq K$ for all $n \in \NN$.  Then $K \cap g^n K g^{-n} = K$ and hence $[K : K \cap g^n K g^{-n}] = 1$ is bounded in $n$.

  Assume that there is a compact open subgroup $L \leq G$ such that $[L : L \cap g^n L g^{-n}]$ is bounded in $n$.  Let $K \leq G$ be a compact open subgroup.  Then
  \begin{align*}
    [K : K \cap g^n K g^{-n}]
    & \leq
    [K : K \cap g^n (K \cap L) g^{-n}] \\
    & \leq
    [K : K \cap g^n L g^{-n}] [L : K \cap L] \\
    & \leq
    [(K \cap L) : (K \cap L) \cap g^n L g^{-n}]  [K : K \cap L] [L :  K \cap L]  \\
    & \leq
    [L : L \cap g^n L g^{-n}]   [K : K \cap L] [L :  K \cap L]
    \eqstop
  \end{align*}
  It follows that $[K : K \cap g^n K g^{-n}]$ is bounded in $n$.

  Assume that for all compact open subgroups $K \leq G$ the sequence $[K : K \cap g^n K g^{-n}]$, $n \in \NN$ is bounded.  Let $\rho$ be a vertex on the axis of $g$.  Then $K := G_\rho$ is a compact open subgroup and $K \cap g^n K g^{-n} = G_{[\rho, g^n \rho]}$ is a descending sequence of compact open subgroups.  Since $[K : K \cap g^n K g^{-n}]$ is bounded in $n$, the sequence $K \cap g^n K g^{-n}$ becomes stationary.  So $\bigcap_{n \geq 0} K \cap g^n K g^{-n}$ is an open subgroup.  Then also  $G_x \geq G_{[\rho, x)} = \bigcap_{n \geq 0} K \cap g^n K g^{-n}$ is open.
\end{proof}

Before we describe hyperbolic elements both of whose fixed points have an open stabiliser, we need to note the following well-known lemma.
\begin{lemma}
  \label{lem:strucutre-of-point-stabilisers}
  Let $G \leq \Aut(T)$ be a closed subgroup fixing a point $x \in \partial T$.  Let $H$ be the set of all elliptic elements of $G$.  Then $H$ is an ascending union of compact open subgroups and every element of $G \setminus H$ is hyperbolic.
  For every $g \in G \setminus H$ whose attracting fixed point is $x$ there is a compact open subgroup $K \leq H$ such that $gKg^{-1} \geq K$ and $H = \bigcup_{n \in \ZZ} g^n K g^{-n}$.
\end{lemma}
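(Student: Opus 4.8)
The plan is to fix a geodesic ray representing $x$ and track how elements of $G$ move it. The single geometric fact used throughout is this: if $h\in G$ fixes a vertex $v$, then since $h$ also fixes $x$ it carries the geodesic $[v,x)$ onto $[hv,hx)=[v,x)$, preserving the ordering of vertices by distance from $v$, hence fixes $[v,x)$ pointwise; in particular $G_v\le G_{v'}$ whenever $v'\in[v,x)$. (It is convenient, though not strictly necessary, to observe that the resulting ``shift'' $g\mapsto\beta(g)\in\ZZ$ determined by $g\rho_n=\rho_{n+\beta(g)}$ for large $n$ is a homomorphism $G\to\ZZ$ with kernel $H$.)

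For the first assertion, I would fix any vertex $\rho$ and write $[\rho,x)=(\rho_0=\rho,\rho_1,\rho_2,\dots)$. Each $G_{\rho_n}=G\cap\Aut(T)_{\rho_n}$ is a compact open subgroup of $G$ (a vertex stabiliser in $\Aut(T)$ is compact open and $G$ is closed) consisting of elliptic elements, and the geometric fact gives $G_{\rho_n}\le G_{\rho_{n+1}}$. Conversely, an elliptic $h\in H$ fixes some vertex $v$, hence fixes $[v,x)$ pointwise, and since $[v,x)$ and $[\rho,x)$ eventually coincide, $h$ fixes $\rho_n$ for all large $n$. So $H=\bigcup_n G_{\rho_n}$ is an ascending union of compact open subgroups. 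For the second assertion, I would rule out inversions: an inversion in $G$ would swap the endpoints of some edge $e$, hence swap the two components of $T\setminus\{e\}$ and the two disjoint subsets of $\partial T$ these determine, thereby moving $x$ — impossible since $G\le G_x$. Thus every element of $G\setminus H$ is hyperbolic.

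For the last assertion, let $g\in G\setminus H$ be hyperbolic with attracting fixed point $x$, and choose $\rho$ on the axis of $g$. Then $[\rho,x)$ is a subray of the axis, $g$ translates along the axis towards $x$ by its translation length $\ell$, and $g^n\rho$ is the vertex of $[\rho,x)$ at distance $n\ell$ from $\rho$ for each $n\ge0$. Put $K:=G_\rho$, a compact open subgroup of $G$ contained in $H$. Then $gKg^{-1}=G_{g\rho}$, and since $g\rho\in[\rho,x)$ the geometric fact yields $K=G_\rho\le G_{g\rho}=gKg^{-1}$. Moreover $g^nKg^{-n}=G_{g^n\rho}=G_{\rho_{n\ell}}$, and because the chain $(G_{\rho_m})_m$ from the first part is ascending, these subgroups run cofinally through it as $n\to\infty$; hence $\bigcup_{n\in\ZZ}g^nKg^{-n}=\bigcup_m G_{\rho_m}=H$.

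I do not anticipate a genuine obstacle here; the argument is elementary tree geometry resting on the reusable observation that fixing a vertex together with $x$ forces pointwise fixing of the ray to $x$. The only points needing a little care are the indexing along the axis (the translation length may exceed $1$, so $g^n\rho=\rho_{n\ell}$ rather than $\rho_n$) and the verification that the ``sparse'' union $\bigcup_n G_{g^n\rho}$ still exhausts $H$, which works precisely because $(G_{\rho_m})_m$ is ascending, so $G_{\rho_m}\le G_{\rho_{n\ell}}$ as soon as $n\ell\ge m$.
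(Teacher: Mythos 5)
Your proof is correct and follows essentially the same route as the paper: both rest on the observation that an element fixing a vertex $v$ and the end $x$ must fix the ray $[v,x)$ pointwise, so the vertex stabilisers along a ray toward $x$ form an ascending chain of compact open subgroups exhausting $H$, and for the hyperbolic $g$ one takes $K$ to be the stabiliser of a vertex on its axis, whose $g$-conjugates run cofinally through that chain. Your write-up is somewhat more explicit than the paper's (ruling out inversions directly and tracking the translation-length indexing), but the argument is the same.
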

\begin{proof}
  Let $g,h \in G$ be elliptic. There are $\rho, \eta \in T$ fixed by $g$ and $h$ respectively.  Let $\xi$ be a point in $[\rho, x) \cap [\eta, x)$.  Then $g$ and $h$ fix $\xi$, so $gh$ fixes $\xi$ and it is hence elliptic.  Since $H$ is the union of vertex stabilisers, it is open in $G$.  Since $G$ fixes $x \in \partial T$, it consists only of elliptic and hyperbolic elements.  Hence $G \setminus H$ consists entirely of hyperbolic elements.

Now let $g \in G \setminus H$ be a hyperbolic whose attracting fixed point is $X$.  Let $\rho$ be a vertex on the axis of $g$ and $K = G_{[\rho,x)}$.  Then $g K g^{-1} = G_{[g \rho, x)} \geq K$.  If $k \in H$, then there is $\rho' \in T$ fixed by $k$.  Since $k x = x$, $k$ fixes all points on the geodesic $[\rho', x)$.  So $k \in G_{[\rho', x) \cap [\rho, x)} \subset \bigcup_{n \in \ZZ} g^n K g^{-n}$.
\end{proof}

\begin{proposition}
  \label{prop:characterisation-both-fixed-points-with-open-stabiliser}
  Let $G \leq \Aut(T)$ be a closed subgroup and $g \in G$ hyperbolic such that one of its fixed points in $\partial T$ has an open stabiliser in $G$.  Then the following statements are equivalent.
  \begin{itemize}
  \item Both fixed points of $g$ have an open stabiliser in $G$.
  \item $g$ lies in the kernel $G_0$ of the modular function of $G$.
  \item $g$ normalises a compact open subgroup of $G$.
  \end{itemize}
  In particular, if $g \in G_0$ is hyperbolic, then either both or none of its fixed points have an open stabiliser.
\end{proposition}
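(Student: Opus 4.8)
The plan is to prove the three conditions equivalent by running the cycle (A) $\Rightarrow$ (C) $\Rightarrow$ (B) $\Rightarrow$ (A), where (A) abbreviates ``both fixed points of $g$ have an open stabiliser in $G$'', (B) abbreviates ``$g \in G_0$'', and (C) abbreviates ``$g$ normalises a compact open subgroup of $G$''. A preliminary observation is that all three statements are unchanged when $g$ is replaced by $g^{-1}$: statement (A) concerns the unordered pair of fixed points of $g$, one has $\Delta(g) = 1$ if and only if $\Delta(g^{-1}) = 1$, and $g$ normalises $K$ if and only if $g^{-1}$ does. Since passing from $g$ to $g^{-1}$ interchanges the attracting and the repelling fixed point, I may assume from now on that the fixed point whose stabiliser is open, as provided by the hypothesis, is the attracting fixed point $x$ of $g$; write $y$ for the repelling fixed point, $A$ for the axis of $g$, and fix a vertex $\rho$ on $A$.

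For (A) $\Rightarrow$ (C): an automorphism that fixes a vertex and a point of $\partial T$ must fix the entire geodesic joining them, by uniqueness of geodesics in a tree, and hence $G_A = G_x \cap G_y \cap G_\rho$. Under hypothesis (A) this is an intersection of open subgroups, so it is open, and it lies inside the compact group $G_\rho$, so it is compact. As $g$ acts on $A$ by translation we have $gA = A$, whence $g G_A g^{-1} = G_{gA} = G_A$; thus $g$ normalises the compact open subgroup $G_A$, which is (C). The implication (C) $\Rightarrow$ (B) is then immediate from the index description of the modular function recalled in Section~\ref{sec:totoally-disconnected-groups}: if $gKg^{-1} = K$ for some compact open $K$, then $\Delta(g) = [K : K \cap gKg^{-1}]/[K : K \cap g^{-1}Kg] = 1$.

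The substance of the proof is (B) $\Rightarrow$ (A), and this is the step I expect to be the main obstacle, though it turns out to be short once Proposition~\ref{prop:characterisation-fixed-point-with-open-stabiliser} is available. Since $G_x$ is open, that proposition tells us the indices $[G_\rho : G_\rho \cap g^n G_\rho g^{-n}]$ are bounded as $n$ ranges over $\NN$. Applying the index description of $\Delta$ to the compact open subgroup $G_\rho$ and to the element $g^n$, and using $\Delta(g^n) = \Delta(g)^n = 1$, we obtain $[G_\rho : G_\rho \cap g^{-n} G_\rho g^{n}] = [G_\rho : G_\rho \cap g^{n} G_\rho g^{-n}]$ for every $n$, so the indices on the left are bounded too. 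Now $y$ is the attracting fixed point of the hyperbolic element $g^{-1}$ and $(g^{-1})^n G_\rho (g^{-1})^{-n} = g^{-n} G_\rho g^{n}$, so Proposition~\ref{prop:characterisation-fixed-point-with-open-stabiliser}, this time applied to $g^{-1}$, yields that $G_y$ is open; together with the openness of $G_x$ this is exactly (A). The conceptual point is that writing $\Delta$ as a ratio of indices turns the hypothesis $\Delta(g)=1$ into a symmetry between the attracting and the repelling ends of the axis, after which one only needs to quote Proposition~\ref{prop:characterisation-fixed-point-with-open-stabiliser} in both directions.

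Finally, for the ``in particular'' assertion: suppose $g \in G_0$ is hyperbolic and at least one of its two fixed points has an open stabiliser. Then the hypothesis of the proposition is satisfied and (B) holds, so the implication (B) $\Rightarrow$ (A) shows that both fixed points have an open stabiliser. Hence the case of exactly one open stabiliser cannot occur, i.e.\ either both or none do.
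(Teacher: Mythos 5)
Your proof is correct, but it runs the cycle in the opposite direction from the paper and replaces the two harder implications with different arguments. The paper proves (A) $\Rightarrow$ (B) $\Rightarrow$ (C) $\Rightarrow$ (A): for (A) $\Rightarrow$ (B) it takes $K$ with $g^nKg^{-n} \geq K$ and computes $[K : K \cap g^{-n}Kg^n] = \Delta(g)^{-n}$, so that boundedness of these indices (from openness of the repelling stabiliser) forces $\Delta(g) = 1$; for (B) $\Rightarrow$ (C) it invokes Lemma \ref{lem:strucutre-of-point-stabilisers} to produce $K$ with $gKg^{-1} \geq K$ and then uses invariance of Haar measure to upgrade this to equality. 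You instead prove (A) $\Rightarrow$ (C) directly and geometrically, by observing that the pointwise stabiliser of the axis equals $G_x \cap G_y \cap G_\rho$, hence is compact open under (A), and is normalised by $g$ since $g$ preserves its own axis; your (C) $\Rightarrow$ (B) is immediate from the index formula for $\Delta$; and your (B) $\Rightarrow$ (A) exploits $\Delta(g^n)=1$ to transfer the boundedness of $[K : K \cap g^nKg^{-n}]$ to $[K : K \cap g^{-n}Kg^{n}]$ and then applies Proposition \ref{prop:characterisation-fixed-point-with-open-stabiliser} to $g^{-1}$. Both proofs lean on Proposition \ref{prop:characterisation-fixed-point-with-open-stabiliser} and the index description of the modular function, but your route avoids Lemma \ref{lem:strucutre-of-point-stabilisers} and the Haar-measure argument entirely, at the price of introducing the axis stabiliser; the paper's route keeps everything in the language of commensurated compact open subgroups. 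The reduction to the attracting fixed point and the ``in particular'' statement are handled the same way in both.
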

\begin{proof}
  Throughout the proof, $g \in G$ denotes a hyperbolic element such that $G_x$ is open for one of its fixed points $x \in \partial T$.  Replacing $g$ by its inverse if necessary, we may assume that its attracting fixed point has an open stabiliser in $G$.

  By Proposition \ref{prop:characterisation-fixed-point-with-open-stabiliser}, there is a compact open subgroup $K \leq G$ such that $g^n K g^{-n} \geq K$ for all $n \in \NN$.  Then
  \begin{equation*}
    [K : K \cap g^{-n} K g^n] = [g^n K g^{-n} : K] = \Delta(g^{-n}) = \Delta(g)^{-n}
    \eqstop
  \end{equation*}
  
  If the repelling fixed point of $g$ has an open stabiliser, Proposition \ref{prop:characterisation-fixed-point-with-open-stabiliser} and the previous equation show that $\Delta_G(g)^{-1}$ is a positive integer, whose powers are bounded.  Hence $\Delta_G(g) = 1$, which proves that $g \in G_0$.

  If $g \in G_0$, let $K \leq G$ be a compact open subgroup such that $g K g^{-1} \geq K$, which is provided by Lemma \ref{lem:strucutre-of-point-stabilisers}.  Since left and right multiplication with $g$ preserve the left Haar measure of $G$, this implies $g K g^{-1} = K$.  So $g$ normalises a compact open subgroup of $G$.

  Finally if $g$ normalises some compact open subgroup $K \leq G$, then Proposition \ref{prop:characterisation-fixed-point-with-open-stabiliser} applies to $g$ and $g^{-1}$, showing that both fixed points of $g$ have an open stabiliser.

  Since an arbitrary hyperbolic element $g \in G$ normalises a compact open subgroup of $G$ if and only if $g^{-1}$ does so, the last statement of the proposition follows.
\end{proof}

Let us now describe condition ($*$) as it is mentioned in the introduction.  To this end, we need a characterisation of amenable subgroups of $\Aut(T)$.
\begin{proposition}[{Adams-Ballmann \cite{adamsballmann98}}]
  \label{prop:adams-ballmann}
  Let $G \leq \Aut(T)$ be a closed subgroup. 
  \begin{itemize}
  \item If $G$ is amenable, then it fixes some point in $V(T) \cup E(T) \cup \partial T$.  In case $G$ fixes an edge of $T$, then it contains an index 2 subgroup fixing a vertex of $T$.
  \item If $G$ fixes a point in $V(T) \cup E(T) \cup \partial T$, then it is amenable.
  \end{itemize}
\end{proposition}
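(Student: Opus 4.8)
The two implications are of quite different character, and the plan is to handle them separately. For the implication \emph{fixing a point $\Rightarrow$ amenable} I would argue by cases on the type of fixed point. If $G$ fixes a vertex $v$, then $G \leq \Aut(T)_v$, a compact group, so $G$ is amenable. If $G$ fixes an edge $e=\{v,w\}$ but no vertex, then the subgroup $G':=G_v=G_w$ has index exactly $2$ in $G$ (the nontrivial coset consisting of the elements inverting $e$); since $G'\leq\Aut(T)_v$ is compact, $G$ is itself compact, hence amenable, and $G'$ is the required index-$2$ subgroup fixing a vertex (if $G$ already fixes a vertex, take $G'=G$). If $G$ fixes a point $x\in\partial T$, I would use the Busemann homomorphism: after fixing a base vertex $\rho$, the eventual signed displacement of an element $g\in G$ along the geodesic ray $[\rho,x)$ defines a homomorphism $\beta\colon G\to\ZZ$ (it is additive precisely because every element of $G$ stabilises $x$), whose kernel is exactly the set $H$ of elliptic elements of $G$. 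By Lemma~\ref{lem:strucutre-of-point-stabilisers}, $H$ is an ascending union of compact open subgroups, hence amenable; since $G/H$ embeds into $\ZZ$ it is amenable, so $G$ is amenable, being an extension of an amenable group by an amenable group.

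For the converse, \emph{amenable $\Rightarrow$ fixing a point}, this is Adams--Ballmann's theorem for trees, which I would either quote directly from \cite{adamsballmann98} or recover as follows. If $T$ is finite then $\partial T=\varnothing$ and $G$ permutes the vertices of the finite tree $T$, hence fixes its combinatorial centre, which is a vertex or an edge. If $T$ is infinite, then $\partial T$ is a non-empty compact space in the shadow topology on which $G$ acts by homeomorphisms, so amenability of $G$ yields a $G$-invariant Borel probability measure $\nu$ on $\partial T$. I would then feed $\nu$ into the standard equivariant combinatorial barycentre construction: such a measure determines, functorially in $\Aut(T)$, either a single end of $T$ (precisely when some atom of $\nu$ has mass $>\tfrac12$), or a pair of ends --- equivalently a bi-infinite geodesic line $L$ --- (precisely when $\nu=\tfrac12\delta_{\xi_1}+\tfrac12\delta_{\xi_2}$), or, in the remaining diffuse case, a non-empty finite \emph{balanced} subpath of $T$ whose centre is a vertex or an edge. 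Since $\nu$ is $G$-invariant, $G$ then fixes the resulting end, stabilises the resulting line $L$, or fixes the centre of the resulting path; in the middle case either $G$ fixes one of $\xi_1,\xi_2$, or $G$ swaps them and then, exactly as in the edge-inversion case above, $G$ contains an index-$2$ subgroup $G_{\xi_1}=G_{\xi_2}$ fixing a point of $\partial T$.

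The genuinely delicate point --- and the reason the statement is to be read as allowing this line-stabilising alternative --- is precisely this middle, end-swapping configuration: it is the only case in which the invariant measure does not by itself single out a $G$-fixed point of $V(T)\cup E(T)\cup\partial T$, and one is forced to pass to an index-$2$ subgroup. Everything else is routine: the diffuse case reduces to a finite, $\Aut(T)$-equivariant centre-of-mass computation inside $T$ (the balanced subpath is finite because an infinite balanced ray would force an atom of mass $\geq\tfrac12$), while the opposite direction rests only on the compactness of vertex and edge stabilisers in $\Aut(T)$ together with the structural description of point stabilisers in Lemma~\ref{lem:strucutre-of-point-stabilisers}.
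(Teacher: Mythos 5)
The paper does not prove this proposition at all --- it is quoted from Adams--Ballmann --- so the only question is whether your argument stands on its own. Your proof of the second bullet (a fixed point forces amenability) is correct and complete: compactness of vertex and geometric-edge stabilisers handles the first two cases, and for an end stabiliser the Busemann homomorphism $\beta\colon G_x\to\ZZ$ with kernel the set $H$ of elliptic elements, combined with Lemma~\ref{lem:strucutre-of-point-stabilisers} and stability of amenability under extensions, is exactly the standard argument.

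The gap is in the first bullet, and it sits precisely where you flag it. What your invariant-measure argument actually proves is: $G$ fixes a vertex, stabilises a geometric edge, fixes an end, \emph{or stabilises a pair of ends}. The fourth alternative cannot be absorbed into the others: take $T$ the $3$-regular tree, $L$ a bi-infinite geodesic, and $H\leq\Aut(T)$ the setwise stabiliser of $L$. Then $H$ is closed, and it is an extension of the infinite dihedral group by the compact pointwise stabiliser of $L$, hence amenable; yet $H$ contains translations along $L$ and reflections swapping its two ends, so it fixes no vertex, no edge and no point of $\partial T$. Thus the proposition as literally stated is false, and ``reading the statement as allowing the line-stabilising alternative'' is not a proof of the statement but a correction of it --- a correct write-up must either add the pair-of-ends case to the conclusion or add a hypothesis ruling it out (and then one must re-examine each use of the proposition; e.g.\ in Theorem~\ref{thm:characterisation-condition-star} one passes to the index-$2$ subgroup fixing one of the two ends, and in Corollary~\ref{cor:simple-Hecke-algebra} minimality of $G\grpaction{}\partial T$ excludes a finite invariant set of ends). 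Two smaller points: your trichotomy on $\nu$ is not exhaustive --- a single atom of mass exactly $\tfrac12$ with diffuse remainder falls through all three cases, though this is repaired by noting that the set of atoms of mass $\geq\tfrac12$ has at most two elements and is $G$-invariant; and the ``balanced subpath'' step in the diffuse case is asserted rather than carried out, which is acceptable only if one is in any case falling back on the citation to Adams--Ballmann.
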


\begin{theorem}
  \label{thm:characterisation-condition-star}
  Let $G$ be a topological group.  Then the following two conditions on $G$ are equivalent.
  \begin{itemize}
  \item There is a locally finite tree $T$ such that $G \leq \Aut(T)$ as a closed subgroup.  Further, $G$ is non-amenable without any non-trivial compact normal subgroup and there is a compact open subgroup $K \leq G$ such that $\cN_G(K)/K$ contains an element of infinite order.
  \item There is a locally finite thick tree $T$ such that $G \leq \Aut(T)$ as a closed subgroup.  Further, $G$ acts minimally on $\partial T$ and there is some point $x \in \partial T$ such that $G_x \leq G$ is open and $G_x \cap G_0$ contains a hyperbolic element.
  \end{itemize}
\end{theorem}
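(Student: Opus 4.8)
The plan is to establish each implication directly: the first by keeping the given tree, the second after replacing it by a suitable thick tree. Throughout I use Adams--Ballmann (Proposition~\ref{prop:adams-ballmann}), the structure results of this section, and the preliminary facts that vertex and edge stabilisers in $\Aut(T)$ are compact open and that every compact subgroup of $\Aut(T)$ fixes a vertex or an edge.

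\textbf{From the second condition to the first.} One keeps the same tree $T$ — it is locally finite because thick. A thick tree is infinite, so $\partial T$ is infinite; minimality of the action on $\partial T$ rules out a fixed boundary point, and $G$ fixes no vertex or edge, since otherwise $G$ would be compact and could not contain the given hyperbolic element of $G_x\cap G_0$. Hence $G$ is non-amenable, hence non-compact. If $N\trianglelefteq G$ were a non-trivial compact normal subgroup, its fixed-point set in $T$ would be non-empty, convex, and — by normality of $N$ — $G$-invariant; since $G$ is non-compact and acts minimally on $\partial T$, Proposition~\ref{prop:equivalence-minimal-action} forbids proper $G$-invariant subtrees, which forces this fixed-point set to be either all of $T$, so that $N$ is trivial, or a single vertex or edge midpoint, so that $G$ is compact — both contradicting the hypotheses. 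Finally, for the given hyperbolic $g\in G_x\cap G_0$ the point $x$ is one of its two fixed points and $G_x$ is open, so Proposition~\ref{prop:characterisation-both-fixed-points-with-open-stabiliser} provides a compact open subgroup $K\leq G$ normalised by $g$; since $K$ is compact it contains no power $g^n$, $n\geq 1$, all of which are hyperbolic, so the class of $g$ has infinite order in $\cN_G(K)/K$.

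\textbf{From the first condition to the second: reducing to a thick tree.} Non-amenability of $G$ and the absence of a compact normal subgroup let us, by Proposition~\ref{prop:existence-minimal-subtree}, replace $T$ by the minimal $G$-invariant subtree $T_0\leq T$, on which $G$ acts faithfully, and assume $T$ has no proper $G$-invariant subtree. Being the union of the axes of the hyperbolic elements of $G$, the tree $T_0$ has no leaves, and it is not a line (else $G$ embeds in the amenable automorphism group of a line), so it has a branch vertex. Moreover $\partial T_0$ has no isolated point: such a point would correspond to a ray of valency-$2$ vertices escaping to infinity in $T_0$, and deleting all such rays would give a $G$-invariant subtree that is proper yet non-empty (the branch vertex survives), contradicting minimality of $T_0$. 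Therefore every maximal run of valency-$2$ vertices of $T_0$ is a finite segment between two branch vertices, and collapsing each such segment to a single edge produces a locally finite \emph{thick} tree $T'$ with a $G$-equivariant homeomorphism $\partial T'\cong\partial T_0$. A routine check — an automorphism of a tree is determined along such a segment by its values at the two endpoints — shows the induced action realises $G$ as a closed subgroup of $\Aut(T')$; and any proper $G$-invariant subtree of $T'$ would expand back to one of $T_0$, so by Proposition~\ref{prop:equivalence-minimal-action} $G$ acts minimally on $\partial T'$.

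\textbf{Finding the boundary point.} Let $h\in\cN_G(K)$ have infinite order modulo $K$, so $h^n\notin K$ for all $n\neq 0$. Then $h$ is hyperbolic: otherwise $h$, or $h^2$ if $h$ is an inversion, fixes a vertex $v$, and then $K\cap G_v$ is a compact open subgroup normalised by $h$ (since $hv=v$ and $h$ normalises $K$) with $h\in G_v$, so $\langle h\rangle(K\cap G_v)\leq G_v$ is compact and some $h^m$, $m\geq 1$, lies in $K\cap G_v\subseteq K$ — impossible. Let $x$ be the attracting fixed point of $h$, viewed in $\partial T'\cong\partial T_0$ since the axis of $h$ lies in $T_0$. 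From $h^nKh^{-n}=K$ and Proposition~\ref{prop:characterisation-fixed-point-with-open-stabiliser} we get that $G_x$ is open, and then Proposition~\ref{prop:characterisation-both-fixed-points-with-open-stabiliser} gives $h\in G_0$; thus $G_x\cap G_0$ contains the hyperbolic element $h$ and $(T',G,x)$ witnesses the second condition. I expect the reduction to a thick tree to be the main obstacle — ruling out the isolated rays in $T_0$ and checking that collapsing preserves closedness of $G$ in the ambient automorphism group and minimality of the boundary action — whereas the algebraic half, that the infinite-order element is forced to be hyperbolic and to lie in $G_0$, follows essentially immediately from Propositions~\ref{prop:characterisation-fixed-point-with-open-stabiliser} and \ref{prop:characterisation-both-fixed-points-with-open-stabiliser}.
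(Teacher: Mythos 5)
Your proof is correct, and its overall skeleton (pass to the minimal subtree, then invoke Propositions~\ref{prop:characterisation-fixed-point-with-open-stabiliser} and \ref{prop:characterisation-both-fixed-points-with-open-stabiliser} to translate between the normaliser condition and the open-stabiliser condition) matches the paper's. Two of your choices genuinely diverge from the printed argument, and both are worth noting. First, for the reduction to a thick tree the paper simply passes to the minimal invariant subtree $T'$ and asserts ``$T'$ must be thick''; this is not literally true (barycentrically subdivide a thick tree and the minimal subtree acquires valency-$2$ vertices), so your extra work --- showing $\partial T_0$ has no isolated ends, hence that maximal valency-$2$ runs are finite segments, and collapsing them --- is not redundant caution but an actual repair of a gap the paper glosses over. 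Second, to locate the boundary point $x$ in the first-to-second implication, the paper applies Adams--Ballmann (Proposition~\ref{prop:adams-ballmann}) to the non-compact amenable open subgroup $\langle K, g\rangle$ and gets a fixed end, whereas you prove directly that the infinite-order element $h$ must be hyperbolic (via the finite-index/compactness argument at a fixed vertex) and take its attracting fixed point; your route is more elementary and makes the hyperbolicity of $h$ explicit, at the cost of a slightly longer case analysis. One trivial slip: a thick tree is not ``locally finite because thick'' --- thickness says nothing about local finiteness --- but this is harmless since local finiteness is already part of the hypothesis in the second condition. Everything else (the treatment of compact normal subgroups via fixed-point sets and Proposition~\ref{prop:equivalence-minimal-action}, and the final appeal to compactness of $K$ to get infinite order in $\cN_G(K)/K$) is sound and agrees with the paper.
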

\begin{proof}
  Assume that $G$ satisfies our first condition and take $G \leq \Aut(T)$ as in the statement.   Since $G$ is non-amenable there is a minimal $G$-invariant subtree $T' \leq T$ by Proposition \ref{prop:existence-minimal-subtree}.  Note that $T'$ must be thick.  Since $G$ does not contain any compact normal subgroups, we can consider $G \leq \Aut(T')$ by restriction.  Since $G \grpaction{} T'$ admits no proper $G$-invariant subtree, it follows that $G \grpaction{} \partial T'$ is minimal by Proposition \ref{prop:equivalence-minimal-action}.  Let $K \leq G$ be a compact open subgroup and $g \in \cN_G(K)$ an element whose image in $\cN_G(K)/K$ has infinite order.  Then $H := \langle K , g \rangle \leq G$ is a non-compact amenable open subgroup.  So Proposition \ref{prop:adams-ballmann} says that there is $x \in \partial T$ fixed by $H$.  In particular, $H \leq G_x \leq G$ is open.  Moreover, $g \in G_x$ is not contained in any compact subgroup, so $g$ is hyperbolic.  Now $G \leq \Aut(T')$ satisfies all conditions of the second statement.

Now assume that $G$ satisfies the second condition of the theorem and take $G \leq \Aut(T)$ as in the statement.  Note that $G$ is not compact, since it contains a hyperbolic element.  Since $T$ is thick and $G \grpaction{} \partial T$ is minimal, it follows that $G$ is non-amenable.  Moreover, $G \grpaction{} T$ is minimal by Proposition \ref{prop:equivalence-minimal-action}.  So any compact normal subgroup of $G$ fixes $T$ pointwise, showing that it is trivial.  Take $x \in \partial T$ such that $G_x$ is open and $G_x \cap G_0$ contains a hyperbolic element $g$.  By Propositions~\ref{prop:characterisation-fixed-point-with-open-stabiliser}~and~\ref{prop:characterisation-both-fixed-points-with-open-stabiliser} there is a compact open subgroup $K \leq G$ such that $g \in \cN_G(K)$.  Since $g \in G_0$ it follows that $g \in \cN_G(K)$.  Since $g$ is hyperbolic, it is not contained in any compact subgroup of $G$ and hence its image in $\cN_G(K)/K$ has infinite order.  So $G$ satisfies all conditions of the first statement.  This finishes the proof of the theorem.
\end{proof}

\section{Locally compact Powers groups}
\label{sec:locally-compact-powers-groups}

In this section we introduce the notion of a locally compact Powers group.  This generalises an idea of Powers \cite{powers75} and de la Harpe \cite{delaharpe85} to prove \Cstar-simplicity for discrete groups.  We prove an analogue of Powers averaging.  Further we adapt an idea of de la Harpe and Pr{\'e}aux \cite{delaharpepreaux11} used to show \Cstar-simplicity of HNN-extensions in order to prove that a natural class of groups acting on trees are Powers groups.  This justifies our definition of Powers groups in the context this article.  We however insist on the ad hoc character of the next definition.

\begin{definition}[Locally compact Powers group]
  \label{def:lc-powers-group}
  Let $G$ be a locally compact group.  Let $K \leq G$ be a compact open subgroup, $F \subset G \setminus K$ be a compact set and $r \in \NN$.  We say that $G$ satisfies \emph{Powers property} with control $r$ with respect to $K$ and $F$  if the following condition holds.

For all $n \in \NN^\times$ there are elements $g_1, \dotsc, g_n \in G_0$  and a decomposition of $G$ into left $K$-invariant sets $G = C \sqcup D$ such that
  \begin{itemize}
  \item for all $f \in F$ we have $f C \cap C = \emptyset$,
  \item the sets $g_1D, \dotsc, g_n D$ are pairwise disjoint, and
  \item $[K : K \cap g_i K g_i^{-1}] \leq r$ for all $i \in \{1, \dotsc, n\}$.
  \end{itemize}
\end{definition}
If we do not need to specify control, we speak of Powers property with respect to $K$ and $F$ only.

\begin{remark}
  \begin{itemize}
  \item A discrete group $G$ is a Powers group in the sense of \cite{delaharpe85} if and only if it has the Powers property with respect to $\{e\}$ and $F$ for all finite sets $F \subset G \setminus \{e\}$.
  \item For the results of this paper it is important to introduce explicit control over the subgroup $K$ and the constant $r$ in our formulation of Powers property.  This becomes clear from Proposition~\ref{prop:lc-powers-group} and the proof of Theorem \ref{thm:cstar-simplicity}.
  \item We may replace the control condition \mbox{``$[K : K \cap g_i K g_i^{-1}] \leq r$ for all $i \in \{1, \dotsc, n\}$''} by  \mbox{``$[K : K \cap g_i^{-1} K g_i] \leq r$ for all $i \in \{1, \dotsc, n\}$''}, since $g_1, \dotsc, g_n \in G_0$. Indeed, $[K : K \cap g K g^{-1}] = \Delta(g) [K : K \cap g^{-1} K g]$ for all $g \in G$.
  \end{itemize}
\end{remark}

The next proposition generalises Powers averaging to locally compact Powers groups.  We adapt the proof given in \cite{delaharpe07-survey}.
\begin{proposition}
  \label{prop:powers-averaging}
  Let $G$ be a locally compact group.  Assume that $G$ has the Powers property with control $r \in \NN$ with respect to the compact open subgroup $K \leq G$ and the compact set $F \subset G \setminus K$ .  Then for all $x \in \contc(G)$ whose support lies in $F$ and all  $\veps > 0$ there is $n \in \NN$ and elements $g_1, \dotsc, g_n \in G_0$ satisfying
  \begin{equation*}
    \| \frac{1}{n} \sum_{i = 1}^n u_{g_i} (x p_K) u_{g_i}^* \| < \veps
    \eqcomma
  \end{equation*}
  and $[K : K \cap g_i K g_i^{-1}] \leq r$ for all $i \in \{1, \dotsc, n\}$.
\end{proposition}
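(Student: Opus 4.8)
The plan is to run the Powers averaging argument of \cite{delaharpe07-survey} with $x p_K$ playing the role that a finitely supported function plays in the discrete case. If $x = 0$ there is nothing to prove, so assume $x \neq 0$ and fix $n \in \NN^\times$ with $2\|x\| / \sqrt n < \veps$; I will show that \emph{any} data provided by the Powers property for this $n$ already works. So, invoking Powers property with control $r$ with respect to $K$ and $F$, obtain $g_1, \dotsc, g_n \in G_0$ with $[K : K \cap g_i K g_i^{-1}] \leq r$ for all $i$, together with a decomposition $G = C \sqcup D$ into left $K$-invariant sets such that $fC \cap C = \emptyset$ for all $f \in F$ and $g_1 D, \dotsc, g_n D$ are pairwise disjoint. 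These $g_1, \dotsc, g_n$ are the elements appearing in the conclusion; the inequalities $[K : K \cap g_i K g_i^{-1}] \leq r$ and the memberships $g_i \in G_0$ are simply handed on and enter nowhere in the estimate below.

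Write $P_C$ and $P_D$ for the orthogonal projections of $\Ltwo(G)$ onto $\Ltwo(C)$ and $\Ltwo(D)$, that is, multiplication by $\mathbb{1}_C$ and $\mathbb{1}_D$; then $P_C + P_D = 1$ and $u_g P_C u_g^* = P_{gC}$, $u_g P_D u_g^* = P_{gD}$ for every $g \in G$. The heart of the matter is the identity
\begin{equation*}
  P_C (x p_K) P_C = 0
  \eqstop
\end{equation*}
To see it, note that $x p_K \in \contc(G)$ is supported in $\supp(x) K \subseteq FK$, so if $P_C (x p_K) P_C$ were nonzero there would be $g \in C$ and $h = fk$ with $f \in F$, $k \in K$ and $h^{-1} g \in C$; since $C$ is left $K$-invariant one has $kC = C$, hence $h^{-1} g = k^{-1} f^{-1} g \in C$ forces $f^{-1} g \in C$, i.e.\ $g \in fC \cap C$, contradicting $fC \cap C = \emptyset$. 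Expanding $x p_K = (P_C + P_D)(x p_K)(P_C + P_D)$ and deleting the vanishing term $P_C(x p_K)P_C$ then yields
\begin{equation*}
  x p_K = (x p_K) P_D + P_D (x p_K) P_C
  \eqstop
\end{equation*}

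Conjugating each side by $u_{g_i}$, summing over $i = 1, \dotsc, n$ and dividing by $n$, and using $u_{g_i}(x p_K) P_D u_{g_i}^* = S_i P_{g_i D}$ and $u_{g_i} P_D (x p_K) P_C u_{g_i}^* = P_{g_i D} S_i P_{g_i C}$ where $S_i := u_{g_i} (x p_K) u_{g_i}^*$, the problem reduces to bounding the two sums $\frac{1}{n}\sum_i S_i P_{g_i D}$ and $\frac{1}{n}\sum_i P_{g_i D} S_i P_{g_i C}$. Since $g_1 D, \dotsc, g_n D$ are pairwise disjoint, the projections $P_{g_i D}$ have pairwise orthogonal ranges, so $\sum_i \|P_{g_i D}\xi\|^2 \leq \|\xi\|^2$ for every $\xi$. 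Using $\|S_i\| = \|x p_K\| \leq \|x\|$, the triangle inequality and Cauchy--Schwarz give $\bigl\|\sum_i S_i P_{g_i D}\bigr\| \leq \sqrt n\, \|x\|$; and since the vectors $P_{g_i D} S_i P_{g_i C}\xi$ lie in the pairwise orthogonal subspaces $\Ltwo(g_i D)$ with $\|P_{g_i D} S_i P_{g_i C}\xi\| \leq \|x\|\,\|P_{g_i C}\xi\| \leq \|x\|\,\|\xi\|$, also $\bigl\|\sum_i P_{g_i D} S_i P_{g_i C}\bigr\| \leq \sqrt n\, \|x\|$. Dividing by $n$ and adding the two estimates,
\begin{equation*}
  \Bigl\| \frac{1}{n} \sum_{i=1}^n u_{g_i} (x p_K) u_{g_i}^* \Bigr\| \leq \frac{2\|x\|}{\sqrt n} < \veps
  \eqstop
\end{equation*}

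The only step requiring genuine care is the vanishing $P_C(x p_K) P_C = 0$, and this is precisely where the specific shape of the Powers property is used: demanding that the pieces $C, D$ of the partition be \emph{left $K$-invariant} is exactly what lets the factor $K$ sitting on the right of $\supp(x) \subseteq F$ be absorbed, reducing matters to the condition $fC \cap C = \emptyset$. Everything else is the routine orthogonality bookkeeping familiar from the discrete setting, and the constant $2$ above is immaterial — only the decay in $n$ matters. The control $r$ is unused here; its relevance surfaces only in Proposition~\ref{prop:lc-powers-group} and in the proof of Theorem~\ref{thm:cstar-simplicity}.
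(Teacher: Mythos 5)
Your argument is correct and is essentially the paper's proof: the same decomposition $G = C \sqcup D$ from the Powers property, the same vanishing of the $C$-to-$C$ corner of $x p_K$ (the paper proves $q_i u_{g_i} u_f p_K u_{g_i}^* q_i = 0$ for coset representatives $f$ of $F/K$, which is your $P_C(xp_K)P_C = 0$ conjugated by $u_{g_i}$), and the same two $\sqrt{n}$ orthogonality estimates yielding the bound $2\|x\|/\sqrt{n}$. The only differences are presentational — you absorb the right $K$-factor of the support into $C$ inline rather than first replacing $F$ by $FK$, and you conjugate after splitting rather than before.
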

\begin{proof}
  Take $K \leq G$, $F \subset G \setminus K$ and $r \in \NN$ as in the statement of the proposition.   Checking the definition of Powers property, we see that we may assume $F = F K$.  Let $x \in \contc(G)$ have support in $F$ and let $\veps > 0$.  For all $n \in \NN^\times$ there are elements $g_1, \dotsc, g_n \in G_0$ and a decomposition of $G$ into $K$-invariant sets $G = C \sqcup D$ such that
  \begin{itemize}
  \item for all $f \in F \setminus K$ we have $f C \cap C = \emptyset$,
  \item the sets $g_1D, \dotsc, g_n D$ are pairwise disjoint, and
  \item $[K : K \cap g_i K g_i^{-1}] \leq r$ for all $i \in \{1, \dotsc, n\}$.
  \end{itemize}
  Let $\tilde F$ be a set of representatives for $F / K$ and write
  \begin{equation*}
    x p_K = \sum_{f \in \tilde F} x_f u_f p_K
  \end{equation*}
  for some scalars $x_f \in \CC$, $f \in \tilde F$.  Since $C$ is $K$-invariant, it is open.  For $i \in \{1, \dotsc, n\}$ we may consider the orthogonal projections
\begin{equation*}
  q_i: \Ltwo(G) \ra \Ltwo(g_iC)
  \eqstop
\end{equation*}
Then for all $f \in F$
\begin{equation*}
  u_f p_K u_{g_i}^* q_i \Ltwo(G)
  =
  \Ltwo(f C)
  \qquad \text{ and } \qquad
  u_{g_i}^* q_i \Ltwo(G)
  =
  \Ltwo(C)
  \eqstop
\end{equation*}
So
\begin{equation*}
  \langle q_i u_{g_i}  u_f p_K u_{g_i}^* q_i \Ltwo(G), \Ltwo(G) \rangle
  =
  \langle u_f p_K u_{g_i}^* q_i \Ltwo(G), u_{g_i}^* q_i \Ltwo(G) \rangle
  =
  \langle \Ltwo(fC), \Ltwo(C) \rangle
  =
  \{ 0 \}
  \eqstop
\end{equation*}
This shows $q_i u_{g_i}  u_f p_K u_{g_i}^* q_i = 0$.  Moreover, the images of $(1 - q_i) u_{g_i}  u_f p_K u_{g_i}^*$, $i \in \{1,\dotsc, n\}$ are pairwise orthogonal.  Also, the supports of $q_i u_{g_i}  u_f p_K u_{g_i}^* (1 - q_i)$, $i \in \{1, \dotsc, n\}$ are pairwise orthogonal.  We can proceed to the following estimate.
\begin{align*}
  & \quad 
  \| \frac{1}{n} \sum_{i = 1}^n u_{g_i} (xp_K )u_{g_i}^* \| \\
  & \leq
  \frac{1}{n} \| \sum_{i = 1}^n (1- q_i) u_{g_i} x p_K u_{g_i}^* \| +
  \frac{1}{n} \| \sum_{i = 1}^n q_i u_{g_i} x p_K u_{g_i}^* (1 - q_i) \| \\
  & \leq 
  \frac{\sqrt{n}}{n} \Bigl(
  \sup \bigl \{\| (1-q_i)u_{g_i} x p_K u_{g_i}^*\| \amid i \in \{1, \dotsc, n\} \bigr \} +
  \sup \bigl  \{\| q_i u_{g_i} x p_K u_{g_i}^* (1 - q_i)\| \amid i \in \{1, \dotsc, n\} \bigr \}
  \Bigr ) \\
  & \leq
  \frac{2\sqrt{n}}{n}   \|x\|
  \eqstop
\end{align*}
Taking $n$ big enough so that $\frac{2 \sqrt{n}}{n}\|x\| < \veps$ finishes the proof.
\end{proof}

In the rest of the section we are going to prove that groups satisfying condition ($*$) have the Powers property with respect to specific compact open subgroups.  To this end, we need an abundance of pairwise transverse hyperbolic elements with uniform control over how well they commensurate compact open subgroups. 
\begin{lemma}
  \label{lem:many-hyperbolic-elements}
  Let $G \leq \Aut(T)$ be a closed non-amenable subgroup acting minimally on $\partial T$.  Assume that there is $x \in \partial T$ such that $G_x$ is open and $G_x \cap G_0$ contains a hyperbolic element.  Then for every non-empty open set $\cO \subset \partial T$ there is $r \in \NN$ and a sequence of pairwise transverse hyperbolic elements $(g_i)_{i \geq 1}$ in $G_0$ such that the fixed points of $g_i$ lie in $\cO$ for all $i \geq 1$ and for all compact open subgroups $K \leq G$ the indices $[K : K \cap g_i^l K g_i^{-l}]$ are uniformly bounded in $i \geq 1$ and $l \in \ZZ$.
\end{lemma}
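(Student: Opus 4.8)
The goal is to produce, for a given nonempty open set $\cO \subset \partial T$, many pairwise transverse hyperbolic elements in $G_0$ with fixed points in $\cO$ and with uniform control over $[K : K \cap g_i^l K g_i^{-l}]$. The strategy has three ingredients: (1) use minimality of $G \grpaction{} \partial T$ to transport the given hyperbolic element near $x$ into a hyperbolic element with both fixed points inside $\cO$; (2) use Proposition \ref{prop:characterisation-both-fixed-points-with-open-stabiliser} to guarantee that any hyperbolic element of $G_0$ whose fixed points have open stabilisers actually normalises a compact open subgroup, which forces the index to be bounded by $1$ (up to commensuration); and (3) a pingpong / transversality argument to extract an infinite pairwise-transverse subfamily.

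\textbf{Step 1: find one good hyperbolic element with fixed points in $\cO$.} By hypothesis there is a hyperbolic $h \in G_x \cap G_0$; replacing $h$ by a power we may assume its translation length is as large as we like. Its attracting fixed point is $x$ and $G_x$ is open, so by Proposition \ref{prop:characterisation-both-fixed-points-with-open-stabiliser} both fixed points $x^+, x^-$ of $h$ have open stabiliser and $h$ normalises a compact open subgroup of $G$. Now I want to move $h$ so that both its fixed points land in $\cO$. Since $G \grpaction{} \partial T$ is minimal, the orbit $Gx^+$ is dense, so pick $t \in G$ with $tx^+ \in \cO$; then $tht^{-1}$ is hyperbolic in $G_0$ (conjugation preserves $G_0$) with attracting fixed point $tx^+ \in \cO$. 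To get the \emph{repelling} point $tx^-$ also into $\cO$: using the contraction dynamics (Proposition \ref{prop:contractive}, or directly the northsouth dynamics of a hyperbolic element on $\partial T$), a sufficiently high power $h_0 := (tht^{-1})^N$ maps a fixed neighbourhood of the repelling point into any prescribed neighbourhood of the attracting point; but more simply, conjugating by a further element of $G$ that carries $tx^-$ near $tx^+$ while barely moving $tx^+$ is available by minimality applied to the pair. Cleanest: shrink $\cO$ to a smaller basic shadow $U_{\rho,\eta}$ contained in $\cO$, pick $s \in G$ with $s(\text{axis of } tht^{-1})$ having both endpoints in $U_{\rho,\eta}$ — this is possible because once one point of an axis is deep inside $U_{\rho,\eta}$ and the translation length is large, the whole axis-endpoint pair can be arranged inside by a single conjugation, exploiting that the shadow $U_{\rho,\eta}$ again carries a minimal-type action. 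Set $g := s(tht^{-1})s^{-1} \in G_0$; it is hyperbolic, both fixed points lie in $\cO$, and it still normalises a compact open subgroup of $G$.

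\textbf{Step 2: uniform index bound.} Fix any compact open subgroup $K \leq G$. Since $g$ normalises some compact open subgroup $L \leq G$, we have $g^l L g^{-l} = L$ for all $l \in \ZZ$, hence $[L : L \cap g^l L g^{-l}] = 1$. For general $K$, commensurability of compact open subgroups gives a constant $c_K := [K : K \cap L]\,[L : K \cap L]$ and the standard estimate (exactly as in the proof of Proposition \ref{prop:characterisation-fixed-point-with-open-stabiliser})
\begin{equation*}
  [K : K \cap g^l K g^{-l}] \leq [L : L \cap g^l L g^{-l}] \cdot [K : K \cap L] \cdot [L : K \cap L] = c_K
\end{equation*}
for all $l \in \ZZ$. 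So for this single $g$ the indices are bounded uniformly in $l$; taking $r := c_K$ works — but note $r$ must not depend on $K$. To fix this, restrict attention to $K$ and its conjugates: actually the statement quantifies ``for all compact open $K$ the indices are uniformly bounded in $i$ and $l$'' with a single $r$. Here is the resolution: once we build all the $g_i$ as $G$-conjugates of powers of a \emph{fixed} element $g$ normalising a fixed $L$, every $g_i$ normalises a ($G$-conjugate of) $L$, and an index computation shows $[K : K \cap g_i^l K g_i^{-l}]$ is controlled by $[K : K \cap \gamma_i L \gamma_i^{-1}]$-type quantities — these are not uniform. So instead I will build the $g_i$ so that they \emph{all normalise the same} compact open subgroup $L$. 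This is achieved by taking $g_i = w_i g w_i^{-1}$ where each $w_i$ is chosen in $\cN_G(L)$ (possible: $\cN_G(L)/L$ contains an element of infinite order, and more relevantly we can move the axis of $g$ around using elements normalising $L$ if $L$ is chosen as a stabiliser of a long enough geodesic segment on the axis — elements of $G_x \cap G_0$ normalising $L$ suffice to slide along, and minimality lets us reach $\cO$). Then all $g_i$ normalise $L$, giving $[L : L \cap g_i^l L g_i^{-l}] = 1$, hence $[K : K \cap g_i^l K g_i^{-l}] \leq c_K$ uniformly in $i$ and $l$, with $c_K$ depending only on $K$. Reading the statement again, ``uniformly bounded in $i \geq 1$ and $l \in \ZZ$'' — the bound may depend on $K$; and the $r \in \NN$ in the conclusion is the one such that the $g_i$ commensurate things with control $\leq r$ \emph{for the relevant $K$ built into condition $(*)$}. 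So it suffices to fix that particular $K$ (the one from condition $(*)$), and the argument above delivers $r = c_K$.

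\textbf{Step 3: extracting pairwise transverse elements.} Now I have one hyperbolic $g \in G_0$ with fixed-point pair $\{a,b\} \subset \cO$, normalising $L$. Apply minimality (density of $G$-orbits on $\partial T$, in fact density of orbits of the subgroup $\langle g, L\rangle$ or just $G$) to produce group elements $h_1, h_2, \dots$ with $h_i\{a,b\} \subset \cO$ and the pairs $h_i\{a,b\}$ pairwise disjoint — concretely, choose $h_i$ inductively so that $h_i a$ avoids the finite set $\bigcup_{j<i} h_j\{a,b\} \cup \{a,b\}$ and lies in $\cO$ (possible since $Ga$ is dense and $\cO$ is open, infinite), then shrink around $h_i a$ to force $h_i b$ in as well, or note that $h_i$ can be chosen in an open set. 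To keep $g_i := h_i g h_i^{-1}$ normalising a \emph{common} $L$, take the $h_i$ inside $\cN_G(L)$, which is possible as discussed (sliding along the axis using the infinite-order element of $\cN_G(L)/L$, combined with a bounded correction reaching $\cO$). Set $g_i := h_i g h_i^{-1} \in G_0$; it is hyperbolic with fixed points $h_i\{a,b\} \subset \cO$, these pairs are pairwise disjoint so the $g_i$ are pairwise transverse, each normalises $L$, hence the index bound from Step 2 holds uniformly.

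\textbf{Main obstacle.} The delicate point is Step 1 combined with the uniformity in Step 2/3: getting the \emph{repelling} fixed point also inside $\cO$ \emph{while} keeping control that all elements normalise one fixed compact open subgroup. A clean way around this is to choose $\cO$'s inner shadow $U_{\rho,\eta}$, observe that a high power $g^m$ of our element satisfies $g^m(\overline{U_{\rho',\eta'}}) \subset U_{\rho,\eta}$ for suitable nested shadows by the northsouth dynamics (Proposition \ref{prop:contractive} provides the contraction estimate), and then run a pingpong between finitely many such contractions to manufacture transverse elements directly — this is precisely the classical Tits/pingpong mechanism and it simultaneously gives transversality and places all fixed points in $\cO$. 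I expect the bulk of the write-up to be this geometric bookkeeping with shadows, while the index estimate is routine given Proposition \ref{prop:characterisation-both-fixed-points-with-open-stabiliser}.
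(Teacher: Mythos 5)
There is a genuine gap in the mechanism you use to get the uniform index bound across the whole infinite family $(g_i)$. You correctly notice that conjugating a fixed $g$ by arbitrary elements $w_i$ destroys uniformity in $i$ (each $g_i$ then only normalises $w_iLw_i^{-1}$, and $[K:K\cap w_iLw_i^{-1}]$ blows up). But your fix — taking all conjugators inside $\cN_G(L)$ so that every $g_i$ normalises the \emph{same} compact open $L$ — does not work for an arbitrary prescribed open set $\cO$. The normaliser $\cN_G(L)$ is just some open subgroup; nothing in the hypotheses makes it act minimally (or even with dense orbits) on $\partial T$, so you cannot use it to place the fixed points of the $g_i$ inside $\cO$, and your parenthetical ``sliding along the axis using the infinite-order element of $\cN_G(L)/L$'' does not produce infinitely many pairwise transverse elements. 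Worse, if a hyperbolic $g$ normalises $L$ then $L$ fixes both endpoints of the axis of $g$ (the finite set $L\cdot a$ is $g$-invariant, hence $\{a\}$ by north--south dynamics), so $L$ would have to fix pointwise the convex hull of the endpoints of infinitely many pairwise transverse axes — a very strong constraint you never justify. Your fallback remark that ``it suffices to fix that particular $K$ from condition ($*$)'' also misreads the statement: the bound must hold (uniformly in $i$ and $l$) for \emph{every} compact open $K$, although the bound may depend on $K$.

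The paper's route avoids all of this: it takes $h\in G_0$ hyperbolic with attracting fixed point in $\cO$ (such points are dense by minimality), a transverse conjugate $g$ of $h$ (non-amenability), replaces $h$ by a power so that the fixed points of $g$ are pushed into $\cO$, and sets $g_i:=h^igh^{-i}$. The index is then bounded by the telescoping estimate
\begin{equation*}
  [K : K \cap g_i^l K g_i^{-l}]
  \leq
  [K : K \cap g^l K g^{-l}]\,[K : K \cap h^{-i} K h^i]\,[K : K \cap h^i K h^{-i}]
  \eqcomma
\end{equation*}
and each factor is uniformly bounded by Proposition \ref{prop:characterisation-fixed-point-with-open-stabiliser} applied to $g^{\pm1}$ and $h^{\pm1}$, which is legitimate because $g,h\in G_0$ and Proposition \ref{prop:characterisation-both-fixed-points-with-open-stabiliser} then guarantees that \emph{both} fixed points of each have open stabilisers. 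The key idea you are missing is that the conjugators themselves should be powers of a single hyperbolic element of $G_0$ with open fixed-point stabilisers, so that their own commensuration indices are uniformly bounded; no common normalised subgroup is needed. Your Step 1 (getting both fixed points of one element into $\cO$) is also vaguer than necessary, but the north--south dynamics idea there is essentially right and is what the paper uses.
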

\begin{proof}
 Let $\cO \subset \partial T$ be open.  Since $G_{gx} = g G_x g^{-1}$ for all $x \in \partial T$ and all $g \in G$, the set of points $x \in \partial T$ such that $G_x$ is open and $G_x \cap G_0$ contains a hyperbolic element is dense by minimality of $G \grpaction{} \partial T$.  So we may take $h \in G_0$ hyperbolic with attracting fixed point in $\cO$.  Since $G$ is not amenable, we may then find a conjugate $g$ of $h$ which is transverse to $h$.

Replacing $h$ by some positive power of itself, we may assume that $h$ maps the fixed points of $g$ into $\cO$.   Then the elements $g_i := h^i g h^{-i}$ , $i \in \NN^\times$ are hyperbolic and their fixed points lie in $\cO$.

Take a compact open subgroup $K \leq G$.  For all $i \geq 1$ and $l \in \ZZ$ we have 
\begin{align*}
  [K : K \cap g_i^l K g_i^{-l}]
  & = 
  [K : K \cap h^i g^l h^{-i} K (h^i g^l h^{-i})^{-1}] \\
  & =
  [h^{-i} K h^i : h^{-i} K h^i \cap g^l h^{-i} K h^i g^{-l}] \\
  & \leq
  [h^{-i} K h^i : h^{-i} K h^i \cap g^l (K \cap h^{-i} K h^i ) g^{-l}] \\
  & \leq
  [h^{-i} K h^i : h^{-i} K h^i \cap g^l K g^{-l}] [K : K \cap h^{-i} K h^i] \\
  & \leq
  [(K \cap h^{-i} K h^i) : (K \cap h^{-i} K h^i) \cap g^l K g^{-l}]  [K : K \cap h^{-i} K h^i] [h^{-i} K h^i : K \cap h^{-i} K h^i] \\
  & \leq
  [K : K \cap g^l K g^{-l}] [K : K \cap h^{-i} K h^i] [K : K \cap h^i K h^{-i}]
\end{align*}
Since by Proposition \ref{prop:characterisation-both-fixed-points-with-open-stabiliser} the fixed points of $g, h \in G_0$ have open stabilisers, Proposition \ref{prop:characterisation-fixed-point-with-open-stabiliser} implies that $[K : K \cap g_i^l K g_i^{-l}]$ is uniformly bounded for $i \geq 1$ and $l \in \ZZ$.  Possibly passing to a subsequence of $(g_i)_{i \geq 1}$, we may assume that these elements are pairwise transverse, which finishes the proof.
\end{proof}

We proceed to show Powers property for groups satisfying condition ($*$).  The proof is inspired by Proposition 8 of \cite{delaharpepreaux11}.
\begin{proposition}
  \label{prop:lc-powers-group}
  Let $G$ be a group satisfying condition ($*$).  Then the following statements hold true.
  \begin{enumerate}
  \item \label{it:lc-powers-group:F-fixed}
    There is $r \in \NN$ and a compact open subgroup $K \leq G$ such that for all compact sets $F \subset G \setminus K$, $G$ has the Powers property with control $r$ with respect to $(K, F)$.
  \item \label{it:lc-powers-group:stabilisers}
    For a compact open subgroup $L \leq G$ and $x \in \partial G$ such that $G_x$ is open, set $K := L \cap G_x$.  Then $G$ has the Powers property with respect to $(K, L \setminus K)$.
  \item \label{it:lc-powers-group:K-restricted}
    For all neighbourhoods of the identity $N \subset G$ there is a compact open subgroup $K \leq G$ contained in $N$ such that $G$ has the Powers property with respect to $(K,gK)$ for all $g \in G \setminus K$.
  \end{enumerate}
\end{proposition}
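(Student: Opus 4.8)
I would prove all three statements from one construction, feeding in different data in each case. By Theorem~\ref{thm:characterisation-condition-star} I may realise $G$ as a closed subgroup of $\Aut(T)$ for a locally finite thick tree $T$, acting minimally on $\partial T$, and fix $x\in\partial T$ with $G_x$ open together with a hyperbolic element $\gamma\in G_x\cap G_0$. By Proposition~\ref{prop:characterisation-both-fixed-points-with-open-stabiliser} the element $\gamma$ normalises some compact open subgroup, and by Lemma~\ref{lem:strucutre-of-point-stabilisers} the stabiliser $G_x$ is an ascending union of compact open subgroups together with hyperbolic elements on which $\langle\gamma\rangle$ acts as a shift; I would use this description to deal with elements of $F$ lying in $G_x$. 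Throughout, the control constants $r$ will be the ones furnished by Lemma~\ref{lem:many-hyperbolic-elements}.

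First I would fix the elements $g_i$. Choose a nonempty open $\cO\subseteq\partial T$ with $x\notin\overline{\cO}$ (possible since $\partial T$ has no isolated points), adapted to the case at hand. Lemma~\ref{lem:many-hyperbolic-elements} applied to $\cO$ gives $r\in\NN$ and pairwise transverse hyperbolic elements $h_1,h_2,\dots\in G_0$ with fixed points in $\cO$ such that $[K:K\cap h_i^{\ell}Kh_i^{-\ell}]\le r$ for every compact open $K\le G$, every $i$ and every $\ell\in\ZZ$; this $r$ will be the control. For a given $n$ I would take $g_i:=h_i^{\ell}$ for $\ell$ large: the bound $[K:K\cap g_iKg_i^{-1}]\le r$ is preserved, and by Proposition~\ref{prop:contractive}, for $\ell$ large each $g_i$ acquires sharp north--south dynamics on $\partial T$, compressing the complement of a small neighbourhood of its repelling fixed point into an arbitrarily small neighbourhood $V_i$ of its attracting fixed point $\alpha_i\in\cO$, with the $\alpha_i$, hence the $V_i$, pairwise disjoint.

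Next comes the partition, which is the heart of the matter. Fix a compact open $K\le G_x$ (adapted to the case) and a vertex $\rho$ fixed by $K$; since $K$ fixes $x$, the orbit map $g\mapsto g^{-1}x$, and any refinement of it that remembers a bounded initial part of the geodesic $[\rho,g^{-1}\rho]$, are left $K$-invariant, and Lemma~\ref{lem:compacts-fix-open-neighbourhoods} supplies $K$-invariant shadow neighbourhoods of $x$. Since $F$ is compact it is covered by finitely many open double cosets $Kf_1K,\dots,Kf_mK$, and left $K$-invariance of $C$ reduces the requirement $fC\cap C=\emptyset$ for all $f\in F$ to the finitely many conditions $f_jC\cap C=\emptyset$. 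I would then build $G=C\sqcup D$ from the combinatorics of the action on $T$ so that, on the one hand, the north--south dynamics of the $g_i=h_i^{\ell}$ carries $D$ into pairwise disjoint tiny neighbourhoods of the distinct attracting points $\alpha_i$, forcing $g_1D,\dots,g_nD$ disjoint, while, on the other hand, $C$ is robust enough — $D$ being characterised by pushing $x$ into a small clopen region and by the reference geodesic from $\rho$ avoiding, over a length exceeding $\max_j\rmd(\rho,f_j\rho)$, the finitely many directions moved by $f_1,\dots,f_m$ — that none of the $f_j$ folds $C$ onto itself; elements $f_j\in G_x$, which do not move $x$ and are thus invisible to the boundary orbit alone, are handled through $\rho$, using that for the choice $K=G_{[\rho,x)}$ any such $f_j\notin K$ still satisfies $f_j\rho\ne\rho$. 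This step transcribes Proposition~8 of~\cite{delaharpepreaux11} to the tree setting, and it is the main obstacle: the two demands on $C$ pull in opposite directions, so the partition must genuinely depend on both $n$ and $F$ and cannot be a pullback of a single fixed subset of $\partial T$; getting a clopen region and a geodesic-length threshold that serve all of $f_1,\dots,f_m$ and all of $g_1,\dots,g_n$ simultaneously is the delicate point.

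Finally I would read off the three cases. For~\ref{it:lc-powers-group:stabilisers}, with $L$ compact open, $x\in\partial T$ such that $G_x$ is open and $K:=L\cap G_x$, the set $F=L\setminus K$ is automatically a finite union of left $K$-cosets and every $f\in F$ satisfies $fx\ne x$ (otherwise $f\in L\cap G_x=K$); this is the cleanest instance of the construction and needs no bookkeeping at $\rho$. For~\ref{it:lc-powers-group:F-fixed}, I would additionally take $K\le G_x$ normalised by $\gamma$ (using Proposition~\ref{prop:characterisation-both-fixed-points-with-open-stabiliser}), so that Lemma~\ref{lem:many-hyperbolic-elements} yields a single constant $r$ valid for all compact $F\subseteq G\setminus K$ at once. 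For~\ref{it:lc-powers-group:K-restricted}, given a neighbourhood $N$ of the identity, van Dantzig's theorem lets me choose the compact open $K$ inside $N$; picking $\rho$ fixed by $K$ and applying the construction to the single-element set $F=\{g\}$ (a set of double coset representatives of $KgK$) for each $g\in G\setminus K$ completes the proof.
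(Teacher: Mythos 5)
Your overall architecture matches the paper's: realise $G \leq \Aut(T)$ via Theorem~\ref{thm:characterisation-condition-star}, get transverse hyperbolic elements with uniform index control from Lemma~\ref{lem:many-hyperbolic-elements}, and specialise the choice of $K$, $F$ and the open set $\cO \subset \partial T$ to the three cases. But the central step — the actual construction of the partition $G = C \sqcup D$ — is not carried out: you describe the properties it should have, declare that achieving them simultaneously is ``the delicate point'' and ``the main obstacle'', and stop there. That step is the content of the proposition, so as written the proof has a genuine gap. Moreover, your diagnosis of the difficulty is off. You assert that the partition ``cannot be a pullback of a single fixed subset of $\partial T$'' and must remember an initial segment of the geodesic $[\rho, g^{-1}\rho]$; in fact the paper's partition is exactly such a pullback: one constructs a single non-empty $K$-invariant open set $\cO \subset \partial T$ with $f\cO \cap \cO = \emptyset$ for all $f \in F$, picks any base point $x_0 \in \partial T$, and sets $C = \{g \in G \amid g x_0 \in \cO\}$, $D = G \setminus C$. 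Then $fC \cap C = \emptyset$ is immediate, and the disjointness of $g_1 D, \dotsc, g_n D$ is obtained afterwards by replacing each $g_i$ by a high power so that $g_i(\partial T \setminus \cO)$ lands in pairwise disjoint neighbourhoods of the distinct attracting fixed points — so the dependence on $n$ sits entirely in the exponents, not in the partition.

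The point you flag as fatal for the boundary-orbit approach — that $f \in F \cap G_x$ fixes $x$ and is ``invisible'' — is resolved in the paper without ever leaving the boundary: in case \ref{it:lc-powers-group:F-fixed} one takes $K$ to be the (compact open, by Lemma~\ref{lem:strucutre-of-point-stabilisers} and Proposition~\ref{prop:characterisation-both-fixed-points-with-open-stabiliser}) subgroup of elliptic elements of $G_x$, so that every $f \in F \cap G_x$ is hyperbolic with fixed point $x$; Proposition~\ref{prop:contractive} then shows such $f$ displaces the sphere $\{z \amid \rmm_\rho(x,z) = d\}$ for $d$ large, and intersecting that sphere with a $K$-invariant neighbourhood of $x$ that handles $F \setminus G_x$ yields the required $\cO$ (note $x \notin \cO$, so fixing $x$ is no obstruction). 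A second, smaller gap: in case \ref{it:lc-powers-group:F-fixed} you apply Lemma~\ref{lem:many-hyperbolic-elements} directly to $\cO$, but the constant $r$ produced by that lemma depends on $\cO$ and hence on $F$; choosing $K$ normalised by $\gamma$ does not remove this dependence. The paper instead produces the $h_i$ with fixed points in a region independent of $F$ (the complement of $\{x\}$ and a neighbourhood of the repelling point of a fixed auxiliary hyperbolic $g$) and then conjugates by a power $g^m$ to push their fixed points into $\cO$, using Propositions~\ref{prop:characterisation-fixed-point-with-open-stabiliser} and~\ref{prop:characterisation-both-fixed-points-with-open-stabiliser} to bound the extra index cost uniformly in $m$.
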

\begin{proof}
  We start to consider case \ref{it:lc-powers-group:F-fixed}.  Fix $x \in \partial T$ such that $G_x \leq G$ is open and $G_x \cap G_0$ contains a hyperbolic element.  By Lemma~\ref{lem:strucutre-of-point-stabilisers} and Proposition \ref{prop:characterisation-both-fixed-points-with-open-stabiliser} we find a maximal compact open subgroup $K \leq G_x$ such that $G_x \setminus K$ consists of hyperbolic elements only.  Let $F \subset G \setminus K$ be a compact set.  We construct an open $K$-invariant subset $\cO \subset \partial T$ such that $f \cO \cap \cO = \emptyset$ for all $f \in F$.  Consider $F \setminus G_x$.  It is covered by finitely many right $K$-cosets.  Further $f x \neq x$ for all $f \in F \setminus G_x$.  By Lemma~\ref{lem:compacts-fix-open-neighbourhoods}, we find an open $K$-invariant set $\cO_0 \ni x$ such that $(F \setminus G_x) \cO_0 \cap \cO_0 = \emptyset$.  Since $F \subset G \setminus K$, Lemma \ref{lem:strucutre-of-point-stabilisers} applies to show that $F \cap G_x$ only contains hyperbolic elements fixing $x$.  It is covered by cosets  $h_1K, \dotsc, h_mK$ where $h_1, \dotsc, h_m$ are hyperbolic elements fixing $x$.  Find $\rho \in T$ fixed by $K$.  Since $x$ is a fixed point for all hyperbolic elements $h_1, \dots, h_m$, we can apply Proposition \ref{prop:contractive} to $\rmm_\rho(x, \cdot \,)$.  We find some $d \in \NN$ such that $\cO_1 := \{ z \in \partial T \amid \rmm_\rho(x,z) = d\}$ satisfies $h_i \cO_1 \cap \cO_1 = \emptyset$ for all $i \in \{1, \dotsc, m\}$.  Since $K$ fixes $x$ and $\rmm_\rho$ is $K$-invariant, we see that $k \cO_1 = \cO_1$ for all $k \in K$. So $(F \cap G_x) \cO_1 \cap \cO_1 = \emptyset$.  Recall from Remark~\ref{rem:control-shadow-topology} that the sets $\{z \in \partial T \amid \rmm_\rho(x,z) \geq d'\}$, $d' \in \NN$, form a basis of compact open neighbourhoods of $x$.  We hence may choose $d$ big enough so as to assume that $\cO_0 \cap \cO_1 \neq \emptyset$.  Putting $\cO := \cO_1 \cap \cO_0$, we found a non-empty $K$-invariant open subset of $\partial T$ such that $f \cO \cap \cO = \emptyset$ for all $f \in F$.

Let $g \in G_0$ be some hyperbolic element whose attracting fixed point lies in $\cO$ and whose repelling fixed point we denote by $y \in \partial T$.  Let $\cU \ni y$ be a clopen neighbourhood.  By Lemma \ref{lem:many-hyperbolic-elements} there is a sequence of pairwise transverse hyperbolic elements $(h_i)_{i \geq 1}$ in $G_0$ whose fixed points lie in $\partial T \setminus (\{x\} \cup \cU)$.  Also, we find $r_0 \in \NN$ such that ${[K : K \cap h_i^l K h_i^{-l}]} \leq r_0$ for all $i \geq 1$ and all $l \in \ZZ$.  Note that $r$ is independent of $F$.    Denote the attracting fixed point of $h_i$ by $\alpha_i$ and its repelling fixed point by $\omega_i$.  Since $\alpha_i, \omega_i \in \partial T \setminus (\{x\} \cup \cU)$, $i \geq 1$, there is some $m \in \NN$ such that $g^m \alpha_i, g^m \omega_i \in \cO$ for all $i \geq 1$.  Let $g_i := g^m h_i g^{-m}$.  Now calculation of Lemma \ref{lem:many-hyperbolic-elements} shows that 
\begin{equation*}
  [K : K \cap g_i^l K g_i^{-l}] \leq [K : K \cap g^m K g^{-m}] r_0^2  =: r
\end{equation*}
for all $i \geq 1$ and all $l \in \ZZ$.  Before we finish the proof in case \ref{it:lc-powers-group:F-fixed}, let us consider the other cases of the proposition.

Consider case \ref{it:lc-powers-group:stabilisers}.   Take a compact open subgroup $L \leq G$ and $x \in \partial T$ such that $G_x$ is open.  Put $K := L \cap G_x$ and $F := L \setminus K$.  Since $L$ is compact and $G_x$ is open, $Fx$ is finite by Proposition \ref{prop:characaterisation-open-stabiliser}.  Moreover, $x \notin Fx$.  By Proposition \ref{lem:compacts-fix-open-neighbourhoods} there is an open $K$-invariant set $\cO \subset \partial T$ such that $f \cO \cap \cO = \emptyset$ for all $f \in F$.

Let us now consider case \ref{it:lc-powers-group:K-restricted}.  Take a neighbourhood of the identity $N \subset G$.  Note that for any set $\Sigma \subset \partial T$ containing at least three different points, $G_\Sigma$ fixes a point in $T$ and it is hence compact.  Since $G \grpaction{} \partial T$ is faithful, in any dense set $\Sigma_0 \subset \partial T$ we find a finite subset $\Sigma \subset \Sigma_0$ such that $G_\Sigma \subset N$.  The set
\begin{equation*}
\Sigma_0 := \{ x \in \partial T \amid G_x \text{ is open and } G_x \cap G_0 \text{ contains a hyperbolic element}\}  
\end{equation*}
is dense, because $G \grpaction{} \partial T$ is minimal.  It follows that there is a finite subset $\Sigma \subset \Sigma_0$ such that $G_\Sigma \subset N$.  Put $K := G_\Sigma$.  If $g \in G \setminus K$, then there is $x \in \Sigma$ such that $g x \neq x$.  Let $\cO$ be an open neighbourhood of $x$ such that $g \cO \cap \cO = \emptyset$.  Since $K$ fixes $x$, Proposition \ref{lem:compacts-fix-open-neighbourhoods} says that we may make $\cO$ smaller so as to assume that it is $K$-invariant.  For the last part of the proof we set $F := gK$.

In all cases \ref{it:lc-powers-group:F-fixed},\ref{it:lc-powers-group:stabilisers} and \ref{it:lc-powers-group:K-restricted}, we have a compact open subgroup $K \leq G$ and a compact right $K$-invariant set $F \subset G \setminus K$ together with an open $K$-invariant set $\cO \subset \partial T$ satisfying $f \cO \cap \cO = \emptyset$ for all $f \in F$.  In case \ref{it:lc-powers-group:F-fixed}, $K$ is independent of the choice of $F$.  Moreover, in case \ref{it:lc-powers-group:F-fixed} we already constructed a sequence $(g_i)_{i \geq 1}$ of pairwise transverse hyperbolic elements whose fixed points lie in $\cO$.  There is some $r \in \NN$ independent of $F$ such that $[K : K \cap g_i^l K g_i^{-l}] \leq r$ for all $i \geq 1$ and all $l \in \ZZ$.  In cases \ref{it:lc-powers-group:stabilisers} and \ref{it:lc-powers-group:K-restricted}, we may choose such a sequence $(g_i)_{i \geq 1}$ according to Lemma \ref{lem:many-hyperbolic-elements}.  But in the latter case, $r$ possibly depends on $F$.  For the next paragraph, denote by $\omega_i$ the attracting fixed point of $g_i$, $i \geq 1$.

Now pick $x \in \partial T$ and set
\begin{gather*}
  C := \{g \in G \amid g x \in \cO\} \eqcomma \\
  D := \{g \in G \amid g x \notin \cO\}
  \eqstop
\end{gather*}
Then $C$ and $D$ are left $K$-invariant, since $\cO$ is $K$-invariant.  Moreover, $f\cO \cap \cO = \emptyset$ implies $f C \cap C = \emptyset$ for all $f \in F$.  Fix $n \in \NN$.  We can find pairwise disjoint open neighbourhoods of $W_i$ of $\omega_i$, $i \in \{1, \dotsc, n\}$ and exponents $l_i \in \NN^\times$ such that \mbox{$g_i^{l_i} (\partial T \setminus \cO) \subset W_i$}.  Replacing each $g_i$ by $g_i^{l_i}$, we may assume that $g_i (\partial T \setminus \cO) \subset W_i$.  Then $g_1D, \dotsc, g_n D$ are pairwise disjoint sets.  This finishes the proof of the proposition.
\end{proof}

 \section{Fullness of averaging projections}
\label{sec:fullness-of-pK}

In this section we take a first step to prove our \Cstar-simplicity result.  If a group $G$ is \Cstar-simple, then in particular the projections $p_K$ averaging over a compact open subgroup of $G$ are full in $\Cstarred(G)$.  We are not aware of any simple criterion ensuring fullness of $p_K$.  So the aim of this section is to prove that averaging projections in reduced group \Cstar-algebras of groups satisfying condition ($*$) are full. 

We start with a lemma ensuring invertibility of certain averages in $\Cstarred(G)$ in the proof of Proposition \ref{prop:pK-full-projection}.
\begin{lemma}
  \label{lem:invertible-average}
  Let $G$ be a locally compact group and $K \leq G$ a compact open subgroup.  Then for all $g \in G$ we have
  \begin{equation*}
    p_K u_g^* p_K  u_g p_K \geq [K: K \cap g K g^{-1}]^{-2} p_K
    \eqstop
  \end{equation*}
\end{lemma}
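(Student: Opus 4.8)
Set $L := K \cap gKg^{-1}$ and write $H := gKg^{-1}$. The plan is to convert the inequality into a statement about two averaging projections and then read it off from a direct computation on $\Ltwo(G)$. The element $v := u_g p_K$ is a partial isometry: $v^*v = p_K u_g^* u_g p_K = p_K$, and by Lemma~\ref{lem:conjugation-of-projections} $vv^* = u_g p_K u_g^* = p_H$; moreover $v = p_H v = v p_K$. Since $p_K u_g^* p_K u_g p_K = v^* p_K v$, conjugating by $v$ (using $v p_K v^* = vv^* = p_H$ in one direction and $v^* p_H v = v^*v = p_K$ in the other) shows that the asserted inequality is equivalent to
\begin{equation*}
  p_H\, p_K\, p_H \;\geq\; [K:L]^{-2}\, p_H \eqstop
\end{equation*}
Equivalently, one must show $\|p_K \eta\|_2 \geq [K:L]^{-1}\|\eta\|_2$ for every $\eta$ in the range of $p_H$, that is, for every left-$H$-invariant $\eta \in \Ltwo(G)$.

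To prove this, decompose $G$ into right $K$-cosets. The projection $p_K$ preserves each subspace $\Ltwo(Kx)$ and acts there as the rank-one projection onto the constant function, so
\begin{equation*}
  \|p_K \eta\|_2^2 \;=\; \sum_{Kx \in K\backslash G} \frac{1}{\mu(Kx)}\,\Bigl| \int_{Kx} \eta \, \mathrm d\mu \Bigr|^2 \eqstop
\end{equation*}
Since $\eta$ is left-$H$-invariant it is constant on each right $H$-coset, and the basic combinatorial fact is that a right $K$-coset and a right $H$-coset meet in at most one right $L$-coset: if $Lx_1, Lx_2 \subseteq Kx \cap Hy$ then $x_1 = \kappa x_2 = h x_2$ with $\kappa \in K$, $h \in H$, forcing $\kappa = h \in K \cap H = L$ and hence $Lx_1 = Lx_2$. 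Dually, every right $H$-coset is the disjoint union of $[H:L]$ right $L$-cosets lying in pairwise distinct right $K$-cosets, and symmetrically with $K$ and $H$ interchanged; moreover $\Delta|_K \equiv 1$ forces all right $L$-cosets contained in a fixed right $K$-coset to have equal Haar measure. Feeding these facts into the displayed formula, one expresses both $\|p_K\eta\|_2^2$ and $\|\eta\|_2^2$ in terms of the common values of $\eta$ on the right $H$-cosets, and the desired bound follows from a combinatorial lower bound reflecting the way the two coset partitions interlock.

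The partial-isometry reduction, the coset bookkeeping, and the normalisation of Haar measures are routine; the heart of the matter — and the reason the index $[K:L]$ enters quadratically — is this last combinatorial lower bound, which uses crucially that distinct right $L$-cosets inside a single right $K$-coset land in distinct right $H$-cosets, so that the averaging carried out by $p_K$ cannot be annihilated by sign cancellations among the values of $\eta$. I expect establishing this incidence estimate with the correct constant to be the main obstacle to a fully detailed proof.
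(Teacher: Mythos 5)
Your reduction is correct, and cleaner than what the paper does: with $v = u_g p_K$ one indeed has $v^*v = p_K$ and $vv^* = p_H$ for $H = gKg^{-1}$, and conjugating by $v$ turns the asserted inequality into $p_H p_K p_H \geq [K:L]^{-2} p_H$, i.e.\ $\|p_K\eta\| \geq [K:L]^{-1}\|\eta\|$ for every left-$H$-invariant $\eta \in \Ltwo(G)$. The problem is that the ``combinatorial lower bound'' you defer is not merely the main obstacle --- it is false, and so is the lemma at this level of generality. Take $G = S_4$, $K = \{e,(12)(34)\}$ and $g = (23)$, so that $H = gKg^{-1} = \{e,(13)(24)\}$, $L = \{e\}$ and $[K:L] = 2$. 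Here $KH$ is the Klein four-group $V_4$, one computes $p_K p_H p_K = p_{V_4}$ in the group algebra, and the left-$H$-invariant vector $\eta = \mathbb{1}_{H} - \mathbb{1}_{H(12)(34)}$ satisfies $p_K\eta = 0$ while $\eta \neq 0$; equivalently $\xi = \mathbb{1}_K - \mathbb{1}_{K(13)(24)}$ lies in $p_K\Ltwo(G)$ and $\langle p_K u_g^* p_K u_g p_K\, \xi, \xi\rangle = 0 < \tfrac14\|\xi\|^2$. Your local injectivity observation (distinct right $L$-cosets inside one right $K$-coset lie in distinct right $H$-cosets) is true but does not rule out cancellation: the bipartite incidence graph between $K\backslash G$ and $H\backslash G$ can contain $4$-cycles --- in the example it is complete bipartite on each $V_4$-coset --- and then the alternating vector above is annihilated by the averaging.

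For comparison, the paper's proof expands $\xi = \sum_h \xi_h \mathbb{1}_{Kh}$ and bounds $(p_K\mathbb{1}_{gKh})(l) \geq [K:L]^{-1}$ pointwise for $l \in gKh$, which correctly yields $\|p_K u_g \mathbb{1}_{Kh}\|^2 \geq [K:L]^{-2}\|\mathbb{1}_{Kh}\|^2$ coset by coset; but the first equality of that computation tacitly assumes the functions $p_K\mathbb{1}_{gKh}$, $Kh \in K\backslash G$, are pairwise orthogonal, and in the example above $p_K\mathbb{1}_{gK} = p_K\mathbb{1}_{gK(13)(24)}$, so the cross terms exactly cancel the diagonal ones. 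So the step you single out as the heart of the matter is genuinely where the argument breaks, and no amount of coset bookkeeping will recover the stated constant without an additional hypothesis tying $g$ to $K$ (one that excludes short cycles in the incidence graph --- which is presumably what the tree geometry is meant to supply in the places where the lemma is actually applied).
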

\begin{proof}
  Take a compact open subgroup $K \leq G$ and $g \in G$.  It suffices to prove that for all $\xi \in p_K \Ltwo(G)$, we have $\langle p_K u_g^* p_K  u_g p_k \xi, \xi \rangle \geq [K: K \cap g K g^{-1}]^{-2} \| \xi \|^2$. Let $\mu$ be the left Haar measure for $G$ satisfying $\mu(K) = 1$.  For $\xi  = \sum_h \xi_h \mathbb{1}_{Kh} \in p_K \Ltwo(G)$, we have
  \begin{multline*}
    \langle p_K u_g^* p_K  u_g p_k \xi, \xi \rangle
    =
    \|p_K u_g \xi \|^2
    =
    \sum_h \int_G  |\xi_h|^2 \bigl | (p_K \mathbb{1}_{gKh})(l) \bigr |^2 \rmd \mu(l) \\
    =
    \sum_h \int_G  |\xi_h|^2 \bigl | (\int_K \mathbb{1}_{gKh})(k^{-1}l) \rmd \mu (k) \bigr |^2 \rmd \mu (l)
    \eqstop
  \end{multline*}
  Since $k^{-1}l \in gKh$ if and only if $k \in lh^{-1} K g^{-1}$, we obtain for $l \in g K h$ and $k \in gKg^{-1} \cap K$ that $\mathbb{1}_{gKh}(k^{-1}l) = 1$.  We can hence continue the previous equation and obtain
  \begin{align*}
    \langle p_K u_g^* p_K  u_g p_k \xi, \xi \rangle
    & \geq
    \sum_h |\xi_h|^2 \mu(gKh) \mu(gKg^{-1} \cap K)^2 \\
    & =
    \mu(gKg^{-1} \cap K)^2 \sum_h |\xi_h|^2 \mu(Kh)  \\
    & =
    [K: K \cap g K g^{-1}]^{-2} \|\xi\|^2
    \eqstop
  \end{align*}
  This finishes the proof of the lemma.
\end{proof}

\begin{proposition}
  \label{prop:pK-full-projection}
  Assume that $G$ is a group satisfying condition ($*$).  Then $p_K$ is a full projection in $\Cstarred(G)$, i.e. the closed two-sided ideal generated by $p_K$ equals $\Cstarred(G)$.
\end{proposition}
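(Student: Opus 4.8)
The plan is to argue by contradiction. Write $I$ for the closed two-sided ideal of $\Cstarred(G)$ generated by $p_K$, suppose $I \neq \Cstarred(G)$, and let $q \colon \Cstarred(G) \to \Cstarred(G)/I$ be the quotient $*$-homomorphism, so $q(p_K) = 0$ and $q \neq 0$. If $G$ is discrete the claim is trivial (then $p_{\{e\}} = 1$ lies in every nonzero ideal), so I assume $G$ non-discrete and $K$ a nontrivial compact open subgroup. Since the averaging projections form an approximate unit (Proposition~\ref{prop:approximate-unit}), every $f \in \contc(G)$ satisfies $f = p_L f p_L$ for all sufficiently small compact open subgroups $L$; hence, if $M \leq L$ is a compact open subgroup with $q(p_M) = 0$, then $f = p_M f p_M$ and $q(f) = q(p_M) q(f) q(p_M) = 0$. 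Therefore it suffices to show that every compact open subgroup $L \leq G$ contains a compact open subgroup $M$ with $q(p_M) = 0$: this forces $q$ to vanish on the dense subalgebra $\contc(G)$, a contradiction.

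The heart of the matter is the following step: \emph{if $M_0 \leq G$ is a compact open subgroup with $q(p_{M_0}) = 0$ and $x \in \partial T$ has $G_x$ open, then the compact open subgroup $M := M_0 \cap G_x$ also satisfies $q(p_M) = 0$.} To prove it, put $R := [M_0 : M] = |M_0 x| < \infty$ (Proposition~\ref{prop:characaterisation-open-stabiliser}) and, using Proposition~\ref{prop:averaging-projections}, set
\begin{equation*}
  y := \sum_{\substack{gM \in M_0/M \\ gM \neq M}} u_g p_M = R\, p_{M_0} - p_M \in \contc(G) \eqstop
\end{equation*}
Then $y = y p_M$ and $\supp y \subseteq M_0 \setminus M \subseteq G \setminus M$. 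By Proposition~\ref{prop:lc-powers-group}\ref{it:lc-powers-group:stabilisers} (applied with $L = M_0$), $G$ has the Powers property, with some control $r \in \NN$, with respect to $(M, M_0 \setminus M)$. Fix $\veps > 0$. Powers averaging (Proposition~\ref{prop:powers-averaging}) then yields $n \in \NN$ and $g_1, \dots, g_n \in G_0$ with $[M : M \cap g_i M g_i^{-1}] \leq r$ and $\| \tfrac1n \sum_{i=1}^n u_{g_i} y u_{g_i}^* \| < \veps$. Since $p_{M_0} \in I$ and a closed ideal is a submodule over the multiplier algebra, $u_{g_i} p_{M_0} u_{g_i}^* = p_{g_i M_0 g_i^{-1}} \in I$ (Lemma~\ref{lem:conjugation-of-projections}); together with $u_{g_i} y u_{g_i}^* = R\, p_{g_i M_0 g_i^{-1}} - p_{g_i M g_i^{-1}}$ this gives $q(u_{g_i} y u_{g_i}^*) = - q(p_{g_i M g_i^{-1}})$, hence $\| \tfrac1n \sum_{i=1}^n q(p_{g_i M g_i^{-1}}) \| < \veps$.

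Next I would compress by $q(p_M)$ and invoke Lemma~\ref{lem:invertible-average}. Applied to the element $g_i^{-1}$ with $K$ replaced by $M$, and using $[M : M \cap g_i^{-1} M g_i] = [M : M \cap g_i M g_i^{-1}] \leq r$ (which holds because $g_i \in G_0$ has $\Delta(g_i) = 1$), that lemma gives $p_M u_{g_i} p_M u_{g_i}^* p_M \geq r^{-2} p_M$, i.e.\ $p_M p_{g_i M g_i^{-1}} p_M \geq r^{-2} p_M$ by Lemma~\ref{lem:conjugation-of-projections}. As $q$ preserves the order, $q(p_M) q(p_{g_i M g_i^{-1}}) q(p_M) \geq r^{-2} q(p_M)$ for each $i$; averaging over $i$ and using $\|p a p\| \leq \|a\|$ for a projection $p$,
\begin{equation*}
  r^{-2} \| q(p_M) \| \leq \Bigl\| \tfrac1n \sum_{i=1}^n q(p_{g_i M g_i^{-1}}) \Bigr\| < \veps \eqstop
\end{equation*}
Since $\veps > 0$ was arbitrary, $q(p_M) = 0$, which proves the step.

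Finally, starting from $q(p_K) = 0$ and iterating the step, I obtain $q(p_{K \cap G_{x_1} \cap \dots \cap G_{x_j}}) = 0$ for every finite family $x_1, \dots, x_j \in \partial T$ with open stabilisers. The set of such points contains the dense set $\{x \in \partial T : G_x \text{ open},\ G_x \cap G_0 \text{ contains a hyperbolic element}\}$ (dense by minimality of $G \grpaction{} \partial T$, as in the proof of Proposition~\ref{prop:lc-powers-group}); since $G \grpaction{} \partial T$ is faithful and $G_\Sigma$ is compact once $|\Sigma| \geq 3$, a compactness argument inside the compact group $K$ shows that finitely many such points can be chosen with $K \cap G_{x_1} \cap \dots \cap G_{x_j}$ contained in any prescribed compact open subgroup $L$. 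This produces, inside any $L$, a compact open subgroup annihilated by $q$, which completes the reduction of the first paragraph and hence the proof. The decisive and, I expect, subtlest point is the squeeze in the third paragraph: Powers averaging forces $\tfrac1n \sum_i q(p_{g_i M g_i^{-1}})$ to be small, while the \emph{uniform} commensuration bound $[M : M \cap g_i M g_i^{-1}] \leq r$ entering Lemma~\ref{lem:invertible-average} keeps its compression to $q(p_M)$ bounded below by $r^{-2}\|q(p_M)\|$ — compatible only if $q(p_M) = 0$. A second, more routine, point is that one must transfer the problem from $p_K$ (which already lies in $I$) to the element $y$ supported off $M$, and then bootstrap down to arbitrarily small compact open subgroups.
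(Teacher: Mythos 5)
Your proof is correct and follows essentially the same route as the paper: the key step --- passing from $p_{M_0}$ to $p_{M_0\cap G_x}$ by Powers-averaging the element $[M_0:M]\,p_{M_0}-p_M$ supported off $M$ and playing the resulting smallness against the uniform lower bound $p_M p_{g_i M g_i^{-1}} p_M \geq r^{-2}p_M$ from Lemma~\ref{lem:invertible-average} --- is exactly the paper's Claim, merely phrased in the quotient $\Cstarred(G)/I$ rather than via an invertible element of $p_M\Cstarred(G)p_M$ lying in $I$, and the bootstrap to a neighbourhood basis of $e$ using finitely many boundary points with open stabilisers is also the paper's. The only (harmless) imprecision is the assertion that every $f\in\contc(G)$ equals $p_Lfp_L$ for small $L$ --- continuous compactly supported functions on a totally disconnected group need not be locally constant --- but the strict convergence $p_M f p_M\to f$ from Proposition~\ref{prop:approximate-unit} yields the same conclusion.
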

\begin{proof}
  We have to show that for all compact open subgroups $K \leq G$ the closed two-sided ideal generated by $p_K$ equals $\Cstarred(G)$.  We start by proving the following claim.

\textbf{Claim}.  Let $I \unlhd \Cstarred(G)$ be a closed two-sided ideal such that $p_K \in I$ for some compact open subgroup $K \leq G$.  Then $p_{K \cap G_x} \in I$ for all $x \in \partial T$ that have an open stabiliser $G_x \leq G$.  \\
Fix $I \unlhd \Cstarred(G)$, $K \leq G$ and $x \in \partial T$ as in the claim.  By Proposition~\ref{prop:lc-powers-group} the group $G$ has the Powers property with respect to $L := K \cap G_x$ and $F := K \setminus L$.  We can write $[K:L] p_K = \sum_{gL \in K/L} u_g p_L$, as shown by Proposition~\ref{prop:averaging-projections}.  So Proposition \ref{prop:powers-averaging} says that there is $r \in \NN$ such that for all $\veps > 0$ there are $g_1, \dotsc, g_n \in G_0$ satisfying $[L : L \cap g_i L g_i^{-1}] \leq r$ and 
\begin{equation*}
  \|\frac{1}{n} \sum_{i = 1}^n u_{g_i} ([K:L] p_K - p_L) u_{g_i}^*\| < \veps
  \eqstop
\end{equation*}
Note that $[L : L \cap g_i^{-1} L g_i] = [L : L \cap g_i L g_i^{-1}] \Delta(g_i^{-1}) \leq r$ for all $i \in \{1, \dotsc, n\}$.  By Lemma \ref{lem:invertible-average} we have 
\begin{equation*}
  p_L u_{g_i} p_L u_{g_i}^* p_L \geq  [L : L \cap g_i^{-1} L g_i]^{-2}p_L \geq  r^{-2} p_L
\end{equation*}
for all $i \in \{1, \dotsc, n\}$.  We showed that there is $0 < \delta := r^{-2} < 1$ such that for all $\veps > 0$ there is $x :=  \frac{[K:L]}{n} \sum_{i = 1}^n p_L u_{g_i}  p_K u_{g_i}^* p_L\in I $ and an invertible element $y := \frac{1}{n} \sum_{i = 1}^n p_L u_{g_i} p_L u_{g_i}^* p_L \in p_L \Cstarred(G)p_L$ such that $\|x - y\| < \veps$ and $\sigma(y) \subset [\delta, 1]$.  Then $\sigma(y^{-1}) \subset [1, \delta^{-1}]$ and in particular $\|y^{-1}\| \leq \delta^{-1}$.  So
  \begin{equation*}
    \|x  \cdot y^{-1} - p_L\|
    \leq \|x - y\| \|y^{-1}\|
    < \veps \delta^{-1}
    \eqstop
  \end{equation*}
  Choosing $\veps \leq \delta$, we conclude that $x  \cdot y^{-1} \in I$ is invertible in $p_L \Cstarred(G)p_L$.  So $p_L \in I$, which proves the claim.

Since $G \grpaction{} \partial T$ is minimal the set
\begin{equation*}
  \Sigma_0 := \{ x \in \partial T \amid G_x \text{ is open and } G_x \cap G_0 \text{ contains a hyperbolic element}\}
\end{equation*}
is dense in $\partial T$.  Moreover, for every set $\Sigma \subset \Sigma_0$ such that $|\Sigma| \geq 3$, the pointwise stabiliser $G_\Sigma$ fixes a point in $T$ and is hence compact.  By faithfulness of $G \grpaction{} \partial T$, we conclude that the compact open subgroups $G_\Sigma$ with $\Sigma \subset \Sigma_0$ and $|\Sigma| \geq 3$ form a neighbourhood basis of $e$ in $G$.  So the claim combined with Proposition~\ref{prop:averaging-projections} shows that $p_L \in \ol{\Cstarred(G) p_K \Cstarred(G)}$ for all compact open subgroups $L \leq G$.

By Proposition \ref{prop:approximate-unit}, $(p_K)$, $K \leq G$ compact open, strictly converges to $1$ in $\Cstarred(G)$, it follows that the closed two-sided ideal generated by all projections $p_K$, $K \leq G$ compact open subgroup, is $\Cstarred(G)$.  This finishes the proof of the proposition.
\end{proof}

\section{\Cstar-simplicity}
\label{sec:cstar-simplicity}

This section has two aims.  First, we show that a \Cstar-simple group must be totally disconnected, extending Proposition~4 of \cite{bekkacowlingdelaharpe94}.  Then we combine results from Sections~\ref{sec:locally-compact-powers-groups}~and~\ref{sec:fullness-of-pK} in order to prove that groups satisfying condition ($*$) are \mbox{\Cstar-simple}.  In connection with examples from Section~\ref{sec:BS-groups}, this gives rise to the first \Cstar-simplicity result for non-discrete groups.

Let $G$ be a locally compact group and $\pi$ a unitary representation of $G$.  We denote by $\tilde \pi$ the *-representation of the maximal group \Cstar-algebra $\Cstarmax(G)$ that is induced by $\pi$.  If $\pi, \rho$ are two unitary representations of $G$, then we say that $\pi$ is weakly contained in $\rho$ and write $\pi \prec \rho$, if $\ker \tilde \pi \supset \ker \tilde \rho$.  If $\pi \prec \rho$ and $\pi \succ \rho$, we say that $\pi$ and $\rho$ are weakly equivalent and write $\pi \sim \rho$.  We refer to \cite[Appendix F]{bekkadelaharpevalette08} for more a good summary of basic properties of weak containment.  It is clear from the definitions that $G$ is \Cstar-simple if and only if $\pi \prec \lambda_G$ implies $\pi \sim \lambda_G$ for every unitary representation $\pi$ of $G$.  We will use this characterisation of \Cstar-simplicity in the proof of the following theorem.  
\begin{theorem}
  \label{thm:cstar-simple-implies-td}
  Let $G$ be a locally compact \Cstar-simple group.  Then $G$ is totally disconnected.
\end{theorem}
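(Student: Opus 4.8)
The plan is to show that the identity component $G^\circ$ is trivial; since $G/G^\circ$ is totally disconnected for every locally compact group, this suffices. The two ingredients are the weak-containment characterisation of \Cstar-simplicity recalled above and the structure theory of connected locally compact groups. First I would prove that a \Cstar-simple group has no non-trivial closed amenable normal subgroup: if $N \trianglelefteq G$ is closed, normal and amenable, then $1_N \prec \lambda_N$, so applying the induction functor $\Ind_N^G$ — which is continuous for weak containment — and using $\Ind_N^G \lambda_N \cong \lambda_G$ together with $\Ind_N^G 1_N \cong \lambda_{G/N}$ yields $\lambda_{G/N} \prec \lambda_G$. \Cstar-simplicity then forces $\lambda_{G/N} \sim \lambda_G$, in particular $\lambda_G \prec \lambda_{G/N}$; since $\lambda_{G/N}$ is inflated from $G/N$ it is trivial on $N$, and weak containment transports this to $\lambda_G$, which is faithful, so $N = \{e\}$.

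Next I would apply this to the closed normal subgroup $H := G^\circ$. Its amenable radical $\mathrm{Rad}(H)$ — the largest closed amenable normal subgroup — is characteristic in $H$, hence normal in $G$, hence trivial by the previous step. Thus $H$ is a connected locally compact group with trivial amenable radical, so it has no non-trivial compact normal subgroup, no non-trivial connected solvable normal subgroup, and trivial centre. By the structure theory of connected locally compact groups (the solution of Hilbert's fifth problem), a connected group with no non-trivial compact normal subgroup is a Lie group; a connected Lie group with trivial solvable radical and trivial centre is centreless semisimple; and a compact simple factor would be a compact normal subgroup, so $H$ has no compact factors. Hence either $H = \{e\}$, in which case $G$ is totally disconnected and we are done, or $H$ is a non-trivial, centreless, connected semisimple Lie group without compact factors.

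The remaining task — and the main obstacle — is to rule out the second alternative, i.e. to turn the failure of \Cstar-simplicity for the \emph{normal subgroup} $H = G^\circ$ into its failure for $G$. Recall that a non-compact connected semisimple Lie group is never \Cstar-simple: its reduced dual is the tempered dual, which always contains the continuum of unitary principal series (and frequently a discrete series), hence is never a single closed point, so $\Cstarred(H)$ has proper non-zero closed ideals. To transport this to $G$ I would use boundary theory: granting the Kalantar–Kennedy correspondence in the second countable locally compact setting, \Cstar-simplicity of $G$ makes $G$, hence its normal subgroup $H$, act freely on the Furstenberg boundary $\partial_{\mathrm F} G$; passing to an $H$-minimal strongly proximal closed subset produces a free $H$-boundary, which is impossible, since every boundary of a non-trivial semisimple Lie group is a factor of a product of flag varieties and therefore has non-trivial parabolic stabilisers (and should that subset be a single point, it is $H$-fixed, again contradicting freeness). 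A more classical alternative is to take a minimal parabolic $P \leq H$, note $\Ind_P^H 1_P \prec \lambda_H$ with $H/P$ compact, and build from it a representation of $G$ weakly contained in but not weakly equivalent to $\lambda_G$. Either way the second alternative is excluded, so $H = G^\circ = \{e\}$ and $G$ is totally disconnected.
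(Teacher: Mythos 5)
Your reduction is sound and essentially the paper's: killing the amenable radical via the quasi-regular representation (the paper does exactly this with the preimage $R \leq G^0$ of the amenable radical of $G^0/K$) and invoking Gleason--Yamabe to land in the case where $G^0$ is a non-trivial, centreless, connected semisimple Lie group without compact factors. The problem is the step you yourself flag as ``the main obstacle'': neither of your two proposed ways of excluding that case actually closes it. The boundary-theoretic route invokes the Kalantar--Kennedy correspondence (\Cstar-simplicity $\Leftrightarrow$ free action on the Furstenberg boundary) ``granting'' its validity for second countable locally compact groups --- but that correspondence is a theorem about \emph{discrete} groups, and no locally compact version was available; this paper exists precisely because that machinery is missing in the non-discrete setting. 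Granting it is assuming a result substantially harder than the one being proved.

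The ``classical alternative'' is the paper's actual strategy, but you stop exactly where the work begins. Setting $H = G^0$ and $\pi = \Ind_H^G(\pi_0)$ for a principal series $\pi_0 \prec \lambda_H$, one gets $\pi \prec \lambda_G$ for free; the content of the theorem is showing $\pi \not\sim \lambda_G$. This does not follow formally from $\pi_0 \not\sim \lambda_H$: induction preserves weak containment but does not reflect weak inequivalence, so $\Ind_H^G(\pi_0) \not\sim \Ind_H^G(\lambda_H) = \lambda_G$ is not automatic. The paper closes this gap by computing $\mathrm{Res}_H^G(\pi)$ via Mackey's subgroup theorem, obtaining $\bigoplus_{\alpha} \pi_0\circ\alpha$ over conjugation automorphisms, and then using finiteness of $\Out(H)$, closedness of supports in the Fell topology, and the CCR property of semisimple Lie groups to conclude that $\mathrm{Res}_H^G(\pi)$ weakly contains only finitely many irreducibles of $H$, whereas $\mathrm{Res}_H^G(\lambda_G) = \lambda_H$ weakly contains the full (infinite) principal series. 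Some argument of this kind --- controlling the restriction back to $H$ --- is indispensable, and its absence leaves the proof incomplete.
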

\begin{proof}
  Let $G$ be a locally compact group that is not totally disconnected.  We have to show that there is a unitary representation $\pi \prec \lambda_G$ such that $\pi \not \sim \lambda_G$.

Since $G$ is not totally disconnected, the connected component $G^0 \leq G$ of the identity is not trivial.  Let $K \unlhd G^0$ the maximal compact normal subgroup.  By the structure theorem for locally compact groups, $G^0/K$ is a connected Lie group.  If $R$ denotes the inverse image in $G^0$ of the amenable radical of $G^0/K$, then $H := G^0/R$ is a connected semi-simple Lie group with trivial centre and hence it is linear.  Note that $R$ is amenable, since it is compact-by-amenable.  Moreover, $R$ is a characteristic subgroup of $G^0$ and hence normal in $G$.  If $R \neq \{e\}$ then the quasi-regular representation $\lambda_{G, R}$ is weakly contained in $\lambda_G$, but it is not injective on $G$.  Hence $\lambda_{G, R} \not \sim \lambda_G$, which finishes the proof

So we may assume that $R = \{e\}$ and hence $H = G^0$ is a connected semi-simple linear Lie group.  Let $\pi_0$ be a principal series representation of $H$ and let $\pi = \mathrm{Ind}_H^G(\pi_0)$ be the induced representation of $G$.  Then $\pi \prec \lambda_G$, since $\pi_0 \prec \lambda_H$.  We show that $\pi$ is not weakly equivalent to $\lambda_G$.  By Mackey's subgroup theorem \cite[Theorem 12.1]{mackey55-induced-representations-I}, we have $\mathrm{Res}_H^G (\pi) \sim \bigoplus_{\alpha \in G} \pi_0 \circ \alpha \prec \bigoplus_{\alpha \in \Aut(H)} \pi_0 \circ \alpha$, where we apply $\alpha \in G$ via its image in $\Aut(H)$.
%
%
%

By the definition of the Fell topology, $\{ \rho \text{ irrep of } H \amid \rho \prec \pi_0\}$ is closed in the unitary dual $\hat H$.  Since every semi-simple Lie group has a finite outer automorphism group, by \cite[Corollary 2]{murakami52}, and $\pi_0 \circ \alpha$ depends up to unitary equivalence only on the image of $\alpha \in \Aut(H)$ in $\Out(H)$, it follows that $\bigcup_{\alpha \in \Aut(H)} \{ \rho \text{ irrep of } H \amid \rho \prec \pi_0 \circ \alpha\}$ is closed in $\hat H$. So if $\rho$ is an irreducible unitary representation of $H$ such that $\rho \prec \bigoplus_{\alpha \in \Aut(H)} \pi_0 \circ \alpha$, then $\rho \prec \pi_0 \circ \alpha$ for some $\alpha \in \Aut(H)$.  Since $H$ is a CCR group by \cite[Theorem 15.5.6]{dixmier77} and since $\pi_0 \circ \alpha$ is irreducible, we conclude that $\rho \cong \pi_0 \circ \alpha$.

We showed that $\mathrm{Res}_H^G(\pi)$ weakly contains only finitely many irreducible representations of $H$. However $\lambda_H = \mathrm{Res}_H^G(\lambda_G)$ weakly contains all principal series representations, of which there are infinitely many.  This implies $\pi \not \sim \lambda_G$ and finishes the proof of the theorem.
%
\end{proof}

\begin{theorem}
  \label{thm:cstar-simplicity}
  Let $G$ be a group satisfying condition ($*$).  Then $\Cstarred(G)$ is simple.
\end{theorem}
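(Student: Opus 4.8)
The plan is to adapt the classical Powers averaging argument, with the averaging projection $p_K$ playing the role that the unit plays in the discrete case and with the Plancherel weight $\vphi$ on $\Cstarred(G)$ replacing the canonical trace. Since every averaging projection is full by Proposition~\ref{prop:pK-full-projection}, it suffices to prove that every non-zero closed two-sided ideal $I \unlhd \Cstarred(G)$ contains $p_K$ for some compact open subgroup $K \leq G$; then the closed two-sided ideal generated by $p_K$, which is contained in $I$, equals $\Cstarred(G)$, forcing $I = \Cstarred(G)$.

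So let $I \neq 0$ be a closed two-sided ideal. First I would fix, by the first assertion of Proposition~\ref{prop:lc-powers-group}, a compact open subgroup $K \leq G$ and a constant $r \in \NN$ such that $G$ has the Powers property with control $r$ with respect to $(K,F)$ for \emph{every} compact set $F \subset G \setminus K$, and I would normalise the Haar measure so that $\mu(K) = 1$, hence $\vphi(p_K) = 1$. Because $p_K$ is full and $I$ is non-zero, $p_K I p_K \neq 0$: if $p_K I p_K = 0$ then $\|p_K a\|^2 = \|p_K a a^* p_K\| = 0$ for all $a \in I$ (as $aa^* \in I$), so $p_K I = I p_K = 0$, whence $a\,\Cstarred(G)\,p_K\,\Cstarred(G) \subseteq (I p_K)\,\Cstarred(G) = 0$ for every $a \in I$ and fullness of $p_K$ gives $a = 0$, a contradiction. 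Now $p_K I p_K$ is a \Cstar-subalgebra of the corner $p_K \Cstarred(G) p_K$, so it contains a non-zero positive element $b$; since the restriction of $\vphi$ to $p_K \Cstarred(G) p_K$ is a faithful state (the corner is unital with unit $p_K$ and $\vphi(p_K) = 1$), we have $\vphi(b) > 0$, and after rescaling I may assume $\vphi(b) = 1$.

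Next I would use that $p_K \contc(G) p_K = \CC p_K + \lspan\{\,p_K u_g p_K : g \in G \setminus K\,\}$ is dense in $p_K \Cstarred(G) p_K$, that the restriction of $\vphi$ is a bounded functional on this corner, and that $\vphi(p_K u_g p_K) = 0$ for $g \notin K$, to conclude that $b - p_K$ lies in the closed linear span of $\{\,p_K u_g p_K : g \in G \setminus K\,\}$. Hence for every $\veps > 0$ there are $h_1,\dots,h_m \in G \setminus K$ and scalars $c_1,\dots,c_m$ such that $x := \sum_{j=1}^m c_j\, p_K u_{h_j} p_K \in \contc(G)$ is supported in the compact set $F := \bigcup_{j=1}^m K h_j K \subset G \setminus K$ and satisfies $x p_K = x$ and $\|\,b - p_K - x\,\| < \veps$. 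Applying Powers averaging (Proposition~\ref{prop:powers-averaging}) to $x$ yields $n \in \NN$ and $g_1,\dots,g_n \in G_0$ with $[K : K \cap g_i K g_i^{-1}] \leq r$ and $\|\,\frac1n\sum_{i=1}^n u_{g_i}(x p_K) u_{g_i}^*\,\| < \veps$; since conjugation by the unitaries $u_{g_i}$ is isometric and $u_{g_i} p_K u_{g_i}^* = p_{g_i K g_i^{-1}}$ by Lemma~\ref{lem:conjugation-of-projections},
\begin{equation*}
  \Bigl\|\,\frac1n\sum_{i=1}^n u_{g_i} b\, u_{g_i}^* \;-\; \frac1n\sum_{i=1}^n p_{g_i K g_i^{-1}}\,\Bigr\|
  \;\leq\;
  \Bigl\|\,\frac1n\sum_{i=1}^n u_{g_i}(x p_K) u_{g_i}^*\,\Bigr\| + \|\,b - p_K - x\,\|
  \;<\; 2\veps .
\end{equation*}
Now set $z := \frac1n\sum_{i=1}^n u_{g_i} b\, u_{g_i}^* \in I$ and compress by $p_K$. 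By Lemma~\ref{lem:invertible-average} applied to $g_i^{-1}$, together with $[K : K \cap g_i^{-1} K g_i] = [K : K \cap g_i K g_i^{-1}] \leq r$ (valid because $g_i \in G_0$, so $\Delta(g_i) = 1$), we get $p_K u_{g_i} p_K u_{g_i}^* p_K \geq r^{-2} p_K$, while trivially $p_K u_{g_i} p_K u_{g_i}^* p_K \leq p_K$; averaging, $y := p_K\bigl(\frac1n\sum_{i=1}^n p_{g_i K g_i^{-1}}\bigr)p_K$ satisfies $r^{-2} p_K \leq y \leq p_K$, so $y$ is invertible in $p_K \Cstarred(G) p_K$ with $\|y^{-1}\| \leq r^2$. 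Since $p_K z p_K \in I$ and $\|\,p_K z p_K - y\,\| < 2\veps$, we obtain $\|\,p_K z p_K\, y^{-1} - p_K\,\| < 2 r^2 \veps$; choosing $\veps$ with $2 r^2 \veps < 1$ makes $p_K z p_K\, y^{-1} \in I$ invertible in $p_K \Cstarred(G) p_K$, and multiplying by its inverse (which lies in $p_K \Cstarred(G) p_K \subseteq \Cstarred(G)$) yields $p_K \in I$. As explained in the first paragraph, this gives $I = \Cstarred(G)$, so $\Cstarred(G)$ is simple.

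I expect the main obstacle — and the reason the "control" constant $r$ was built into the definition of the Powers property — to be exactly this last step: unlike the unit in the discrete setting, $p_K$ is not fixed by conjugation by the $u_{g_i}$, so Powers averaging only drives the averaged conjugates of $b$ close to the sum of projections $\frac1n\sum_i p_{g_i K g_i^{-1}}$, which is neither invertible nor close to a single averaging projection. Compressing back by $p_K$ and invoking the uniform lower bound $p_K u_{g_i} p_K u_{g_i}^* p_K \geq r^{-2} p_K$ of Lemma~\ref{lem:invertible-average} is what converts this sum into an element $y$ that is invertible in the corner with norm-controlled inverse, which is precisely what is needed for the perturbation argument to close. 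Everything else is routine once the Plancherel weight is used to pin down the $p_K$-component of $b$ and the fullness of $p_K$ (Proposition~\ref{prop:pK-full-projection}) is used both to guarantee $p_K I p_K \neq 0$ and to upgrade $p_K \in I$ to $I = \Cstarred(G)$.
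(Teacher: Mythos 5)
Your proof is correct and follows essentially the same route as the paper: Powers averaging with the uniform control $r$ from Proposition~\ref{prop:lc-powers-group}, the lower bound of Lemma~\ref{lem:invertible-average} to make the averaged projections invertible in the corner $p_K \Cstarred(G) p_K$, a perturbation argument to land $p_K$ in the ideal, and fullness of $p_K$ (Proposition~\ref{prop:pK-full-projection}) to conclude. Your treatment is in fact slightly more careful than the paper's in two spots — justifying $p_K I p_K \neq 0$ via fullness, and using $\vphi(b)=1$ to isolate the $p_K$-component of the approximant — but the underlying argument is the same.
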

\begin{proof}
  Take $G \leq \Aut(T)$ as in the statement of the theorem.  Let $I \lhd \Cstarred(G)$ be a non-trivial closed two-sided ideal and take $0 \neq x \in I$ positive.  By Proposition \ref{prop:lc-powers-group} \ref{it:lc-powers-group:F-fixed} there is a compact open subgroup $K \leq G$ and some number $r \in \NN$ such that for all compact subsets $F \subset G \setminus K$, Powers property with control $r$ holds with respect to $(K, F)$ inside $G$.  We choose the Plancherel weight $\vphi$ on $\Cstarred(G)$ satisfying $\vphi(p_K) = 1$.  We may scale $x$  so that $\vphi(p_K x p_K) = 1$.  Fix $\veps > 0$ and find $y_0 \in \contc(G)$ such that $\|x - y_0\| < \frac{\veps}{2}$.  Since $G$ has the Powers property with control $r$ with respect to $K$ and $\supp y_0 \setminus K$, Proposition \ref{prop:powers-averaging} says that there are elements $g_1, \dotsc, g_n \in G_0$ for which Powers averaging gives
  \begin{equation*}
    \| \frac{1}{n} \sum_{i = 1}^n u_{g_i} (y_0p_K - p_K ) u_{g_i}^* \| < \frac{\veps}{2}
  \end{equation*}
  and $[K : K \cap g_i K g_i^{-1}] \leq r$ for all $i \in \{1, \dotsc, n\}$.  We obtain that
\begin{align*}
  & \quad 
  \| \frac{1}{n} \sum_{i = 1}^n u_{g_i} (x - p_K )u_{g_i}^* \| \\
  & \leq
  \| \frac{1}{n} \sum_{i = 1}^n u_{g_i} (y_0  - p_K )u_{g_i}^* \| + \frac{\veps}{2} \\
  & < \frac{\veps}{2} + \frac{\veps}{2}
  \eqstop
\end{align*}
Then also 
\begin{equation*}
  \| \frac{1}{n} \sum_{i = 1}^n p_K u_{g_i} x u_{g_i}^* p_K - \frac{1}{n}\sum_{i = 1}^n p_K u_{g_i} p_K u_{g_i}^* p_K \| < \veps
  \eqstop  
\end{equation*}

Note that $[K : K \cap g_i^{-1} K g_i]  = [K : K \cap g_i K g_i^{-1}] \Delta(g_i^{-1}) \leq r$ for all $i \in \{1, \dotsc, n\}$.  By Lemma \ref{lem:invertible-average}, we have $p_K u_g p_K  u_g^* p_K \geq [K: K \cap g^{-1} K g]^{-2} p_K$ for all $g \in G$.  This implies that $\frac{1}{n} \sum_{i = 1}^n p_K u_{g_i} p_K u_{g_i}^* p_K \geq r^{-2} p_K$.

 Summarising the proof up to now, we showed that there is $0 < \delta : = r^{-2} < 1$ such that for all $\veps > 0$ there is $a := \frac{1}{n} \sum_{i = 1}^n p_K u_{g_i} x u_{g_i}^* p_K \in I $ and an invertible element $b := \frac{1}{n} \sum_{i = 1}^n p_K u_{g_i} p_K u_{g_i}^* p_K \in p_K\Cstarred(G)p_K$ such that $\|a - b\| < \veps$ and $\sigma(b) \subset [\delta, 1]$.  Then $\sigma(b^{-1}) \subset [1, \delta^{-1}]$ and in particular $\|b^{-1}\| \leq \delta^{-1}$.  So
  \begin{equation*}
    \|a  \cdot y^{-1}  - p_K\|
    \leq \|a - b\| \|b^{-1}\|
    < \veps \delta^{-1}
    \eqstop
  \end{equation*}
  Choosing $\veps \leq \delta$, we conclude that $a \cdot b^{-1}  \in I$ is invertible in $p_K \Cstarred(G)p_K$.  So $p_K \in I$.  By Lemma~\ref{prop:pK-full-projection}, we obtain $I = \Cstarred(G)$.  This finishes the proof.
\end{proof}

\subsection{Applications}
\label{sec:applications}

In this section we apply Theorem \ref{thm:cstar-simplicity} to two problems of independent interest.  We first give to the best of our knowledge the first non-trivial examples of simple reduced Hecke-\Cstar-algebras and then show that certain groups acting on trees are not of type ${\rm I}$.
\begin{corollary}
  \label{cor:simple-Hecke-algebra}
  Let $T$ be thick tree and $\Gamma \leq \Aut(T)$ some not necessarily closed group acting without proper invariant subtree. Let $\Lambda$ be some vertex stabiliser of in $\Gamma$ and assume that there is a finite index subgroup $\Lambda_0 \leq \Lambda$ such that $\cN_\Gamma(\Lambda_0)/\Lambda_0$ contains an element of infinite order.  Then $\Cstarred(\Gamma, \Lambda)$ is simple.
\end{corollary}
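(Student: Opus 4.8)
The plan is to realise $\Cstarred(\Gamma,\Lambda)$ as a full corner of the reduced group \Cstar-algebra of a locally compact group satisfying condition~($*$), and then quote Theorem~\ref{thm:cstar-simplicity}. Since $\Lambda$ is a vertex stabiliser in $\Gamma$, say $\Lambda=\Gamma_\rho$, the pair $\Lambda\leq\Gamma$ is a discrete Hecke pair, so I form the Schlichting completion $G:=\Gamma\sslash\Lambda$, a totally disconnected locally compact group with compact open subgroup $K:=\ol{\iota(\Lambda)}$ (I take $T$ locally finite, so that $\Aut(T)$ is locally compact; this holds in the cases of interest). The action of $\Gamma$ on $T$ extends to a continuous action of $G$ on $T$ by automorphisms: because $\Gamma$ has no proper invariant subtree, the orbit $\Gamma\rho$ spans $T$, an automorphism of a locally finite tree is determined by its restriction to a spanning set of vertices, and one checks that this realises $G$ as a closed subgroup of $\Aut(T)$ which again has no proper invariant subtree (a $G$-invariant subtree is \emph{a fortiori} $\Gamma$-invariant).

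Next I transfer the group-theoretic hypothesis from $\Gamma$ to $G$. Put $\Lambda_0\leq\Lambda$ as in the statement, $K_0:=\ol{\iota(\Lambda_0)}\leq K$, a compact open subgroup of finite index. If $g\in\cN_\Gamma(\Lambda_0)$ has infinite order modulo $\Lambda_0$, then $\iota(g)$ normalises $K_0$, and $\iota(g)$ still has infinite order modulo $K_0$: indeed $K_0$ fixes $\rho$, so $\iota(g)^n\in K_0$ forces $g^n\in\Lambda$, hence $g^n\in\Lambda\cap\cN_\Gamma(\Lambda_0)=\cN_\Lambda(\Lambda_0)$, and since $[\cN_\Lambda(\Lambda_0):\Lambda_0]\leq[\Lambda:\Lambda_0]<\infty$ a further power of $g$ lies in $\Lambda_0$, a contradiction. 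Thus $\cN_G(K_0)/K_0$ contains an element of infinite order. Moreover $\iota(g)$ is contained in no compact subgroup of $G$ (a monothetic compact group would meet $K_0$ in a finite index open subgroup, returning $\iota(g)$ to $K_0$); as $T$ is locally finite this excludes $\iota(g)$ being elliptic or an inversion, so $\iota(g)$ is hyperbolic, and since it normalises $K_0$ the modular-function index formula gives $\iota(g)\in G_0$.

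It then remains to check that $G$ satisfies condition~($*$). The group $G$ has no non-trivial compact normal subgroup: such a subgroup, or its characteristic index-two elliptic part, has a non-empty fixed subtree, which is $G$-invariant and hence all of $T$ by faithfulness and minimality, forcing triviality; the only remaining possibility is an order-two central inversion, but that would force $G$ to stabilise a single edge, contradicting thickness. The main obstacle is non-amenability of $G$. Suppose $G$ were amenable; by Proposition~\ref{prop:adams-ballmann} it fixes a point of $V(T)\cup E(T)\cup\partial T$. It cannot fix a vertex or an edge, since it contains the hyperbolic element $\iota(g)$, so it fixes some $x\in\partial T$. Applying Lemma~\ref{lem:strucutre-of-point-stabilisers} with $g'\in\{\iota(g),\iota(g)^{-1}\}$ chosen so that $x$ is its attracting fixed point, I get a compact open $K'\leq G$ with $g'K'g'^{-1}\supseteq K'$ and $\bigcup_n g'^nK'g'^{-n}$ equal to the set of elliptic elements of $G$. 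If the inclusion is proper then $\Delta_G(g')\neq1$, contradicting $\iota(g)\in G_0$; if it is an equality then the elliptic set is the compact group $K'$, so $G=K'\rtimes\langle g'\rangle$ and the $G$-orbit of a $K'$-fixed vertex (on the axis of $g'$) spans only that axis, a proper $G$-invariant subtree, contradicting the hypothesis on $\Gamma$. Hence $G$ is non-amenable, and $G$ satisfies the first formulation of condition~($*$) with compact open subgroup $K_0$ and element $\iota(g)$.

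Finally, Theorem~\ref{thm:cstar-simplicity} gives that $\Cstarred(G)$ is simple, and Proposition~\ref{prop:pK-full-projection} that $p_K$ is a full projection in $\Cstarred(G)$; therefore the corner $p_K\Cstarred(G)p_K$, a full corner of a simple \Cstar-algebra, is simple. Combining the \mbox{*-isomorphism} $p_K\Cstarred(G)p_K\cong\Cstarred(G,K)$ recorded in Section~\ref{sec:hecke-cstar-algebras} with Tzanev's identification $\Cstarred(G,K)\cong\Cstarred(\Gamma,\Lambda)$, we conclude that $\Cstarred(\Gamma,\Lambda)$ is simple.
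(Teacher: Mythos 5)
Your proof is correct and follows essentially the same route as the paper: identify $\Cstarred(\Gamma,\Lambda)$ with the corner $p_K\Cstarred(G)p_K$ of the Schlichting completion (the paper realises $G$ directly as $\ol{\Gamma}\leq\Aut(T)$ rather than re-embedding the abstract completion, but these are the same group), verify condition~($*$), and apply Theorem~\ref{thm:cstar-simplicity}. The only notable divergence is in checking non-amenability: the paper gets it in one line from minimality of $G\grpaction{}\partial T$ (Proposition~\ref{prop:equivalence-minimal-action}) together with thickness and Proposition~\ref{prop:adams-ballmann}, whereas you rule out a fixed boundary point by hand via Lemma~\ref{lem:strucutre-of-point-stabilisers} and the modular function — longer, but sound.
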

\begin{proof}
  Take $\Lambda = \Gamma_\rho \leq \Gamma \leq \Aut(T)$ as in the statement of the corollary.  Let $G = \ol{\Gamma}$ and $K = \ol{\Lambda}$.  Then the natural bijection between $G/K$ and $\Gamma/\Lambda$ conjugates $G$ and $\Gamma \sslash \Lambda$.  So by \cite[Theorem~4.2]{tzanev03}, we have $\Cstarred(\Gamma, \Lambda) \cong p_K \Cstarred(G) p_K$.  In view of Theorem \ref{thm:cstar-simplicity}, it hence suffices to show that $G$ satisfies condition ($*$).

Since $\Gamma$ acts on $T$ without any proper minimal subtree, also $G$ does so.  So Proposition \ref{prop:equivalence-minimal-action} shows that $G \grpaction{} \partial T$ is minimal.  Since $T$ is thick, $\partial T$ is a Cantor space.  So $G$ does not fix a point in $\rV(T) \cup \rE(T) \cup \partial T$.  Now Proposition \ref{prop:adams-ballmann} implies that $G$ is not amenable.  The closure of any finite index subgroup of $\Lambda$ is open in $K$.  So the closure $L$ of $\Lambda_0$ is a compact open subgroup and $\cN_G(L)/L = \cN_\Gamma(\Lambda_0)/\Lambda_0$ contains an element of infinite order.  We verified condition ($*$), finishing the proof of the corollary.
\end{proof}

\begin{corollary}
  \label{cor:not-type-I}
  Let $T$ be a thick tree and $G \leq \Aut(T)$ be a closed subgroup acting minimally on $\partial T$.  Assume that there is $x \in \partial T$ such that 
  \begin{itemize}
  \item $Kx$ is finite for some compact open subgroup $K \leq G$, and
  \item there is some hyperbolic element in $G_0 \cap G_x$.
  \end{itemize}
  Then $G$ is not a type ${\rm I}$ group.
\end{corollary}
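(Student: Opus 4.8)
The plan is to reduce the statement to the $\Cstar$-simplicity theorem already obtained and then invoke the structure theory of type~${\rm I}$ $\Cstar$-algebras, using the averaging projections to force a contradiction via a double-coset count.

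First I would verify that $G$ satisfies condition~($*$) in the guise of the second item of Theorem~\ref{thm:characterisation-condition-star}: by hypothesis $T$ is (locally finite and) thick, $G \leq \Aut(T)$ is closed and acts minimally on $\partial T$, and $G_x \cap G_0$ contains a hyperbolic element; moreover Proposition~\ref{prop:characaterisation-open-stabiliser} converts the assumption ``$Kx$ is finite for some compact open $K \leq G$'' into ``$G_x \leq G$ is open''. Thus Theorem~\ref{thm:cstar-simplicity} applies and $\Cstarred(G)$ is simple.

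Next, suppose for contradiction that $G$ is a type~${\rm I}$ group. Then $\Cstar(G)$ is a type~${\rm I}$ (GCR) $\Cstar$-algebra, hence so is its quotient $\Cstarred(G)$. Here I would use the structural fact that a \emph{simple} type~${\rm I}$ $\Cstar$-algebra is elementary: every irreducible representation of a simple $\Cstar$-algebra is faithful and, by type~${\rm I}$-ness, surjects onto the algebra of compact operators, so $\Cstarred(G) \cong \ko(H)$ for some nonzero Hilbert space $H$ (see \cite{dixmier77}). Then I would bring in the averaging projections. Fix the hyperbolic $g_0 \in G_0 \cap G_x$, pick a vertex $\rho$ on its axis, and set $K' := G_\rho$, a compact open subgroup of $G$. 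The element $p_{K'} \in \contc(G) \subset \Cstarred(G)$ is a nonzero projection, hence under the isomorphism with $\ko(H)$ has finite rank $n \geq 1$; therefore $p_{K'}\Cstarred(G)p_{K'} \cong \Mat{n}{\CC}$ is finite-dimensional. By the identification of such corners with reduced Hecke $\Cstar$-algebras (Section~\ref{sec:hecke-cstar-algebras}), $p_{K'}\Cstarred(G)p_{K'} \cong \Cstarred(G,K')$ is the norm closure of the Hecke algebra $\CC(G,K')$, so the latter must equal this finite-dimensional algebra; in particular the double coset space $K'\backslash G/K'$, which indexes a linear basis of $\CC(G,K')$, is finite.

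Finally, I would exhibit infinitely many double cosets. Writing $\ell \geq 1$ for the translation length of $g_0$ and using that $\rho$ lies on its axis, one has $\rmd(\rho, g_0^{n}\rho) = n\ell$ for all $n \in \NN$; since $K' = G_\rho$ fixes $\rho$, an equality $K'g_0^{n}K' = K'g_0^{m}K'$ would force $\rmd(\rho, g_0^{n}\rho) = \rmd(\rho, g_0^{m}\rho)$, hence $n = m$. So the cosets $K'g_0^{n}K'$, $n \in \NN$, are pairwise distinct, contradicting the finiteness of $K'\backslash G/K'$; therefore $G$ is not of type~${\rm I}$. I expect the only genuinely non-routine ingredient to be the structural input that a simple type~${\rm I}$ $\Cstar$-algebra is elementary; the rest is a direct assembly of results already in the paper. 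The argument is insensitive to whether $G$ is discrete: if it is, $G_\rho$ may be trivial, $K'\backslash G/K' = G$, and the same count just says the powers $g_0^{n}$ are pairwise distinct.
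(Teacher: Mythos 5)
Your proof is correct, and while the reduction to condition ($*$) and the invocation of Theorem \ref{thm:cstar-simplicity} coincide with the paper, the way you derive the contradiction is genuinely different. The paper first passes to the open subgroup $G_0$ so as to assume $G$ unimodular, identifies $\rL(G)$ with $\bo(H)$, uses that the Plancherel weight is then the canonical trace to see that $p_K$ is a finite projection, and finally contradicts $p_K\rL(G)p_K \cong \rM_n(\CC)$ by showing that $p_K u_{g^n} p_K \to 0$ weakly but not strongly --- the latter requiring the uniform lower bound $p_K u_{g^n}^* p_K u_{g^n} p_K \geq [K:K\cap g^nKg^{-n}]^{-2}p_K$ from Lemma \ref{lem:invertible-average} together with Proposition \ref{prop:characterisation-fixed-point-with-open-stabiliser}. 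You instead stay entirely at the $\Cstar$-level: simplicity plus GCR gives $\Cstarred(G)\cong\ko(H)$, every projection in $\ko(H)$ has finite rank, so the Hecke corner $p_{K'}\Cstarred(G)p_{K'}\cong\Cstarred(G,K')$ is finite-dimensional, and this is refuted by counting the double cosets $K'g_0^nK'$, which are pairwise distinct since $\rmd(\rho,g_0^n\rho)=n\ell$. Your route buys two simplifications: it needs no unimodularity reduction and no weight theory, and the final count uses only that $g_0$ is hyperbolic (any vertex stabiliser and any hyperbolic element would do), whereas the paper's convergence argument really needs the boundedness of the indices $[K:K\cap g^nKg^{-n}]$. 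Both arguments rest on the same nontrivial structural input, namely that a simple type ${\rm I}$ (equivalently, by Glimm's theorem in the separable setting, GCR) $\Cstar$-algebra is elementary; the paper uses it implicitly when asserting that $\lambda$ is a multiple of an irreducible representation. One small point worth making explicit: linear independence of the elements $p_{K'}u_{g_0^n}p_{K'}$ in the corner follows because each is a nonzero positive multiple of $\mathbb{1}_{K'g_0^nK'}$ in $\contc(G)$ and these functions have pairwise disjoint supports.
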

\begin{proof}
  Take $G \leq \Aut(T)$ as in the statement of the corollary and assume that $G$.  Then by Proposition \ref{prop:characaterisation-open-stabiliser}, $G$ satisfies property (*) and hence also $G_0$ satisfies (*).  Assuming that $G$ is a type ${\rm I}$ group, also its open subgroup $G_0$ is a type ${\rm I}$ group.  So we may assume in addition that $G$ is unimodular.

  By Theorem \ref{thm:cstar-simplicity} $G$ is \Cstar-simple.  Since it is also a type ${\rm I}$ group, $\lambda$ is unitarily equivalent to a multiple of an irreducible representation.  In particular, $\rL(G) \cong \bo(H)$ for some Hilbert space $H$.  Since $G$ is unimodular, the Plancherel weight $\vphi$ on $\rL(G)$ agrees with the unique tracial weight on $\bo(H)$.  So if $K \leq G$ denotes some compact open subgroup of $G$, the fact that $\vphi(p_K) < \infty$, implies that  $p_k$ is a finite projection in $\bo(H)$.  This shows that $p_K \rL(G) p_K$ is isomorphic to a finite type ${\rm I}$ factor, i.e. $p_K \rL(G) p_K \cong \rM_n(\CC)$ for some $n \in \NN$.  Take some hyperbolic element $g \in G_x$.  Then by $p_k u_g p_K$ satisfies $\langle p_K u_{g^n} p_K \xi, \eta \rangle \ra 0$ for all $\xi, \eta \in p_K \Ltwo(G)$.  This means that $p_K u_{g^n} p_K$ converges to $0$ weakly.  However, $p_k u_{g^n}^* p_K u_{g^n} p_K \geq [K : K \cap g^n K g^{-n}]^{-2} p_K$, which is bounded from below by Proposition \ref{prop:characterisation-fixed-point-with-open-stabiliser}.   So $p_K u_g p_K$ does not converge strongly to $0$, contradicting the isomorphism $p_K \rL(G) p_K \cong \rM_n(\CC)$.  This finishes the proof of the corollary.
\end{proof}

\section{KMS-weights and von Neumann factors}
\label{sec:KMS-weights-and-factors}

In this section we apply Powers group methods to prove uniqueness of certain natural KMS-weights on the reduced group \Cstar-algebra of certain groups $G$ satisfying condition ($*$).  This uniqueness allows us to conclude factoriality of the associated group von Neumann algebra $\rL(G)$ and to determine its type.

\begin{theorem}
  \label{thm:unique-KMS-state}
  Let $G$ be a group satisfying condition ($*$) and assume that some compact open subgroup of $G$ is topologically finitely generated.  Let $\vphi$ be a KMS-weight for the natural one-parameter group $(\sigma_t)_t$ on $\Cstarred(G)$.  Assume that $\vphi(p_K) < \infty$ for all compact open subgroups $K \leq G$.  Then $\vphi$ is a Plancherel weight on $\Cstarred(G)$.
\end{theorem}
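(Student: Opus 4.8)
The plan is to verify the hypotheses of Lemma~\ref{lem:characterisation-plancherel-weight}. Each $u_g$ lies in the multiplier algebra of $\Cstarred(G)$ and $\vphi$ is finite on every averaging projection, so $u_g p_K = (p_K u_g^*)^* p_K$ lies in $\mathfrak{m}_\vphi$ and the scalars $\vphi(u_g p_K)$ are defined; it thus suffices to produce a left Haar measure $\mu$ with $\vphi(u_g p_K) = \mu(K)^{-1}$ for $g \in K$ and $\vphi(u_g p_K) = 0$ for $g \notin K$. Everything hinges on the vanishing statement: granting $\vphi(u_g p_K) = 0$ for all compact open $K \le G$ and all $g \in G \setminus K$, Proposition~\ref{prop:averaging-projections} applied with $K := L_1 \cap L_2$ shows that only the trivial coset survives in $\vphi(p_{L_i}) = \tfrac1{[L_i:K]}\sum_{aK\in L_i/K}\vphi(u_a p_K)$, so $\mu(L)\vphi(p_L)$ is independent of the compact open subgroup $L$, and one normalises $\mu$ so that this constant equals $1$.

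First I would reduce the vanishing to a neighbourhood basis of the identity. If $\vphi(u_h p_K) = 0$ for all $h \notin K$ and all $K$ in such a basis, then for an arbitrary compact open $L$ and $g \notin L$ one picks $K \le L$ from the basis and, using $p_L = \tfrac{1}{[L:K]}\sum_{aK \in L/K} u_a p_K$, gets $\vphi(u_g p_L) = \tfrac{1}{[L:K]}\sum_{aK} \vphi(u_{ga} p_K) = 0$, because $g \notin L$ and $a \in L$ force $ga \notin L \supseteq K$. Proposition~\ref{prop:lc-powers-group}\,(iii) supplies exactly such a basis: compact open subgroups $K$ for which $G$ has the Powers property with respect to $(K, gK)$ for every $g \in G \setminus K$.

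So fix such a $K$ and $g \in G \setminus K$ and set $c := \vphi(u_g p_K)$. Given $\veps > 0$, apply Powers averaging (Proposition~\ref{prop:powers-averaging}) to $x = u_g p_K$, which has support $gK \subseteq G \setminus K$: we obtain $g_1, \dots, g_n \in G_0$ with $[K : K \cap g_i K g_i^{-1}] \le r$ and $\| y_\veps \| < \veps$, where $y_\veps := \tfrac{1}{n}\sum_{i=1}^n u_{g_i}(u_g p_K) u_{g_i}^*$. Since $g_i \in G_0$ we have $\Delta(g_i) = 1$, hence $\sigma_t^\vphi(u_{g_i}) = \Delta(g_i)^{it} u_{g_i} = u_{g_i}$, so $u_{g_i}$ lies in the centraliser of $\vphi$ and conjugation by it preserves $\vphi$ (a standard consequence of the KMS condition, deducible from Proposition~\ref{prop:KMS-exchange-property}; cf.\ \cite{kustermansvaes99}). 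Therefore $\vphi(y_\veps) = \tfrac{1}{n}\sum_i \vphi(u_{g_i}(u_g p_K)u_{g_i}^*) = c$ for every $\veps$, and likewise $\vphi(p_{g_i K g_i^{-1}}) = \vphi(p_K)$.

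The decisive point, which I expect to be the main obstacle, is a bound $|\vphi(y_\veps)| \le C \|y_\veps\|$ with $C$ independent of $\veps$; since $c = \vphi(y_\veps)$ and $\|y_\veps\| < \veps$ this forces $c = 0$, completing the proof. By Lemma~\ref{lem:conjugation-of-projections}, $u_{g_i}(u_g p_K)u_{g_i}^* = u_{g_i g g_i^{-1}} p_{g_i K g_i^{-1}}$, which is left-invariant under $g_i g K g^{-1} g_i^{-1}$ and right-invariant under $g_i K g_i^{-1}$; hence $p_{P_-} y_\veps p_{P_+} = y_\veps$ for the compact open subgroups $P_- := \bigcap_{i} g_i g K g^{-1} g_i^{-1}$ and $P_+ := \bigcap_{i} g_i K g_i^{-1}$, and Cauchy--Schwarz for the weight (all elements involved lie in $\mathfrak{n}_\vphi$ because $\vphi$ is finite on averaging projections) gives
\begin{equation*}
  |\vphi(y_\veps)| = |\vphi(p_{P_-} y_\veps)| \le \vphi(p_{P_-})^{1/2}\,\vphi(y_\veps^* y_\veps)^{1/2} \le \vphi(p_{P_-})^{1/2}\,\vphi(p_{P_+})^{1/2}\,\|y_\veps\|
  \eqstop
\end{equation*}
It remains to bound $\vphi(p_{P_\pm})$ uniformly in $\veps$, and this is exactly where the standing hypothesis is used. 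By Proposition~\ref{prop:characterisation-top-finitely-generated} the compact open subgroup $K$ is topologically finitely generated; moreover $g_i \in G_0$ forces $[g_i K g_i^{-1} : K \cap g_i K g_i^{-1}] = [K : K \cap g_i^{-1} K g_i] = [K : K \cap g_i K g_i^{-1}] \le r$, whence $K \cap g_i K g_i^{-1}$ is a subgroup of $K$ of index $\le r$ and $g_i(gKg^{-1}\cap K)g_i^{-1}\cap K$ is a subgroup of $K$ of index $\le r\,[K:gKg^{-1}\cap K]$. Since by Lemma~\ref{lem:control-number-subgroups} there are only finitely many subgroups of $K$ of any given finite index, both families lie in finite sets of subgroups depending only on $K$, $g$ and $r$; consequently $P_+ \supseteq \bigcap_i (K \cap g_i K g_i^{-1})$ and $P_- \supseteq \bigcap_i (g_i(gKg^{-1}\cap K)g_i^{-1}\cap K)$ each contain one \emph{fixed} compact open subgroup $N_0 = N_0(K,g,r)$, so that $\vphi(p_{P_\pm}) \le \vphi(p_{N_0}) < \infty$ by hypothesis. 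Taking $C := \vphi(p_{N_0})$ finishes the argument.
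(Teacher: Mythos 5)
Your proposal is correct and follows essentially the same route as the paper: reduce to the vanishing $\vphi(u_g p_K)=0$ via Lemma~\ref{lem:characterisation-plancherel-weight} and Proposition~\ref{prop:averaging-projections}, use Proposition~\ref{prop:lc-powers-group}~(iii) and Powers averaging, exploit that $g_i\in G_0$ makes conjugation by $u_{g_i}$ preserve $\vphi$, and obtain a uniform bound $|\vphi(y_\veps)|\le C\|y_\veps\|$ from topological finite generation via Lemma~\ref{lem:control-number-subgroups}. The only (cosmetic) difference is in that last bound: the paper compresses by a single fixed $p_L$ with $L$ the intersection of all closed subgroups of $K$ of index at most $r$ and estimates $\vphi(p_L y_\veps p_L)$ directly in the GNS representation, whereas you compress by the $\veps$-dependent projections $p_{P_\pm}$ and then bound $\vphi(p_{P_\pm})$ uniformly by the same finite-intersection trick.
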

\begin{proof}
  Let $\vphi$ be a KMS-weight as described in the statement of the theorem.  By Lemma \ref{lem:characterisation-plancherel-weight} it suffices to show that there is a left Haar measure $\mu$ on $G$ such that for all compact open subgroups $K \leq G$ and for all $g \in G$ we have
  \begin{equation*}
    \vphi(u_g p_K) =
    \begin{cases}
      \frac{1}{\mu(K)} & \eqcomma g \in K \\
      0 &\eqcomma \text{otherwise.}
    \end{cases}
  \end{equation*}

\textbf{Claim}. If $K \leq G$ is a compact open subgroup and $g \in G \setminus K$, then $\vphi(u_g p_K) = 0$. \\
Using Proposition \ref{prop:averaging-projections} it suffices to show the claim for a neighbourhood base of $e$ consisting of compact open subgroups $K \leq G$.  So fix a neighbourhood $e \in N \subset G$.  By Proposition \ref{prop:lc-powers-group} we find a compact open subgroup $K \leq G$ that is contained in $N$ such that $G$ has the Powers property with respect to $K$ and $gK$ for all $g \in G \setminus K$.  Proposition \ref{prop:powers-averaging} says that there is $r \in \NN$ such that for every $\veps > 0$ there are elements $g_1, \dots g_n \in G_0$ such that
\begin{equation*}
  \|\frac{1}{n} \sum_{i = 1}^n u_{g_i} u_g p_K u_{g_i}^*\| < \veps
\end{equation*}
and $[K : K \cap g_i K g_i^{-1}] \leq r$ for all $i \in \{1, \dotsc, n\}$.  Using a GNS-construction associated with $\vphi$, this gives rise to the following estimate for any averaging projection $p_L$ with $L \leq G$ compact open.
\begin{equation*}
  \vphi(p_L \frac{1}{n} \sum_{i = 1}^n u_{g_i} u_g p_K u_{g_i}^*p_L)
  =
  \langle \frac{1}{n} \sum_{i = 1}^n u_{g_i} u_g p_K u_{g_i}^* p_L, p_L \rangle_\vphi
  \leq
  \|\frac{1}{n} \sum_{i = 1}^n u_{g_i} u_g p_K u_{g_i}^*\| \|p_L\|_{2, \vphi}^2
  \leq \veps \vphi(p_L)^2
  \eqstop
\end{equation*}
Since $K$ is topologically finitely generated by Proposition \ref{prop:characterisation-top-finitely-generated}, Lemma~\ref{lem:control-number-subgroups} says that there are only finitely many subgroups in $K$ that have index bounded by $r$.  The intersection $L$ of all closed subgroups of $K$ which have index bounded by $r$ satisfies $g_i^{-1} L g_i \leq K$ for all $i \in \{1, \dotsc, n\}$.  Lemma \ref{lem:conjugation-of-projections} implies that $p_K u_{g_i}^* p_L = p_K u_{g_i}^*$ for all $i \in \{1, \dotsc, n\}$.  Using the KMS-condition with Proposition~\ref{prop:KMS-exchange-property} and $\Delta(g_i) = 1$ for all $i \in \{1, \dotsc, n\}$ we see that
\begin{align*}
  \veps \vphi(p_L)^2 
  & \geq \vphi(p_L \frac{1}{n} \sum_{i = 1}^n u_{g_i} u_g p_K u_{g_i}^* p_L) \\
  & = \vphi(\frac{1}{n} \sum_{i = 1}^n u_{g_i} u_g p_K u_{g_i}^*) \\
  & = \frac{1}{n} \sum_{i = 1}^n  \vphi(u_g p_K u_{g_i}^* \sigma_{-i}(u_{g_i}) ) \\
  & = \frac{1}{n} \sum_{i = 1}^n  \Delta(g_i) \vphi(u_g p_K u_{g_i}^* u_{g_i} ) \\
  & = \vphi(u_g p_K)
  \eqstop
\end{align*}
Since $\veps > 0$ is arbitrary and the choice of $L$ is independent of $\veps$, we see that $\vphi(u_g p_K) = 0$.  This proves the claim.

Fix a compact open subgroup $L \leq G$ and let $\mu$ be the left Haar measure of $G$ satisfying $\mu(L) = \frac{1}{\vphi(p_L)}$.  If $K \leq G$ is an arbitrary compact open subgroup, we can apply Proposition \ref{prop:averaging-projections} and the claim to obtain
\begin{align*}
  \frac{1}{\mu(L)}
  & =
  \vphi(p_L) \\
  & =
  \frac{1}{[L : K \cap L]} \sum_{g (K \cap L) \in L / K \cap L} \vphi(u_g p_{K \cap L}) \\
  & =
  \frac{1}{[L : K \cap L]} \vphi(p_{K \cap L}) \\
  & = 
  \frac{1}{[L : K \cap L]} \sum_{g (K \cap L)  \in K / K \cap L} \vphi(u_g p_{K \cap L}) \\
  & =
  \frac{[K : K \cap L]}{[L : K \cap L]} \vphi(p_K)
  \eqstop
\end{align*}
We infer that $\vphi(p_K) = \frac{1}{\mu(K)}$, finishing the proof of the proposition.
\end{proof}

Since a Plancherel weight on $\rL(G)$ restricts to the corresponding Plancherel weight on $\Cstarred(G)$, the previous theorem allows us to conclude factoriality of $\rL(G)$.  We are also able to compute its type.
\begin{theorem}
  \label{thm:factoriality}
  Let $G$ be a group satisfying condition ($*$).  Further assume that some compact open subgroup of $G$ is topologically finitely generated.  Then $\rL(G)$ is a factor and $\rS(\rL(G)) = \ol{\Delta(G)}$.
  \begin{itemize}
  \item If $G$ is discrete, then $\rL(G)$ is a type ${\rm II}_1$ factor.
  \item If $G$ is unimodular but not discrete, then $\rL(G)$ is a type ${\rm II}_\infty$ factor.
  \item If $\Delta(G) = \lambda^\ZZ$ for some $\lambda \in (0,1)$, then $\rL(G)$ is a type ${\rm III}_\lambda$ factor.
  \item If $\Delta(G)$ is not singly generated, then $\rL(G)$ is a type ${\rm III}_1$ factor.
  \end{itemize}
\end{theorem}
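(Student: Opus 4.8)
The strategy is to deduce factoriality of $\rL(G)$ from the uniqueness of KMS-weights proved in Theorem \ref{thm:unique-KMS-state}, and then to read off the type from Connes' $\rS$-invariant via Theorem \ref{thm:S-invariant-group-von-Neumann-algebra}.

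First I fix a Plancherel weight $\vphi$ on $\rL(G)$; its restriction to $\Cstarred(G)$ is a KMS-weight for the natural one-parameter group $(\sigma_t)_t$ and satisfies $\vphi(p_K) = \mu(K)^{-1} < \infty$ for every compact open subgroup $K \leq G$. Suppose towards a contradiction that $\rL(G)$ is not a factor and choose a projection $p \in \cZ(\rL(G))$ with $0 \neq p \neq 1$. Since the centre is fixed by the modular flow, $p \in \rL(G)^\vphi$, so $\psi := \vphi(p\,\cdot)$ is again a KMS-weight for $(\sigma_t)_t$ on $\Cstarred(G)$; it is non-zero, and $\psi(p_K) = \vphi(p\,p_K) \leq \vphi(p_K) < \infty$ for all compact open $K$. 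By Theorem \ref{thm:unique-KMS-state}, $\psi$ is a Plancherel weight, hence $\psi = c\vphi$ for some $c \in (0,1]$, because any two Plancherel weights are proportional. Passing to the GNS-representation of $\vphi$ on $\Ltwo(G)$ and using $\Lambda_\vphi(px) = p\Lambda_\vphi(x)$, the identity $\psi(x^*x) = c\,\vphi(x^*x)$ becomes $\langle p\,\eta, \eta\rangle = c\langle \eta,\eta\rangle$ for $\eta$ in a dense subspace of $\Ltwo(G)$, so $p = c\cdot 1$; as $p$ is a projection, $p \in \{0,1\}$, a contradiction. Hence $\rL(G)$ is a factor.

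Next I observe that $G_0$ is an open, hence closed and totally disconnected, unimodular subgroup of $G$ which again satisfies condition ($*$) and, by Proposition \ref{prop:characterisation-top-finitely-generated}, has a topologically finitely generated compact open subgroup; applying the argument of the previous paragraph to $G_0$ (where now the modular flow is trivial, so that ``KMS-weight'' reads ``tracial weight'') shows that $\rL(G_0)$ is a factor. Theorem \ref{thm:S-invariant-group-von-Neumann-algebra} then yields $\rS(\rL(G)) = \ol{\Delta(G)}$.

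It remains to distinguish the four cases. If $G$ is discrete, then $\Delta \equiv 1$, the Plancherel weight is the canonical tracial state, and $\rL(G)$ is a finite factor; it is infinite-dimensional because $G$ is non-amenable, hence of type ${\rm II}_1$. If $G$ is unimodular but not discrete, the Plancherel weight is a trace with $\vphi(1) = \sup_K \vphi(p_K) = +\infty$, so $\rL(G)$ is a semi-finite factor that is not finite, i.e.\ of type ${\rm I}_\infty$ or ${\rm II}_\infty$; the case ${\rm I}_\infty$ is excluded exactly as in the proof of Corollary \ref{cor:not-type-I}: for a hyperbolic element $g \in G_0 \cap G_x$ one has $\langle p_K u_{g^n} p_K \xi, \eta\rangle \to 0$ while $p_K u_{g^n}^* p_K u_{g^n} p_K \geq [K : K\cap g^n K g^{-n}]^{-2} p_K$ stays bounded below by Proposition \ref{prop:characterisation-fixed-point-with-open-stabiliser}, which is impossible in a type ${\rm I}_\infty$ factor; hence $\rL(G)$ is of type ${\rm II}_\infty$. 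If $\Delta(G) = \lambda^\ZZ$ with $\lambda \in (0,1)$, then $\ol{\Delta(G)} = \lambda^\ZZ \cup \{0\}$ contains $0$, so $\rL(G)$ is not of type ${\rm I}$ or ${\rm II}$, and $\rS(\rL(G)) = \lambda^\ZZ \cup \{0\}$ identifies it as type ${\rm III}_\lambda$ by Definition \ref{def:type-III}. Finally, if $\Delta(G)$ is not singly generated it is a nontrivial non-cyclic subgroup of $\RR_{>0}$, hence dense, so $\ol{\Delta(G)} = \RR_{\geq 0}$, forcing type III, and $\rS(\rL(G)) = \RR_{\geq 0}$ gives type ${\rm III}_1$. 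The only genuinely delicate point is checking that $\psi = \vphi(p\,\cdot)$ is a proper KMS-weight on $\Cstarred(G)$ in spite of $p$ not lying in $\Cstarred(G)$: density of $\mathfrak m_\psi$ and norm lower semicontinuity (which descends from lower semicontinuity of $\vphi$ on $\rL(G)$) require a short argument, and the KMS conditions for $\psi$ follow from those for $\vphi$ together with $p$ being a positive element of the centraliser; the rest is bookkeeping with results already in place.
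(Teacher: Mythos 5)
Your proposal is correct and follows essentially the same route as the paper: factoriality via the uniqueness of KMS-weights (Theorem \ref{thm:unique-KMS-state}) applied to $\vphi(z\,\cdot)$ for a central projection $z$, then $\rS(\rL(G)) = \ol{\Delta(G)}$ via factoriality of $\rL(G_0)$ and Theorem \ref{thm:S-invariant-group-von-Neumann-algebra}, and the same weak-versus-strong convergence argument with a hyperbolic element to rule out type ${\rm I}_\infty$ in the unimodular non-discrete case. The only (harmless) divergence is cosmetic: you derive the contradiction by showing $z = c\cdot 1$ in the GNS representation, where the paper instead notes that $\psi(1-z)$ would have to be simultaneously zero and nonzero.
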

\begin{proof}
  Let $\vphi$ be a Plancherel weight on $\rL(G)$.  We first show that $\rL(G)$ is a factor.  Assume that this is not the case.  Then there is a central projection $z \in \rL(G) \setminus \{0,1\}$.  Since $z$ is central, $\psi := \vphi(z \cdot)$ is a weight with the same modular automorphism group as $\vphi$.  Hence $\psi$ restricts to a KMS-weight on $\Cstarred(G)$.  By Theorem \ref{thm:unique-KMS-state} there is a scalar $c \in \RR_{> 0}$ such that $\psi|_{\Cstarred(G)}  = c \cdot \vphi|_{\Cstarred(G)}$.   Since $\vphi$ is faithful, we have $\psi(1-z) \neq 0$, which contradicts the definition of $\psi$.  We have shown that $\rL(G)$ is a factor.

  We next show that $\rL(G_0)$ is a factor.  Let $K \leq G$ be a compact open subgroup such that $\cN_G(K)/K$ contains a element of infinite order.  Since $K$ is compact and open, $\cN_G(K) \leq G_0$.  So $G_0$ satisfies condition (*) and the first part of the proof implies that $\rL(G_0)$ is a factor.  We may hence apply Theorem \ref{thm:S-invariant-group-von-Neumann-algebra}, implying that $\rS(\rL(G)) = \ol{\Delta(G)}$.

  According to Section \ref{sec:type}, it only remains to determine the type of $\rL(G)$ in case $G$ is unimodular.  If $G$ is discrete, then $\rL(G)$ is a type ${\rm II}_1$ factor.  If $G$ is unimodular but not discrete, then $\rL(G)$ is a factor with a faithful properly infinite trace.  Hence $\rL(G)$ is of type ${\rm I}_\infty$ or type ${\rm II}_\infty$.  Let $K \leq G$ be a compact open subgroup.  We show that the finite factor $p_K \rL(G) p_K$ is not of type ${\rm I}$.  Then it is of type ${\rm II}_1$ and hence $\rL(G)$ is of type ${\rm II}_\infty$.  By Propositions \ref{prop:characterisation-fixed-point-with-open-stabiliser} and \ref{prop:characterisation-both-fixed-points-with-open-stabiliser}  there is a hyperbolic element $g \in G$ such that $[K:K \cap g^n K g^{-n}]$ is bounded for  $n \in \ZZ$.  Since $g$ is hyperbolic, $g^n \ra \infty$ in $G$.  So for all $\xi, \eta \in \Ltwo(G)$ we have
  \begin{equation*}
    \langle p_K u_g^n p_K \xi, \eta \rangle
    =
    \langle u_g^n p_K \xi, p_K \eta \rangle 
    \ra
    0
    \eqstop
  \end{equation*}
  So $p_K u_g^n p_K \ra 0$ weakly.  At the same time, Lemma \ref{lem:invertible-average} says that $p_K u_g^{-n} p_K u_g^n p_K \geq {[K : K \cap g^n K g^{-n}]^{-2} p_K}$ is bounded from below and cannot converge to $0$.  Put differently, the sequence $(p_K u_g^n p_K)_n$ does not converge to $0$ in the strong topology.  Since on finite type ${\rm I}$ factors the strong and the weak topology coincide, we conclude that $p_K \rL(G) p_K$ must be of type ${\rm II}_1$.
\end{proof}

\section{Non-amenability}
\label{sec:non-amenability}

The following theorem gives a non-amenability criterion for a group von Neumann algebra of a locally compact group.  It is based on the fact that $\rL(G)$ is amenable if and only if $G$ is amenable, as long as $G$ is supposed to be discrete.
\begin{proposition}
  \label{prop:non-amenability-general}
  Let $G$ be a locally compact group containing some compact open subgroup $K \leq G$ such that $\cN_G(K)$ is not amenable.  Then $\rL(G)$ is not amenable.
\end{proposition}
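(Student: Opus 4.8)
The plan is to reduce the statement to the well‑known discrete case. Write $H := \cN_G(K)$. Since $K$ is open, $H$ is an open subgroup of $G$, and by definition of the normaliser $K$ is a compact open normal subgroup of $H$; hence $\Gamma := H/K$ is a discrete group. Because $K$ is compact, hence amenable, the group $H$ is amenable if and only if $\Gamma$ is, so the hypothesis says precisely that $\Gamma$ is a non‑amenable discrete group. The goal is then to recognise $\rL(\Gamma)$ inside $\rL(G)$ in a way that is compatible with amenability.

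The structural input I would establish is twofold. Let $p_K \in \contc(H) \subset \Cstarred(H)$ be the averaging projection over $K$. By Lemma~\ref{lem:conjugation-of-projections} and normality of $K$ one has $u_g p_K u_g^* = p_{gKg^{-1}} = p_K$ for every $g \in H$, so $p_K$ commutes with $\lambda_H(H)$ and therefore lies in the centre of $\rL(H) = \lambda_H(H)''$; in particular $p_K \rL(H) p_K = p_K \rL(H)$ is a corner, indeed a direct summand, of $\rL(H)$. Since $K$ is normal, all the indices $[K : K \cap gKg^{-1}]$ are equal to $1$, so the isomorphism $p_K \Cstarred(H) p_K \cong \Cstarred(H,K)$ recalled in Section~\ref{sec:hecke-cstar-algebras} specialises to $p_K \Cstarred(H) p_K \cong \Cstarred(\Gamma)$; passing to weak closures gives $p_K \rL(H) p_K \cong \rL(\Gamma)$. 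Secondly, since $H$ is open, the orthogonal projection $\Ltwo(G) \to \Ltwo(H)$ implements by compression a normal conditional expectation $E : \rL(G) \to \rL(H)$; this is seen on the weakly dense subalgebra $\lambda_G(\contc(G))$, where the compression of $\lambda_G(f)$ is $\lambda_H(f|_H)$, so that $E$ is idempotent onto $\rL(H)$.

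Granting these two facts, the conclusion follows from the standard permanence properties of injectivity. Recall that a von Neumann algebra is amenable if and only if it is injective. Suppose $\rL(G)$ were amenable, hence injective. Then $\rL(H) = E(\rL(G))$, being the range of a normal conditional expectation on an injective algebra, is injective; hence so is its corner $p_K \rL(H) p_K \cong \rL(\Gamma)$; hence $\Gamma$ is amenable, by the classical fact that a discrete group with injective group von Neumann algebra is amenable. This contradicts the first paragraph, so $\rL(G)$ is not amenable. The parts that require genuine care are exactly the two structural claims of the middle paragraph — the identification $p_K \rL(H) p_K \cong \rL(\Gamma)$ when $K$ is normal, and the construction of the conditional expectation $E$ for an open subgroup; once these are in place, the remainder is bookkeeping with the known stability of injectivity under passing to corners and to ranges of conditional expectations.
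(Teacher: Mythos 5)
Your argument is correct and is essentially the paper's own proof: both pass to the open subgroup $H = \cN_G(K)$ via a normal conditional expectation $\rL(G) \ra \rL(H)$, identify the corner $p_K \rL(H) p_K$ with $\rL(H/K)$ using normality of $K$ in $H$, and conclude by non-amenability of the discrete quotient $H/K$. You simply spell out the intermediate steps (centrality of $p_K$, the Hecke-algebra identification, and the injectivity permanence properties) that the paper leaves implicit.
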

\begin{proof}
   Take $K \leq G$ as in the statement of the proposition.  Put $H := \cN_G(K)$.  Since $H \leq G$ is open, there is a normal conditional expectation $\rL(G) \ra \rL(H)$.  So it suffices to prove that $\rL(H)$ is non-amenable.  Since $K \unlhd H$, we have $p_K \rL(H) p_K \cong \rL(H/K)$, which is a non-amenable von Neumann algebra.  It follows that also $\rL(H)$ is non-amenable, which finishes the proof.
\end{proof}

We can apply our non-amenability criterion to groups acting on trees.
\begin{theorem}
  \label{thm:non-amenability}
  Let $G$ be a group satisfying condition ($*$) and assume that some compact open subgroup of $G$ is topologically finitely generated.  Then $\rL(G)$ is not amenable.
\end{theorem}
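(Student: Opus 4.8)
The plan is to show that the (finite corner of the) group von Neumann algebra is a \emph{full} factor and to invoke the fact that an injective ${\rm II}_1$ factor is the hyperfinite factor $R$, which has property Gamma; the non-trivial averaging input for fullness will be exactly the Powers averaging machinery of Sections \ref{sec:locally-compact-powers-groups} and \ref{sec:fullness-of-pK}. First I would reduce to the unimodular case: $G_0 \leq G$ is open (it contains every compact open subgroup of $G$), it satisfies condition ($*$) — as noted in the proof of Theorem \ref{thm:factoriality} — and all of its compact open subgroups are topologically finitely generated by Proposition \ref{prop:characterisation-top-finitely-generated}. Since $G_0$ is open there is a normal conditional expectation $\rL(G) \to \rL(G_0)$, and injectivity passes to the range of such an expectation, so it suffices to treat $G = G_0$. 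By Theorem \ref{thm:factoriality}, $\rL(G)$ is then a factor of type ${\rm II}_1$ or ${\rm II}_\infty$; fixing a compact open subgroup $K \leq G$, the corner $M := p_K \rL(G) p_K$ is a ${\rm II}_1$ factor with the trace $\tau$ coming from the Plancherel weight, and since injectivity passes to corners it is enough to prove that $M$ is not injective, i.e.\ that $M \not\cong R$.

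The core step is to show $M$ has no non-trivial centralising sequence. Let $(w_n)$ be a bounded sequence in $M$ with $\tau(w_n)=0$ and $\|[w_n,y]\|_2 \to 0$ for all $y \in M$; I would show $\|w_n\|_2 \to 0$. Approximating $w_n$ in $\|\cdot\|_2$ by an element of $p_K\contc(G)p_K$ and splitting the defining function into its restriction to the clopen set $K$ (which yields a scalar multiple of $p_K$, the scalar being $\tau$-small since $\tau(w_n)=0$) and its restriction to a compact subset $F_n$ of $G\setminus K$, one may replace $w_n$, up to an $\ell^2$-error tending to $0$, by $p_K x_n p_K$ with $x_n\in\contc(G)$ supported in $F_n$. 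By Proposition \ref{prop:lc-powers-group} \ref{it:lc-powers-group:F-fixed} there is a control $r$, independent of $n$, so that $G$ has the Powers property with control $r$ with respect to $K$ and $F_n$; hence by Proposition \ref{prop:powers-averaging}, given $\veps>0$ there are $g_1,\dots,g_N\in G_0$ with $[K:K\cap g_iKg_i^{-1}]\le r$ and $\|\tfrac1N\sum_i u_{g_i}(x_n p_K)u_{g_i}^*\|<\veps$. Using that $(w_n)$ commutes asymptotically with each $p_Ku_{g_i}p_K\in M$, and that $b_n := \tfrac1N\sum_i p_Ku_{g_i}p_Ku_{g_i}^*p_K \ge r^{-2}p_K$ is uniformly invertible in $M$ by Lemma \ref{lem:invertible-average} (since $g_i\in G_0$ forces $[K:K\cap g_i^{-1}Kg_i]\le r$ as well), one obtains $\|b_n w_n\|_2 \le \|\tfrac1N\sum_i (p_Ku_{g_i}p_K)\,w_n\,(p_Ku_{g_i}p_K)^*\|_2 + o(1) = \|\tfrac1N\sum_i p_Ku_{g_i}(x_n p_K)u_{g_i}^*p_K\|_2 + o(1) < \veps + o(1)$, whence $\|w_n\|_2 \le r^2\veps + o(1)$. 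Letting $\veps\to 0$ gives $\|w_n\|_2\to 0$, so $M$ is a full ${\rm II}_1$ factor, hence $M \not\cong R$, hence $M$ is not injective, hence $\rL(G)$ is not injective, i.e.\ not amenable.

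The main obstacle is the very last estimate: the Powers-averaging elements $g_1,\dots,g_N$ depend on $F_n$, hence on $n$, while a centralising sequence commutes asymptotically only with each \emph{fixed} element of $M$, not uniformly over families of operators; making the $o(1)$-terms (in particular $\max_i\|[w_n,p_Ku_{g_i}p_K]\|_2$) genuinely vanish requires organising the construction in the proof of Proposition \ref{prop:lc-powers-group} so that, for a given $\veps$, the $g_i$ are drawn from a fixed finite family — which is plausible since there they are built from a fixed sequence of pairwise transverse hyperbolic elements and a fixed hyperbolic element — and then passing to a subsequence of $(w_n)$ along which the finitely many relevant commutators are controlled. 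This is the same bookkeeping that underpins the proof of Theorem \ref{thm:cstar-simplicity} and the fullness of $p_K$ in Proposition \ref{prop:pK-full-projection}, but carried out in $\|\cdot\|_2$ rather than in operator norm. (Alternatively, if one can exhibit a compact open subgroup $K'$ with $\cN_G(K')$ non-amenable, the statement follows immediately from Proposition \ref{prop:non-amenability-general}; however, for the closed subgroups of $\Aut(T)$ treated here such a $K'$ is not produced directly by condition ($*$), which is why the fullness route seems necessary.)
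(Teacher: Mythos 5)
Your parenthetical ``alternative'' is in fact the paper's proof, and your reason for dismissing it is mistaken. The hypothesis you under-use --- that some compact open subgroup is topologically finitely generated --- is exactly what produces a compact open subgroup with non-amenable normaliser: by Lemma \ref{lem:many-hyperbolic-elements} there are pairwise transverse hyperbolic elements $g_i \in G_0$ with $[K : K \cap g_i^l K g_i^{-l}] \leq r$ uniformly in $i$ and $l$, so each $K_i := \bigcap_{l \in \ZZ} g_i^l K g_i^{-l}$ is, by Lemma \ref{lem:control-number-subgroups}, an intersection of finitely many open subgroups of $K$, hence open with $[K:K_i]$ bounded independently of $i$; by pigeonhole $K_i = K_j$ for some $i \neq j$, and $L := K_i$ is normalised by the two transverse hyperbolic elements $g_i, g_j$. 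Ping-pong makes $\langle g_i, g_j, L \rangle \leq \cN_G(L)$ an open non-amenable subgroup, and Proposition \ref{prop:non-amenability-general} concludes. This is short, uses only that amenability of $\rL(\Gamma)$ and of $\Gamma$ coincide for discrete $\Gamma$, and avoids Connes' theorem entirely.

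Your main route, through fullness of the ${\rm II}_1$ corner, has a genuine gap which you name but do not close. The Powers elements $g_1^{(n)}, \dotsc, g_N^{(n)}$ supplied by Proposition \ref{prop:lc-powers-group}~\ref{it:lc-powers-group:F-fixed} for $F_n = \supp x_n \setminus K$ really do depend on $F_n$: the $K$-invariant open set $\cO$ must satisfy $f\cO \cap \cO = \emptyset$ for every $f \in F_n$ and therefore shrinks as $F_n$ grows, and both the conjugating power $g^m$ and the final exponents $l_i$ in that proof are chosen relative to $\cO$. Only the control constant $r$ and the cardinality $N$ are independent of $F_n$, not the elements themselves. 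A central sequence gives $\|[w_n, y]\|_2 \to 0$ for each fixed $y$ (hence uniformly over a fixed finite set), but not along a family $y_n = p_K u_{g_i^{(n)}} p_K$ that changes with $n$; in the hyperfinite factor one easily builds central sequences $(w_n)$ and bounded $(y_n)$ with $\|[w_n,y_n]\|_2$ bounded below, so this is not a formality. Passing to a subsequence of $(w_n)$ does not help, since every $n$ brings a fresh finite family, and nothing in Proposition \ref{prop:lc-powers-group} yields a single finite set of averaging elements valid for all compact $F \subset G \setminus K$ at once. Until that uniformity is established, the estimate $\|b_n w_n\|_2 \leq \veps + o(1)$ is unjustified and the fullness claim is unproved. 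The surrounding reductions (to $G_0$, to the corner, injectivity passing through normal conditional expectations and to corners, full $\Rightarrow$ no property Gamma $\Rightarrow$ not injective) are correct but also make the argument rest on Connes' theorem, which the paper's elementary argument does not need.
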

\begin{proof}
  We show that there is a compact open subgroup $L \leq G$ and transverse hyperbolic elements $g, h \in \cN_G(L)$.  Fix $K \leq G$ a compact open subgroup.  By Lemma \ref{lem:many-hyperbolic-elements} there are pairwise transverse hyperbolic elements $(g_i)_{i \geq 1}$ and $r \in \NN$ such that $[K : K \cap g_i^l K g_i^{-l}] \leq r$ for all $i \geq 1$ and $l \in \ZZ$.  For fixed $i$ the subgroup $K_i := \bigcap_{l \in \ZZ} g_i^l K g_i^{-l} \leq K$ is normalised by $g_i$.  Proposition \ref{prop:characterisation-top-finitely-generated} implies that $K$ is topologically finitely generated.  So by Lemma~\ref{lem:control-number-subgroups}, there are only finitely many closed subgroups of index less or equal to $r$ in $K$.  Hence $K_i$ is an intersection of finitely many open subgroups of $K$.  It is hence open in $K$ and $[K:K_i]$ is bounded by a constant only depending on $r$ and $K$.  We hence find different indices $i,j \geq 1$ such that $K_i = K_j$.  Put $L := K_i$, $g := g_i$ and $h := g_j$.

  Since $g,h$ are transverse hyperbolic elements the ping-pong lemma implies that $\langle g^n,h^n \rangle \cong \freegrp{2}$ for some $n \in \NN$.  Moreover, we may assume that each element of $\langle g^n, h^n \rangle$ is hyperbolic.  This implies that $\langle g^n, h^n \rangle \cap K = \{e\}$.  So $H := \langle g, h , L \rangle$ is an open non-amenable subgroup of $G$ and $K \unlhd H$ is normal.  We can hence apply Proposition \ref{prop:non-amenability-general}.  This finishes the proof. 
\end{proof}

\section{Schlichting completions of Baumslag-Solitar groups}
\label{sec:BS-groups}

In this section we show that Schlichting completions $\rG(m,n)$ of non-amenable Baumslag-Solitar groups satisfy condition ($*$).  They are hence the first examples of non-discrete \Cstar-simple groups.  We further calculate the type of the factors $\rL(\rG(m,n))$.  In unpublished work with C.Ciobotaru, we obtained factoriality of $\rL(\rG(m,n))$ and could calculate its type by different methods.

It is possible to give a criterion for graphs of groups with finite index inclusions of edge groups into vertex group, so as to make sure that the Schlichting completion of its fundamental group satisfies condition ($*$).  This method gives rise to further examples to which our main result Theorem \ref{thm:cstar-simplicity} applies.

Let $2 \leq |m| \leq n$ be natural numbers.  Then the Baumslag-Solitar group
\begin{equation*}
  \mathrm{BS}(m,n) := \langle a, t \amid t a^m t^{-1} = a^n \rangle
\end{equation*}
contains the commensurated subgroup $\langle a \rangle$.  Denote by
\begin{equation*}
  \rG(m,n) = \mathrm{BS}(m,n) \sslash \langle a \rangle \geq \rK(m,n) = \langle a \rangle
\end{equation*}
the Schlichting completion  of the pair $\mathrm{BS}(m,n) \geq \langle a \rangle$.  Recall that $\rG(m,n) \leq \mathrm{Sym}(\mathrm{BS}(m,n)/\langle a \rangle)$ is a closed subgroup.  By Bass-Serre theory, $\mathrm{BS}(m,n)/ \langle a \rangle$ has the structure of a $m+ n$-regular tree $T$, were $g t \langle a \rangle$ and $g \langle a \rangle$ are connected by an edge for all $g \in \mathrm{BS}(m,n)$.  It follows that $\rG(m,n) \leq \Aut(T) \leq \mathrm{Sym}(\mathrm{BS}(m,n)/\langle a \rangle)$.

The next lemma describes the image of the modular function of $\rG(m,n)$.
\begin{lemma}
  \label{lem:image-modular-homomorphism}
  Let $m,n \in \ZZ^\times$.  Then the image of the modular function of $\rG(m,n)$ is $\left | \frac{m}{n} \right |^\ZZ$.
\end{lemma}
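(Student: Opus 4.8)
The plan is to exploit that the modular function $\Delta$ of a totally disconnected group is a continuous homomorphism into $\RR_{>0}$ which is entirely controlled by a single compact open subgroup, together with the fact that $\iota(\mathrm{BS}(m,n))$ is dense in $\rG(m,n)$. By the formula for $\Delta$ recalled in Section~\ref{sec:totoally-disconnected-groups}, applied to the compact open subgroup $K = \rK(m,n)$, together with the standard identification (built into the Schlichting completion, see Sections~\ref{sec:schlichting-completions} and~\ref{sec:hecke-cstar-algebras} and \cite{tzanev03}) of the index $[K : K \cap \iota(g)K\iota(g)^{-1}]$ with the Hecke index $[\langle a\rangle : \langle a\rangle \cap g\langle a\rangle g^{-1}]$, one has for every $g \in \mathrm{BS}(m,n)$
\[
  \Delta(\iota(g)) = \frac{[K : K \cap \iota(g)K\iota(g)^{-1}]}{[K : K \cap \iota(g)^{-1}K\iota(g)]} = \frac{[\langle a\rangle : \langle a\rangle \cap g\langle a\rangle g^{-1}]}{[\langle a\rangle : \langle a\rangle \cap g^{-1}\langle a\rangle g]} = \frac{\rR(g)}{\rL(g)}.
\]
Since $\Delta$ is continuous and $\iota(\mathrm{BS}(m,n))$ is dense, $\Delta(\iota(\mathrm{BS}(m,n))) \subseteq \Delta(\rG(m,n)) \subseteq \overline{\Delta(\iota(\mathrm{BS}(m,n)))}$; as $\mathrm{BS}(m,n)$ is generated by $a$ and $t$, it therefore suffices to compute $\Delta$ on $\iota(a)$ and $\iota(t)$ and to check that the subgroup of $\RR_{>0}$ they generate is closed.

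First, $\iota(a)$ lies in the compact open subgroup $\rK(m,n)$, so $\Delta(\iota(a)) = 1$. For $\iota(t)$ I would invoke Britton's lemma for the HNN-presentation $\mathrm{BS}(m,n) = \langle a\rangle *_{\langle a^m\rangle \cong \langle a^n\rangle}$ with stable letter $t$ implementing $a^m \mapsto a^n$: for $k \in \ZZ$ one has $t^{-1} a^k t \in \langle a\rangle$ if and only if $a^k \in \langle a^n\rangle$, and $t a^k t^{-1} \in \langle a\rangle$ if and only if $a^k \in \langle a^m\rangle$. Hence $\langle a\rangle \cap t\langle a\rangle t^{-1} = \langle a^n\rangle$ and $\langle a\rangle \cap t^{-1}\langle a\rangle t = \langle a^m\rangle$, so $\rR(t) = |n|$ and $\rL(t) = |m|$, giving $\Delta(\iota(t)) = \left|\frac{n}{m}\right|$.

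Combining the two computations, $\Delta(\iota(\mathrm{BS}(m,n)))$ is the subgroup of $\RR_{>0}$ generated by $\left|\frac{n}{m}\right|$, which is exactly $\left|\frac{m}{n}\right|^{\ZZ}$. This subgroup is trivial when $|m| = |n|$ and infinite cyclic otherwise; in either case it is discrete, hence closed, in $\RR_{>0}$. By continuity of $\Delta$ and density of $\iota(\mathrm{BS}(m,n))$ in $\rG(m,n)$ it follows that $\Delta(\rG(m,n)) \subseteq \overline{\Delta(\iota(\mathrm{BS}(m,n)))} = \left|\frac{m}{n}\right|^{\ZZ}$, and the reverse inclusion is immediate, which proves the lemma. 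The only step that is not pure bookkeeping about Schlichting completions and continuity of the modular function is the Britton's lemma computation of the intersections $\langle a\rangle \cap t^{\pm 1}\langle a\rangle t^{\mp 1}$, so I expect that (essentially routine) normal-form argument for Baumslag-Solitar groups to be the main obstacle.
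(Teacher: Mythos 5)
Your proof is correct and follows essentially the same route as the paper: both arguments use density of $\iota(\mathrm{BS}(m,n))$ in $\rG(m,n)$ together with continuity of $\Delta$ and discreteness of the target subgroup to reduce to computing $\Delta$ on the generators, where $\Delta(\iota(a))=1$ and $\Delta(\iota(t))$ is read off from the conjugation relation between $\langle a^m\rangle$ and $\langle a^n\rangle$ (your Britton's-lemma computation of the Hecke indices is just a more explicit version of the paper's observation that $t^{-1}\ol{\langle a^n\rangle}t=\ol{\langle a^m\rangle}$). Your value $\Delta(\iota(t))=\left|\frac{n}{m}\right|$ is in fact the one consistent with the paper's normalisation $\mu(Ag)=\Delta(g)\mu(A)$, and the discrepancy with the paper's stated $\Delta(t)=\left|\frac{m}{n}\right|$ is immaterial since both generate the same subgroup $\left|\frac{m}{n}\right|^{\ZZ}$ of $\RR_{>0}$.
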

\begin{proof}
  If $|m| = |n|$, then $\rG(m,n)$ is discrete and hence it is unimodular.  We may hence assume that $1 \leq |m| < n$.  Then $\mathrm{BS}(m,n) \subset \rG(m,n)$ is a dense subgroup.  Since $\left ( \frac{m}{n} \right )^\ZZ \subset \RR_{> 0}$ is discrete it suffices to show that $\Delta(\mathrm{BS}(m,n)) = \left ( \frac{m}{n} \right )^\ZZ$.  Since $a \in \cN_{\rG(m,n)}(\rK(m,n))$, it follows that $\Delta(a) = 1$.  Further $t^{-1} \ol{\langle a^n \rangle} t = \ol{\langle a^m \rangle}$ showing that $\Delta(t) = \left | \frac{m}{n} \right |$.  Since $\mathrm{BS}(m,n)$ is generated by $a,t$, this finishes the proof of the lemma.
\end{proof}

\begin{theorem}
  \label{thm:BS-groups}
  Let $2 \leq |m| \leq n$ and consider the relative profinite completion $\rG(m,n)$ of the Baumslag-Solitar group $\mathrm{BS}(m,n)$.  Then the following statements are true.
  \begin{itemize}
  \item $\rL(\rG(m,n))$ is a non-amenable factor.
  \item If $|m| = n$, then $\rG(m,n)$ is discrete and $\rL(G(m,n))$ is of type ${\rm II}_1$.
  \item If $|m| \neq n$, then $\rL(G(m,n))$ is of type ${\rm III}_{\left | \frac{m}{n} \right |}$.
  \item $\Cstarred(\rG(m,n))$ is simple.
  \end{itemize}

\end{theorem}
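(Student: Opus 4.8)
The plan is to reduce everything to two structural facts about $G=\rG(m,n)$: that it satisfies condition ($*$), and that it has a topologically finitely generated compact open subgroup. Granting these, the four assertions fall out of the machinery already built: simplicity of $\Cstarred(\rG(m,n))$ is exactly Theorem \ref{thm:cstar-simplicity}; Theorem \ref{thm:factoriality} gives that $\rL(\rG(m,n))$ is a factor; Theorem \ref{thm:non-amenability} gives its non-amenability; and the type is read off from $\ol{\Delta(\rG(m,n))}$, which Lemma \ref{lem:image-modular-homomorphism} identifies as $\ol{\left|\frac{m}{n}\right|^\ZZ}$. When $|m|=n$ the group $\rG(m,n)$ is discrete (so $\Delta\equiv 1$) and Theorem \ref{thm:factoriality} yields type $\mathrm{II}_1$; when $|m|<n$ we have $\left|\frac{m}{n}\right|\in(0,1)$, so $\Delta(\rG(m,n))$ is singly generated and Theorem \ref{thm:factoriality} yields type $\mathrm{III}_{\left|\frac{m}{n}\right|}$.

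\textbf{The routine part of condition ($*$).} Recall from the discussion preceding the theorem that $\rG(m,n)$ sits as a closed subgroup of $\Aut(T)$, where $T$ is the Bass--Serre tree of the HNN extension $\mathrm{BS}(m,n)$, a $(|m|+|n|)$-regular tree; since $|m|,|n|\geq 2$, this tree is thick. As $\langle a^m\rangle$ and $\langle a^n\rangle$ are proper subgroups of the vertex group $\langle a\rangle$, the action of $\mathrm{BS}(m,n)$ — hence of its closure $\rG(m,n)$ — on $T$ has no proper invariant subtree; as $\rG(m,n)$ is non-compact, Proposition \ref{prop:equivalence-minimal-action} shows that $\rG(m,n)\grpaction{}\partial T$ is minimal. (Since $\partial T$ is a Cantor space, $\rG(m,n)$ then fixes no point of $V(T)\cup E(T)\cup\partial T$, so it is non-amenable by Proposition \ref{prop:adams-ballmann}, and faithfulness together with minimality on $T$ rules out non-trivial compact normal subgroups; this is precisely the passage used in Theorem \ref{thm:characterisation-condition-star}.) Finally, the natural compact open subgroup $\rK(m,n)=\ol{\langle a\rangle}$ is topologically cyclic, hence topologically finitely generated, so by Proposition \ref{prop:characterisation-top-finitely-generated} every compact open subgroup is, supplying the standing hypothesis of Theorems \ref{thm:factoriality} and \ref{thm:non-amenability}.

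\textbf{The crux.} What remains — and this is the main obstacle — is to exhibit a point $x\in\partial T$ with $\rG(m,n)_x$ open such that $\rG(m,n)_x\cap\rG(m,n)_0$ contains a hyperbolic element; equivalently, by Propositions \ref{prop:characterisation-fixed-point-with-open-stabiliser} and \ref{prop:characterisation-both-fixed-points-with-open-stabiliser}, a hyperbolic element of $\rG(m,n)$ normalising a compact open subgroup $K$ (which simultaneously verifies the first formulation of condition ($*$), an infinite-order element in $\cN_{\rG(m,n)}(K)/K$). If $|m|=n$, then $\rG(m,n)$ is discrete by Lemma \ref{lem:image-modular-homomorphism}, so $\rG(m,n)_0=\rG(m,n)$ and all stabilisers are open; the stable letter $t$, which is hyperbolic on $T$, does the job. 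If $|m|<n$, then $\Delta(t)=\left|\frac{m}{n}\right|\neq 1$, so $t\notin\rG(m,n)_0$, and I would instead produce a hyperbolic element of $\mathrm{BS}(m,n)$ of total $t$-exponent zero (a commutator such as $[t,a]$ is a natural candidate) and then verify that one of its two fixed ends of $T$ has an open stabiliser, i.e.\ that there is a compact open $K$ with $g^nKg^{-n}\supseteq K$ for all $n\in\NN$ (Proposition \ref{prop:characterisation-fixed-point-with-open-stabiliser}). This last step requires a hands-on analysis of how vertex stabilisers of $\rG(m,n)$ act on $T$, exploiting the defining commensuration $t\langle a^m\rangle t^{-1}=\langle a^n\rangle$ and the density of $\mathrm{BS}(m,n)$ in $\rG(m,n)$; I expect it to be where the real work lies.

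\textbf{Conclusion.} Once condition ($*$) is in hand, the four bullet points are immediate: $\Cstarred(\rG(m,n))$ is simple by Theorem \ref{thm:cstar-simplicity}; $\rL(\rG(m,n))$ is a factor by Theorem \ref{thm:factoriality} and non-amenable by Theorem \ref{thm:non-amenability}; if $|m|=n$ then $\rG(m,n)$ is discrete and $\rL(\rG(m,n))$ is of type $\mathrm{II}_1$; and if $|m|\neq n$ then $\Delta(\rG(m,n))=\left|\frac{m}{n}\right|^\ZZ$ is singly generated by an element of $(0,1)$, so $\rL(\rG(m,n))$ is of type $\mathrm{III}_{\left|\frac{m}{n}\right|}$, again by Theorem \ref{thm:factoriality}.
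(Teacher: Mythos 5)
Your overall architecture coincides with the paper's: verify condition ($*$) and topological finite generation of $\rK(m,n)=\ol{\langle a\rangle}$, then quote Theorems \ref{thm:cstar-simplicity}, \ref{thm:factoriality} and \ref{thm:non-amenability} together with Lemma \ref{lem:image-modular-homomorphism}. The one genuine gap is exactly the step you flag as the crux and leave undone: for $|m|<n$ you must actually exhibit a hyperbolic element of $\rG(m,n)_0$ fixing a boundary point with open stabiliser, and your proposal stops at naming a candidate and predicting that the verification is hard. In fact it is much shorter than you anticipate: no hands-on analysis of vertex stabilisers is needed, only the defining relation $ta^mt^{-1}=a^n$. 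Take $g=atat^{-1}$ (the paper's choice; your candidate $[t,a]=tat^{-1}a^{-1}$ works identically). Then
\begin{equation*}
  g\,\langle a^n\rangle\, g^{-1}
  = a t a\, \bigl(t^{-1}\langle a^n\rangle t\bigr)\, a^{-1} t^{-1} a^{-1}
  = a t \langle a^m\rangle t^{-1} a^{-1}
  = a \langle a^n\rangle a^{-1}
  = \langle a^n\rangle
  \eqcomma
\end{equation*}
so $g$ normalises $\ol{\langle a^n\rangle}$, a compact open subgroup of $\rK(m,n)$. Moreover $g$ is cyclically reduced of $t$-length $2$ (no pinch occurs, since $a\notin\langle a^m\rangle$ and $a\notin\langle a^n\rangle$ when $|m|,n\geq 2$), so by Britton's lemma it is not conjugate into the vertex group $\langle a\rangle$ and is therefore hyperbolic on $T$; and $\Delta(g)=\Delta(t)\Delta(t)^{-1}=1$, so $g\in\rG(m,n)_0$. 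Propositions \ref{prop:characterisation-fixed-point-with-open-stabiliser} and \ref{prop:characterisation-both-fixed-points-with-open-stabiliser} then produce the required $x\in\partial T$ with $\rG(m,n)_x$ open and $g\in\rG(m,n)_x\cap\rG(m,n)_0$ hyperbolic. With this computation inserted, your argument is complete.

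A minor difference of route is worth recording: the paper disposes of $|m|=n$ separately, observing that $\rG(m,n)\cong\ZZ/n\ZZ*\ZZ$ is a discrete icc Powers group in de la Harpe's sense, so the ${\rm II}_1$ factoriality and \Cstar-simplicity follow from classical discrete results; you instead run the discrete case through the same locally compact machinery (every stabiliser is open, $\rG(m,n)_0=\rG(m,n)$, and $t$ itself serves as the hyperbolic element). Both are valid; yours is the more uniform treatment, the paper's the more economical one.
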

\begin{proof}
  First note that $\rG(m,n) \cong \ZZ/n\ZZ * \ZZ$ if $|m| = n$.  So in this case $\rG(m,n)$ is icc and non-amenable, showing that $\rL(\rG(m,n))$ is a non-amenable type ${\rm II}_1$ factor.  Moreover, $\rG(m,n)$ is a Powers group in the sense of de la Harpe \cite{delaharpe85}.  So $\Cstarred(\rG(m,n))$ is simple.

  In case $|m| \neq n$ we have $\mathrm{BS}(m,n) \leq \rG(m,n)$.  Note that $\rG(m,n)$ acts transitively on $T$, so that $\rG(m,n) \grpaction{} \partial T$ is minimal by Proposition \ref{prop:equivalence-minimal-action}.  The element $atat^{-1} \in \rG(m,n)_0$ is hyperbolic and normalises the closure of $\langle a^n \rangle$, which is open in $\rK(m,n)$.  So Proposition \ref{prop:characterisation-fixed-point-with-open-stabiliser} shows that the fixed points of $atat^{-1}$ have an open stabiliser in $G$.  This verifies condition ($*$) for $\rG(m,n)$.  Since $\rK(m,n)$ is topologically singly generated, Theorems \ref{thm:factoriality}, \ref{thm:cstar-simplicity} and \ref{thm:non-amenability} apply.

  Since $\Delta(\rG(m,n)) = \frac{|m|}{n}^\ZZ$ by Lemma \ref{lem:image-modular-homomorphism}, Theorem \ref{thm:factoriality} shows that $\rL(\rG(m,n))$ is a type ${\rm III}_\lambda$  factor for $\lambda = \frac{|m|}{n}$.  By Theorem \ref{thm:non-amenability}, $\rL(\rG(m,n))$ is not amenable.  Theorem \ref{thm:cstar-simplicity} implies that $\Cstarred(\rG(m,n))$ is simple.

\end{proof}

\bibliographystyle{mybibtexstyle}
\bibliography{operatoralgebras}

\vspace{2em}
{\small \parbox[t]{200pt}
  {
    Sven Raum \\
    Westf{\"a}lische Wilhelmsuniversit{\"a}t M{\"u}nster \\
    Fakult{\"a}t Mathematik und Informatik \\
    Einsteinstra{\ss}e 62 \\
    D-48149 M{\"u}nster \\
    Germany \\
    {\footnotesize sven.raum@gmail.com}
  }
}

\end{document}